\renewcommand{\qedsymbol}{$\blacksquare$}
\newtheorem{theorem}{Theorem}[section]
\newtheorem{lemma}[theorem]{Lemma}
\newtheorem{proposition}[theorem]{Proposition}
\newtheorem{corollary}[theorem]{Corollary}
\theoremstyle{definition}
\newtheorem{example}[theorem]{Example}
\theoremstyle{remark}
\numberwithin{equation}{section}
\begin{document}

\title[On the generalized Suzuki curve]{On the Zeta function and the automorphism group of the generalized Suzuki curve}


\author{Herivelto Borges}
\address{Instituto de Ci\^{e}ncias Matem\'{a}ticas e de Computa\c{c}\~{a}o, Universidade de S\~{a}o Paulo, Avenida Trabalhador S\~{a}o-carlense, 400, CEP 13566--590, S\~{a}o Carlos, SP, Brazil}
\curraddr{}
\email{hborges@icmc.usp.br}

\author{Mariana Coutinho}
\address{Instituto de Matem\'{a}tica, Estat\'{i}stica e Computa\c{c}\~{a}o Cient\'{i}fica, Universidade Estadual de Campinas, Rua S\'{e}rgio Buarque de Holanda, 651, CEP 13083--859, Campinas, SP, Brazil}
\curraddr{}
\email{mariananery@alumni.usp.br}

\thanks{The authors would like to thank Felipe Voloch for the discussion regarding the content of Section \ref{Section: Applications, Examples, and Remarks}.}
\thanks{The first author was supported by FAPESP (Brazil), grant 2017/04681--3, and partially funded by the 2019 IMPA Post-doctoral Summer Program.}
\thanks{The second author was financed in part by Coordena\c{c}\~{a}o de Aperfei\c{c}oamento de Pessoal de N\'{i}vel Superior - Brasil (CAPES) - Finance Code 001, CNPq (Brazil), grant 154359/2016--5, and FAPESP (Brazil), grant 2018/23839--0.}

\subjclass[2010]{Primary 11G20, 14G05, 14G10, 14H37}


\date{}

\begin{abstract}
For $p$ an odd prime number, $q_{0}=p^{t}$, and $q=p^{2t-1}$, let $\mathcal{X}_{\mathcal{G}_{\mathcal{S}}}$ be the nonsingular model of 
$$
Y^{q}-Y=X^{q_{0}}(X^{q}-X).
$$
In the present work, the number of $\mathbb{F}_{q^{n}}$-rational points and the full automorphism group of $\mathcal{X}_{\mathcal{G}_{\mathcal{S}}}$ are determined. In addition, the L-polynomial of this curve is provided, and the number of $\mathbb{F}_{q^{n}}$-rational points on the Jacobian $J_{\mathcal{X}_{\mathcal{G}_{\mathcal{S}}}}$ is used to construct \'{e}tale covers of $\mathcal{X}_{\mathcal{G}_{\mathcal{S}}}$, some with many rational points.
\end{abstract}

\maketitle


\section{Introduction}\label{Section: Introduction}

\noindent Algebraic curves over finite fields is a significant research topic, in particular because of its connection with other areas of mathematics. In this context, determining the number of rational points on a curve is a classical, but often challenging, problem. While a general method to compute such numbers is out of reach, effective bounds can be found in the literature. For instance, if $\mathcal{Y}$ is a (projective, nonsingular, geometrically irreducible, algebraic) curve of genus $g$ defined over the finite field $\mathbb{F}_{q}$, then the remarkable Hasse-Weil bound gives
\begin{equation*}
|\text{N}_{q}(\mathcal{Y})-(q+1)|\leqslant 2gq^{1/2},
\end{equation*}
where $q$ is a power of a prime $p$, and $\text{N}_{q}(\mathcal{Y})$ is the number of $\mathbb{F}_{q}$-rational points on $\mathcal{Y}$.

Curves attaining the previous upper (resp. lower) bound are called $\mathbb{F}_{q}$-maximal (resp. $\mathbb{F}_{q}$-minimal). For $p=2$, an important example is the Deligne-Lusztig curve associated with the Suzuki group $\text{Sz}(q)$ \cite{DELIGNE-LUSZTIG}, \cite{HANSEN}, \cite{HANSEN-STICHTENOTH}, here for simplicity called the Suzuki curve, which is the nonsingular model $\mathcal{Y}_{\mathcal{S}}$ of
\begin{equation*}
\mathcal{S}:Y^{q}-Y=X^{q_{0}}(X^{q}-X),
\end{equation*}
where $q_{0}=2^{t}$, $q=2^{2t-1}$, and $t\geqslant 2$\footnote{This is an alternative plane model for the Suzuki curve, which is usually given by $Y^{q}-Y=X^{q_{0}}(X^{q}-X),$ with $q_{0}=p^{t}$ and $q=p^{2t+1}$ (see \cite[Proposition $4.2$]{GS}).}. Indeed in \cite[Proposition $4.3$]{HANSEN}, together with the expression for the Zeta function of $\mathcal{Y}_{\mathcal{S}}$, the explicit formula for the number of rational points on $\mathcal{Y}_{\mathcal{S}}$ shows that it is $\mathbb{F}_{q^{4}}$-maximal.

The Suzuki curve is optimal in the sense that its number of $\mathbb{F}_{q}$-rational points coincides with the maximum number of $\mathbb{F}_{q}$-rational points that a curve of its genus can have \cite[Proposition $2.1$]{HANSEN-STICHTENOTH}. Moreover, in \cite[Theorem $5.1$]{FUHRMANN-TORRES}, it is shown that this curve can be characterized by its genus and number of $\mathbb{F}_{q}$-rational points. 

In addition to its maximality and optimality, the Suzuki curve is known for its large automorphism group. Specifically, it is one of four examples of curves of genus $g\geqslant 2$ which have an automorphism group of order greater than or equal to $8g^{3}$ \cite{HENN}, \cite[Theorem $11.127$]{HKT}. Furthermore, in \cite{BASSAetal} and \cite{GKT}, the investigation of genera and plane models for quotients of the Suzuki curve is addressed. Past studies also examined embeddings in $\mathbb{P}^{N}$ \cite{BR}, \cite{DE}, class field theory \cite{LAUTER}, the invariant $a$-number \cite{RACHELPRIESa-numbers}, and Weierstrass semigroups and coding theory \cite{BMZ}, \cite{CD}, \cite{DP}, \cite{EIDetal}, \cite{GK}, \cite{KP}, \cite{MATTHEWS}. More recently, the construction of a Galois cover of the Suzuki curve in \cite{SKABELUND} raised a number of other issues to be investigated \cite{GIULIETTIetal}, \cite{MTZ}.

Most of the aforementioned applications were based on knowledge of the number of $\mathbb{F}_{q^{n}}$-rational points and the automorphism group of the Suzuki curve. Motivated by this, for $p>2$, $q_{0}=p^{t}$, and $q=p^{m}$, where $m$, $t$ are positive integers satisfying $m=2t-1$, let $\mathcal{G}_{\mathcal{S}}$ be the projective geometrically irreducible plane curve (see Proposition \ref{Proposition: properties of the plane generalized Suzuki curve}) defined over $\mathbb{F}_{q}$ given in affine coordinates by
\begin{equation}
\label{Equation: generalized Suzuki curve}
\mathcal{G}_{\mathcal{S}}:Y^{q}-Y=X^{q_{0}}(X^{q}-X),
\end{equation}
and let $\mathcal{X}_{\mathcal{G}_{\mathcal{S}}}$ be its nonsingular model, called herein the generalized Suzuki curve. 

The objective of this work is twofold. First, the number of $\mathbb{F}_{q^{n}}$-rational points on $\mathcal{X}_{\mathcal{G}_{\mathcal{S}}}$ is investigated for all $n\geqslant 1$\footnote{For $n=1$, the number $\text{N}_{q^{n}}(\mathcal{X}_{\mathcal{G}_{\mathcal{S}}})$ was previously studied in \cite{PS} in connection with geometric Goppa codes. Also, for $p=3$, and considering \cite[Proposition $4.2$]{GS}, the curve $\mathcal{X}_{\mathcal{G}_{\mathcal{S}}}$ is $\mathbb{F}_{q}$-covered by the so-called Ree curve, and then the information on its maximality given by Theorem \ref{Theorem: the number of Fqn rational points on XGS} could be recovered by Serre's result (see Theorem \ref{Theorem: theorem of Serre}).}. In addition, the L-polynomial of $\mathcal{X}_{\mathcal{G}_{\mathcal{S}}}$ is determined. An explicit description of the L-polynomial of a curve $\mathcal{Y}$ is highly desirable since it encodes significant information as the number of rational points on the Jacobian $J_{\mathcal{Y}}$ of $\mathcal{Y}$. Using this fact, some constructions of curves with many rational points are presented in \cite{V}. Moreover, considering the Frobenius endomorphism $\Phi$ on $J_{\mathcal{Y}}$, then the characteristic polynomial of $\Phi$ is described precisely by the reciprocal of the L-polynomial of $\mathcal{Y}$, and from its factorization, the Frobenius linear series on $\mathcal{Y}$ is obtained. For further details, see \cite[Sections $9.7$ and $9.8$]{HKT}.

Second, the full automorphism group of $\mathcal{X}_{\mathcal{G}_{\mathcal{S}}}$ is presented. Additionally, considering the more general curve $\mathcal{X}$ given by the nonsingular model of
\begin{equation*}
Y^{q}-Y=X^{q_{0}}(X^{q}-X),
\end{equation*}
where  $q=p^{m}$ and $q_{0}=p^{t}$, with $m$, $t$ being positive integers satisfying $2t>m\geqslant t$, and $2t-1>m\geqslant t$ if $p=2$, one can easily verify that the proof of Theorem \ref{Theorem: the automorphism group of XGS} below also holds for $\mathcal{X}$. Consequently, the curves $\mathcal{X}$ and $\mathcal{X}_{\mathcal{G}_{\mathcal{S}}}$ have an isomorphic automorphism group when $p>2$. Further, for $p=2$, this extension of Theorem \ref{Theorem: the automorphism group of XGS} answers a question raised by Giulietti and Korchm\'aros, completing their description of the full automorphism group of $\mathcal{X}$ in \cite{GK1}.

Let $p$ be an odd prime. For each integer $k$, define
\begin{equation}
\label{Equation: definition of quadratic character}
\upeta(k)\coloneqq\bigg(\dfrac{k}{p}\bigg)=\left\{\begin{array}{rl}
1,& \text{ if } p\nmid k\text{ and } k \text{ is a square modulo } p\\
0,& \text{ if } p\mid k\\
-1,& \text{ otherwise},
\end{array}\right.
\end{equation}
where $\bigg(\dfrac{\ast}{p}\bigg)$ is the Legendre symbol, and consider
\begin{equation}
\label{Equation: definition of p*1/2}
\tilde{p}^{1/2}\coloneqq\left\{\begin{array}{rl}
p^{1/2},& \text{ if }\upeta(-1)=1\\
\textbf{i}p^{1/2},& \text{ if }\upeta(-1)=-1.
\end{array}\right.
\end{equation}

The main results are the following.

\begin{theorem}
\label{Theorem: the number of Fqn rational points on XGS}
If $g$ denotes the genus of $\mathcal{X}_{\mathcal{G}_{\mathcal{S}}}$, then the number $\emph{N}_{q^{n}}(\mathcal{X}_{\mathcal{G}_{\mathcal{S}}})$ of $\mathbb{F}_{q^{n}}$-rational points on $\mathcal{X}_{\mathcal{G}_{\mathcal{S}}}$ is described as follows.
\begin{enumerate}
\item[\emph{1.}] If $p\mid n$, then
\begin{equation*}
\emph{N}_{q^{n}}(\mathcal{X}_{\mathcal{G}_{\mathcal{S}}})=\left\{\begin{array}{cl}
q^{n}+1,& \text{ if }n\text{ is odd}\\
q^{n}+1-\upeta((-1)^{n/2})\cdot 2gq^{n/2},& \text{ if }n\text{ is even}.
\end{array}\right.
\end{equation*}
\item[\emph{2.}] If $p\nmid n$, then 
\begin{equation*}
\emph{N}_{q^{n}}(\mathcal{X}_{\mathcal{G}_{\mathcal{S}}})=\left\{\begin{array}{cl}
q^{n}+1+\upeta((-1)^{(n-1)/2}n)\cdot 2gq^{n/2}p^{-1/2},& \text{ if }n\text{ is odd}\\
q^{n}+1,& \text{ if }n\text{ is even}.
\end{array}\right.
\end{equation*}
\end{enumerate}
In particular, $\mathcal{X}_{\mathcal{G}_{\mathcal{S}}}$ is $\mathbb{F}_{q^{n}}$-maximal if and only if $p\mid n$, $n\equiv 2\pmod{4}$, and $p\equiv 3\pmod{4}$.
\end{theorem}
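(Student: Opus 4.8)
The plan is to turn $\text{N}_{q^n}(\mathcal{X}_{\mathcal{G}_{\mathcal{S}}})$ into a sum of Gauss sums of $\mathbb{F}_p$-quadratic forms, to evaluate that sum, and then to read off the maximality criterion. Write $f(X)=X^{q_0}(X^q-X)=X^{\,p^t+p^{2t-1}}-X^{\,p^t+1}$. The affine curve $\mathcal{G}_{\mathcal{S}}$ is smooth (the partial derivative in $Y$ of $Y^q-Y-f(X)$ is $-1$) and, by Proposition~\ref{Proposition: properties of the plane generalized Suzuki curve}, has a unique place at infinity, which is $\mathbb{F}_q$-rational; hence $\text{N}_{q^n}(\mathcal{X}_{\mathcal{G}_{\mathcal{S}}})=1+\#\{(x,y)\in\mathbb{F}_{q^n}^2:y^q-y=f(x)\}$. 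Since the additive map $y\mapsto y^q-y$ on $\mathbb{F}_{q^n}$ has kernel $\mathbb{F}_q$ and image $\ker(\text{Tr}_{\mathbb{F}_{q^n}/\mathbb{F}_q})$, the affine count is $q\cdot\#\{x\in\mathbb{F}_{q^n}:\text{Tr}_{\mathbb{F}_{q^n}/\mathbb{F}_q}(f(x))=0\}$; expressing the indicator of $\{\text{Tr}=0\}$ as an average over the additive characters of $\mathbb{F}_q$ and using transitivity of the trace, one gets
\begin{equation*}
\text{N}_{q^n}(\mathcal{X}_{\mathcal{G}_{\mathcal{S}}})=1+q^n+\sum_{a\in\mathbb{F}_q^{\ast}}S_a(n),\qquad S_a(n)=\sum_{x\in\mathbb{F}_{q^n}}\psi\bigl(\text{Tr}_{\mathbb{F}_{q^n}/\mathbb{F}_p}(af(x))\bigr),
\end{equation*}
where $\psi$ is a fixed nontrivial additive character of $\mathbb{F}_p$ (the total being independent of this choice).

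The key structural point is that the two exponents of $f$ are sums of two powers of $p$, so $Q_a\colon x\mapsto\text{Tr}_{\mathbb{F}_{q^n}/\mathbb{F}_p}(af(x))$ is a quadratic form on $\mathbb{F}_{q^n}$ viewed as an $\mathbb{F}_p$-space of dimension $(2t-1)n$, with polar form $B_a(x,y)=\text{Tr}_{\mathbb{F}_{q^n}/\mathbb{F}_p}\bigl(y\,L_a(x)\bigr)$ for an explicit $\mathbb{F}_p$-linearized polynomial $L_a$. Using $a\in\mathbb{F}_q$ one computes $L_a(x)^{q_0}=a(x^q-x)-a^{q_0}(x^q-x)^{p}$, so the radical $W_a=\ker L_a$ consists of the $x$ with $x^q-x=0$ or $(x^q-x)^{p-1}=a^{1-q_0}$; hence $W_a\subseteq\mathbb{F}_{q^{p}}$ and, over $\overline{\mathbb{F}}_p$, $\dim_{\mathbb{F}_p}W_a=2t$ (namely $\mathbb{F}_q$ together with $p-1$ further cosets of $\mathbb{F}_q$). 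Intersecting with $\mathbb{F}_{q^n}$, which meets $\mathbb{F}_{q^{p}}$ in $\mathbb{F}_{q^{\gcd(n,p)}}$, gives $\dim_{\mathbb{F}_p}W_a=2t$ when $p\mid n$ and $\dim_{\mathbb{F}_p}W_a=2t-1$ (with $W_a=\mathbb{F}_q$) when $p\nmid n$, independently of $a$; moreover $Q_a$ vanishes on $W_a$ (when $p\nmid n$ because $f\equiv 0$ on $\mathbb{F}_q$, and when $p\mid n$ because $\text{Tr}_{\mathbb{F}_{q^n}/\mathbb{F}_q}$ vanishes on $W_a$, using $x^{q^k}=x+k(x^q-x)$ for $x\in W_a$ and $p\mid\binom{n}{2}$). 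The standard evaluation of the Gauss sum of such a form then gives
\begin{equation*}
S_a(n)=p^{\dim_{\mathbb{F}_p}W_a}\,\upeta(\Delta_a)\,g^{\,r_a},\qquad r_a=(2t-1)n-\dim_{\mathbb{F}_p}W_a,
\end{equation*}
where $g$ is the quadratic Gauss sum of $\mathbb{F}_p$ (so $g^2=\upeta(-1)p$, and one may take $g=\tilde{p}^{1/2}$) and $\Delta_a\in\mathbb{F}_p^{\ast}/(\mathbb{F}_p^{\ast})^2$ is the discriminant of the nondegenerate part of $Q_a$.

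Since $\dim_{\mathbb{F}_p}W_a$ and $r_a$ do not depend on $a$, one has $\sum_{a\in\mathbb{F}_q^{\ast}}S_a(n)=p^{\dim_{\mathbb{F}_p}W_a}g^{r_a}\sum_{a\in\mathbb{F}_q^{\ast}}\upeta(\Delta_a)$, so everything reduces to the sign sum $\sum_{a}\upeta(\Delta_a)$. A short check of parities shows that $r_a$ is odd precisely when $n$ is odd with $p\mid n$, or $n$ is even with $p\nmid n$; in those cases, partitioning $\mathbb{F}_q^{\ast}$ into $\mathbb{F}_p^{\ast}$-orbits and using that $Q_{\lambda a}=\lambda Q_a$ scales the discriminant by $\lambda^{r_a}$ (so $\upeta(\Delta_{\lambda a})=\upeta(\lambda)^{r_a}\upeta(\Delta_a)$), together with $\sum_{\lambda\in\mathbb{F}_p^{\ast}}\upeta(\lambda)^{r_a}=0$, gives $\sum_{a}\upeta(\Delta_a)=0$, hence $\text{N}_{q^n}=q^n+1$ — which is exactly the asserted value in those two cases. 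In the remaining two cases $r_a$ is even, and here lies the main obstacle: one must actually compute the square class $\Delta_a$ (e.g. by diagonalizing $Q_a$, equivalently by analysing the $\mathbb{F}_q$-linearized polynomial $L_a$) and show that $\upeta(\Delta_a)$ is the same for every $a\in\mathbb{F}_q^{\ast}$, with a value which, tracked through $\upeta(-1)$ (i.e. $p\bmod 4$), $t$ and $n$, reproduces exactly the factors $\upeta((-1)^{n/2})$ and $\upeta((-1)^{(n-1)/2}n)$ in the statement. Feeding this, together with $g^2=\upeta(-1)p$, $p^{(2t-1)n}=q^n$, and $2g=q_0(q-1)$ (the genus being $q_0(q-1)/2$, cf. Proposition~\ref{Proposition: properties of the plane generalized Suzuki curve}), into the display for $\text{N}_{q^n}$, yields the remaining two formulas.

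Finally, the maximality assertion is immediate from parts 1 and 2. For $\mathcal{X}_{\mathcal{G}_{\mathcal{S}}}$ to be $\mathbb{F}_{q^n}$-maximal, the Hasse--Weil value $q^n+1+2g\,q^{n/2}$ must be an integer that is attained; since $q=p^{2t-1}$ is not a perfect square, this forces $n$ to be even. If $p\nmid n$ (and $n$ even), part 2 gives $\text{N}_{q^n}=q^n+1<q^n+1+2g\,q^{n/2}$, so there is no maximality. If $p\mid n$ (and $n$ even), part 1 shows maximality holds if and only if $\upeta((-1)^{n/2})=-1$; as $(-1)^{n/2}=1$ when $4\mid n$ and $(-1)^{n/2}=-1$ when $n\equiv2\pmod4$, while $\upeta(-1)=-1$ exactly when $p\equiv3\pmod4$, this occurs precisely when $n\equiv2\pmod4$ and $p\equiv3\pmod4$.
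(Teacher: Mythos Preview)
Your character-sum setup is correct, and the two ``vanishing'' cases ($p\mid n$ with $n$ odd, and $p\nmid n$ with $n$ even) are handled cleanly by the $\mathbb{F}_p^{\ast}$-orbit argument on the discriminant. The radical computation is right: your identity $L_a(x)^{q_0}=a(x^q-x)-a^{q_0}(x^q-x)^p$ shows that the relevant values $u=x^q-x$ all lie in $\mathbb{F}_q$, whence $x^q-x=u$ is solvable in $\mathbb{F}_{q^n}$ iff $\text{Tr}_{\mathbb{F}_{q^n}/\mathbb{F}_q}(u)=nu=0$, recovering $\dim_{\mathbb{F}_p}W_a\in\{2t-1,2t\}$ according as $p\nmid n$ or $p\mid n$.

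However, you explicitly leave the sign undetermined in the two remaining cases, and this is a genuine gap rather than a detail. Computing the square class $\upeta(\Delta_a)$ of a high-rank quadratic form directly (by diagonalization or otherwise) is exactly the hard part of the problem, and nothing you have written indicates how to do it; saying that it ``reproduces exactly the factors $\upeta((-1)^{n/2})$ and $\upeta((-1)^{(n-1)/2}n)$'' is the statement to be proved, not an argument. The paper avoids this computation entirely. It first uses the elementary abelian $p$-extension structure to reduce to a single Artin--Schreier curve $\tilde{\mathcal{X}}_{1}\colon Y^p-Y=X(X^{q/q_0}-X^{q_0})$ (this corresponds, in your language, to isolating one $\mathbb{F}_p^{\ast}$-orbit of the parameter $a$). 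The sign for $p\mid n$, $n$ even, is then obtained by exhibiting an $\mathbb{F}_{q^p}$-rational cover $\tilde{\mathcal{X}}_{1}\to\{Y^p-Y=\beta X^2\}$ and invoking Serre's theorem together with the known maximality criterion for the target. The sign for $p\nmid n$, $n$ odd, is obtained by first observing that for $n=1$ the radical is the whole space (your $r_a=0$), so $\text{N}_q$ is determined with no ambiguity, and then propagating to all odd $n$ with $p\nmid n$ via the McGuire--Yilmaz theorem on supersingular curves (Theorem~\ref{Theorem: Gary McGuire and Yilmaz' theorem}), using that the period of $\tilde{\mathcal{X}}_{1}$ is $2p$ or $4p$.

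So your approach and the paper's diverge precisely at the point where you stop: where you would need an explicit discriminant calculation, the paper instead uses a covering argument for one case and supersingularity-plus-period for the other. If you want to complete your route, the natural move is the same: note that your own formula already gives $\text{N}_q(\mathcal{X}_{\mathcal{G}_{\mathcal{S}}})=q^2+1$ (since $r_a=0$ at $n=1$) with no sign to determine, deduce that $\mathcal{X}_{\mathcal{G}_{\mathcal{S}}}$ is supersingular with period $2p$ or $4p$ from the cases you have already settled, and then invoke Theorem~\ref{Theorem: Gary McGuire and Yilmaz' theorem} to transport the $n=1$ value to all odd $n$ with $p\nmid n$; the case $p\mid n$, $n$ even, still requires an extra idea such as the covering used in the paper.
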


\begin{theorem}
\label{Theorem: the L-polynomial of XGS}
Let $g$ be the genus of $\mathcal{X}_{\mathcal{G}_{\mathcal{S}}}$, and let $\mathcal{M}_{p}(T)$ be the minimal polynomial of
$
-\zeta_{p}/\tilde{p}^{1/2}
$
over $\mathbb{Q}$, where $\zeta_{p}$ is the primitive $p$-th root of unity $\mathbf{e}^{\frac{2\pi\mathbf{i}}{p}}$. Then, the L-polynomial $L_{\mathcal{X}_{\mathcal{G}_{\mathcal{S}}}/\mathbb{F}_{q}}(T)$ of $\mathcal{X}_{\mathcal{G}_{\mathcal{S}}}$ is given by
\begin{equation}
\label{Equation: the L-polynomial over Fq}
L_{\mathcal{X}_{\mathcal{G}_{\mathcal{S}}}/\mathbb{F}_{q}}(T)=\bigg(p^{p-1}\cdot(qT^{2}-\upeta(-1))\cdot \mathcal{M}_{p}(p^{t-1}T)^{2}\bigg)^{\frac{g}{p}}.
\end{equation}
Moreover: 
\begin{enumerate}
\item[\emph{1.}] If $p\mid n$, then $L_{\mathcal{X}_{\mathcal{G}_{\mathcal{S}}}/\mathbb{F}_{q^{n}}}(T)$ is given by
\begin{enumerate}
\item[\emph{a)}] $\bigg(q^{n}T^{2}-\upeta(-1)\bigg)^{g}$, if $n$ is odd.
\item[\emph{b)}] $\bigg(q^{n/2}T-\upeta((-1)^{n/2})\bigg)^{2g}$, if $n$ is even.
\end{enumerate}
\item[\emph{2.}] If $p\nmid n$, then $L_{\mathcal{X}_{\mathcal{G}_{\mathcal{S}}}/\mathbb{F}_{q^{n}}}(T)$ is given by
\begin{enumerate}
\item[\emph{a)}] $\bigg(p^{p-1}\cdot(q^{n}T^{2}-\upeta(-1))\cdot \mathcal{M}_{p}(\upeta((-1)^{(n-1)/2}n)\cdot p^{t-1}q^{(n-1)/2}T)^{2}\bigg)^{\frac{g}{p}}$, if $n$ is odd.
\item[\emph{b)}] $\Bigg(q^{\frac{np}{2}}T^{p}-\upeta((-1)^{n/2})\Bigg)^{\frac{2g}{p}}$, if $n$ is even.
\end{enumerate}
\end{enumerate}
\end{theorem}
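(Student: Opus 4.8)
The plan is to read off the $L$-polynomial from the point counts already established in Theorem~\ref{Theorem: the number of Fqn rational points on XGS}. Write $g$ for the genus of $\mathcal{X}_{\mathcal{G}_{\mathcal{S}}}$ and recall that the zeta function, hence $L_{\mathcal{X}_{\mathcal{G}_{\mathcal{S}}}/\mathbb{F}_{q}}(T)=\prod_{i=1}^{2g}(1-\alpha_{i}T)$, is the unique such expression whose reciprocal roots satisfy $N_{q^{n}}=q^{n}+1-\sum_{i=1}^{2g}\alpha_{i}^{n}$ for every $n\geqslant 1$. Thus it suffices to exhibit a multiset $\Lambda$ of $2g$ complex numbers of absolute value $q^{1/2}$, to verify that $\prod_{\gamma\in\Lambda}(1-\gamma T)$ equals the right-hand side of \eqref{Equation: the L-polynomial over Fq}, and to check the power sums $\sum_{\gamma\in\Lambda}\gamma^{n}=q^{n}+1-N_{q^{n}}$ against Theorem~\ref{Theorem: the number of Fqn rational points on XGS}; the base-change formulas then follow from $L_{\mathcal{X}_{\mathcal{G}_{\mathcal{S}}}/\mathbb{F}_{q^{n}}}(T)=\prod_{\gamma\in\Lambda}(1-\gamma^{n}T)$.

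The multiset $\Lambda$ is built from Gauss sums. The quadratic Gauss sum identity gives $\sum_{c=1}^{p-1}\upeta(c)\zeta_{p}^{c}=\tilde{p}^{1/2}$, and the Galois group $(\mathbb{Z}/p\mathbb{Z})^{\times}$ acts on $\mathbb{Q}(\zeta_{p})$ by $\sigma_{a}\colon\zeta_{p}\mapsto\zeta_{p}^{a}$, $\tilde{p}^{1/2}\mapsto\upeta(a)\tilde{p}^{1/2}$; hence the $p-1$ conjugates of $-\zeta_{p}/\tilde{p}^{1/2}$ are the distinct numbers $\gamma_{a}=-\upeta(a)\zeta_{p}^{a}/\tilde{p}^{1/2}$, so $\mathcal{M}_{p}(S)=\prod_{a}(S-\gamma_{a})$, and since $\prod_{a}\gamma_{a}=p^{-(p-1)/2}$ one gets $p^{(p-1)/2}\mathcal{M}_{p}(p^{t-1}T)=\prod_{a}(1-\alpha_{a}T)$ with $\alpha_{a}\coloneqq p^{t-1}/\gamma_{a}=-\upeta(a)\,p^{t-1}\tilde{p}^{1/2}\zeta_{p}^{-a}$, each of absolute value $q^{1/2}=p^{(2t-1)/2}$. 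Setting also $\beta_{\pm}\coloneqq\pm(q\upeta(-1))^{1/2}$, so that $(1-\beta_{+}T)(1-\beta_{-}T)=-\upeta(-1)(qT^{2}-\upeta(-1))$, we take $\Lambda$ to contain $\beta_{+},\beta_{-}$ each with multiplicity $g/p$ and each $\alpha_{a}$ with multiplicity $2g/p$; then $|\Lambda|=2g$, and using $(-\upeta(-1))^{g/p}=1$ (which follows from the value of $g$ together with $p\bmod 4$) one finds $\prod_{\gamma\in\Lambda}(1-\gamma T)=\bigl(p^{p-1}(qT^{2}-\upeta(-1))\,\mathcal{M}_{p}(p^{t-1}T)^{2}\bigr)^{g/p}$.

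The power-sum check reduces to elementary character sums over $\mathbb{F}_{p}$. The $\beta_{\pm}$-part of $\sum_{\gamma\in\Lambda}\gamma^{n}$ vanishes when $n$ is odd and equals $2(g/p)(q\upeta(-1))^{n/2}$ when $n$ is even, while the $\alpha_{a}$-part is governed by $\sum_{a}\zeta_{p}^{an}$ (equal to $-1$ if $p\nmid n$ and to $p-1$ if $p\mid n$) and $\sum_{a}\upeta(a)\zeta_{p}^{an}$ (a twist of the quadratic Gauss sum, equal to $\upeta(n)\tilde{p}^{1/2}$ if $p\nmid n$ and to $0$ if $p\mid n$), together with $(\tilde{p}^{1/2})^{2}=\upeta(-1)p$ and $\upeta((-1)^{k})=\upeta(-1)^{k}$. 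Running through the four cases produces exactly the values $q^{n}+1-N_{q^{n}}$ of Theorem~\ref{Theorem: the number of Fqn rational points on XGS}; in particular the $\beta_{\pm}$- and $\alpha_{a}$-parts cancel precisely when $p\nmid n$ and $n$ is even. For the base-change statements one simplifies $\{\gamma^{n}:\gamma\in\Lambda\}$: if $p\mid n$ then $\zeta_{p}^{an}=1$, so all $\alpha_{a}^{n}$ collapse --- to $\upeta(-1)^{n/2}q^{n/2}$ if $n$ is even (giving 1b) and to $\pm(q^{n}\upeta(-1))^{1/2}$, each with multiplicity $g$, if $n$ is odd (giving 1a); if $p\nmid n$ with $n$ even, the $\alpha_{a}^{n}$ together with $\beta_{\pm}^{n}$ form an orbit $\{w\zeta_{p}^{j}\}_{j=0}^{p-1}$ with $w^{p}=\upeta(-1)^{n/2}q^{np/2}$ (each of multiplicity $2g/p$), yielding the factor $q^{np/2}T^{p}-\upeta(-1)^{n/2}$ (case 2b); and if $p\nmid n$ is odd, the substitution $a\mapsto an\bmod p$ gives $\{\alpha_{a}^{n}\}=\{\kappa\,\alpha_{a}\}$ for a rational scalar $\kappa$ with $p^{t-1}\kappa=\upeta((-1)^{(n-1)/2}n)\,p^{t-1}q^{(n-1)/2}$, so that the $\mathcal{M}_{p}$-factor becomes $p^{p-1}\mathcal{M}_{p}\bigl(\upeta((-1)^{(n-1)/2}n)\,p^{t-1}q^{(n-1)/2}T\bigr)^{2}$ (case 2a).

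I expect the main obstacle to be the sign bookkeeping in the last two steps: keeping the factors $\upeta(\pm 1)$, $\upeta(n)$, the relation $\tilde{p}=\upeta(-1)p$, and the half-integer powers of $p$ consistent across all residue and parity classes of $n$, and confirming that the ``rational-type'' reciprocal roots $\beta_{\pm}$ and the Gauss-sum reciprocal roots $\alpha_{a}$ cancel exactly --- and only --- when $p\nmid n$ and $n$ is even. It is also worth recording at the outset that $p\mid g$, so that the exponents $g/p$ and $2g/p$ and the multiset $\Lambda$ make sense, and that $(-\upeta(-1))^{g/p}=1$, so that \eqref{Equation: the L-polynomial over Fq} holds on the nose and not merely up to an overall sign.
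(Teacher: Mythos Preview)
Your argument is correct. It is, however, organized differently from the paper's proof, and the difference is worth noting.

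The paper does not verify the formula directly for $\mathcal{X}_{\mathcal{G}_{\mathcal{S}}}$. Instead it first invokes the elementary abelian $p$-extension structure once more to obtain the factorization
\[
L_{\mathcal{X}_{\mathcal{G}_{\mathcal{S}}}/\mathbb{F}_{q}}(T)=\bigl(L_{\tilde{\mathcal{X}}_{1}/\mathbb{F}_{q}}(T)\bigr)^{\frac{q-1}{p-1}},
\]
and then determines $L_{\tilde{\mathcal{X}}_{1}/\mathbb{F}_{q}}(T)$ by working with the normalized reciprocal $M_{\tilde{\mathcal{X}}_{1}/\mathbb{F}_{q}}(T)=\prod_i(T-\xi_i)$. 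The supersingularity and the explicit period of $\tilde{\mathcal{X}}_{1}$ force each $\xi_i^{2}$ into $\upeta(-1)\cdot(\mu_{p}\cup\{1\})$; the equalities $\sum\xi_i^{2}=0$, $\sum\xi_i^{p}=0$, $\sum\xi_i=-p^{t-1}(p-1)p^{1/2}$ coming from Theorem~\ref{Theorem: the number of Fqn rational points on XGS} then pin down the multiplicities and, via the quadratic Gauss sum, produce $\mathcal{M}_{p}$. Only at the end is the answer raised to the power $\frac{q-1}{p-1}$.

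Your route is a ``guess and verify'': you write down the full multiset of $2g$ reciprocal roots from Gauss sums, check once that it reproduces \eqref{Equation: the L-polynomial over Fq}, and then check all power sums against Theorem~\ref{Theorem: the number of Fqn rational points on XGS}; uniqueness of the $L$-polynomial does the rest. This avoids re-appealing to the Artin--Schreier tower inside the proof of Theorem~\ref{Theorem: the L-polynomial of XGS} (that structure is of course still present, but only through its use in Theorem~\ref{Theorem: the number of Fqn rational points on XGS}). The cost is heavier sign and exponent bookkeeping --- the identities $(-\upeta(-1))^{g/p}=1$, $(-\upeta(-1))^{g}=1$, and the parity of $2g/p$ that you need at several places to remove the ambient $\pm$ signs when passing from $\prod(1-\gamma^{n}T)$ to the stated forms. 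Those identities are all true (split on $t=1$ versus $t\geqslant 2$ and on $p\bmod 4$), so the argument goes through; it would strengthen the write-up to state and prove them once at the outset rather than invoke them in passing.
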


Finally, if $\overline{\mathbb{F}_{q}}(\mathcal{X}_{\mathcal{G}_{\mathcal{S}}})$ is the function field of $\mathcal{X}_{\mathcal{G}_{\mathcal{S}}}$, and $x,y\in\overline{\mathbb{F}_{q}}(\mathcal{X}_{\mathcal{G}_{\mathcal{S}}})$ are such that $\overline{\mathbb{F}_{q}}(\mathcal{X}_{\mathcal{G}_{\mathcal{S}}})=\overline{\mathbb{F}_{q}}(x,y)$ and $y^{q}-y=x^{q_{0}}(x^{q}-x)$, then the automorphism group of $\mathcal{X}_{\mathcal{G}_{\mathcal{S}}}$ is described as follows.

\begin{theorem}
\label{Theorem: the automorphism group of XGS}
The automorphism group $\emph{Aut}(\mathcal{X}_{\mathcal{G}_{\mathcal{S}}})$ of $\mathcal{X}_{\mathcal{G}_{\mathcal{S}}}$ has order $q^{2}(q-1)$ and is given by the maps
\begin{equation*}
(x,y)\mapsto (\alpha x+\beta,\alpha \beta^{q_{0}}x+\alpha^{q_{0}+1}y+\gamma),
\end{equation*}
where $\alpha\in\mathbb{F}_{q}^{\ast}$ and $\beta,\gamma\in\mathbb{F}_{q}$. Moreover, 
\begin{equation*}
\emph{Aut}(\mathcal{X}_{\mathcal{G}_{\mathcal{S}}})=\mathbb{G}\rtimes\mathbb{H},
\end{equation*}
where $\mathbb{G}$ is the Sylow $p$-subgroup of $\emph{Aut}(\mathcal{X}_{\mathcal{G}_{\mathcal{S}}})$ consisting of the maps
\begin{equation*}
(x,y)\mapsto (x+\beta,\beta^{q_{0}}x+y+\gamma),
\end{equation*}
with $\beta,\gamma\in\mathbb{F}_{q}$, and $\mathbb{H}$ is the cyclic complement of $\mathbb{G}$ in $\emph{Aut}(\mathcal{X}_{\mathcal{G}_{\mathcal{S}}})$, which can be described by the maps
\begin{equation*}
(x,y)\mapsto (\alpha x,\alpha^{q_{0}+1}y),
\end{equation*}
with $\alpha\in\mathbb{F}_{q}^{\ast}$.
\end{theorem}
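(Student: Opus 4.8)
The plan is to first verify that all the listed maps are indeed automorphisms of $\mathcal{X}_{\mathcal{G}_{\mathcal{S}}}$, and then to show that there are no others. For the first part, one substitutes $x' = \alpha x + \beta$, $y' = \alpha\beta^{q_{0}}x + \alpha^{q_{0}+1}y + \gamma$ into $(y')^{q} - y' = (x')^{q_{0}}((x')^{q} - x')$ and checks the identity holds modulo the relation $y^{q} - y = x^{q_{0}}(x^{q} - x)$, using that $\alpha, \beta, \gamma \in \mathbb{F}_{q}$ (so $\alpha^{q} = \alpha$, etc.) and additivity of the $q$-power and $q_{0}$-power maps in characteristic $p$. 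Closure under composition gives the group structure: the subset with $\alpha = 1$ is a normal (indeed, it is the $p$-part) subgroup $\mathbb{G}$ of order $q^{2}$, the maps $(x,y)\mapsto(\alpha x, \alpha^{q_{0}+1}y)$ form a cyclic group $\mathbb{H} \cong \mathbb{F}_{q}^{\ast}$ of order $q-1$ complementing it, and a direct computation of the conjugation action shows $\mathrm{Aut}(\mathcal{X}_{\mathcal{G}_{\mathcal{S}}}) = \mathbb{G} \rtimes \mathbb{H}$ with $|\mathrm{Aut}(\mathcal{X}_{\mathcal{G}_{\mathcal{S}}})| = q^{2}(q-1)$, provided these are all automorphisms.

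The substance of the proof is the reverse inclusion. I would argue via the unique point at infinity $P_{\infty}$ of the plane model. One checks that $P_{\infty}$ is the only point lying over the pole of $x$; since $x$ has pole divisor $q\cdot P_{\infty}$ and $y$ has pole divisor $(q + q_{0})\cdot P_{\infty}$, the Weierstrass semigroup at $P_{\infty}$ contains $q$ and $q + q_{0}$, and in fact $P_{\infty}$ can be shown to be the unique point whose semigroup is non-classical in the relevant sense (or: $P_{\infty}$ is fixed by the $p$-group $\mathbb{G}$, which acts with a single fixed point). Standard structure theory of automorphism groups of curves — the Hasse--Arf / Stichtenoth bounds on the ramification at a wildly ramified fixed point, together with the fact that a curve of genus $g \geq 2$ whose automorphism group order is large relative to $g$ must have a fixed point of a large $p$-subgroup — forces any automorphism to fix $P_{\infty}$ as well. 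Hence $\mathrm{Aut}(\mathcal{X}_{\mathcal{G}_{\mathcal{S}}})$ fixes $P_{\infty}$, so it acts on the Riemann--Roch spaces $\mathcal{L}(m P_{\infty})$; from the semigroup one reads that $\mathcal{L}(q P_{\infty}) = \langle 1, x\rangle$ and $\mathcal{L}((q+q_{0})P_{\infty}) = \langle 1, x, y\rangle$ (using the genus formula for \eqref{Equation: generalized Suzuki curve} to pin down the dimensions), so every automorphism sends $x \mapsto a x + b$ and $y \mapsto c x + d y + e$ with $a, d \neq 0$. Plugging this into the defining equation and comparing coefficients — here the key is that $X^{q_{0}}(X^{q}-X)$ has a very rigid shape — yields $b = \beta$, $c = \alpha\beta^{q_{0}}$, $d = \alpha^{q_{0}+1}$, $a = \alpha$ with $\alpha \in \mathbb{F}_{q}^{\ast}$ and $\beta, \gamma = e \in \mathbb{F}_{q}$, matching the asserted list.

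For the genus and the fixed-point count I would invoke Proposition~\ref{Proposition: properties of the plane generalized Suzuki curve} and the description of $P_{\infty}$ established there (and the data used in Theorems~\ref{Theorem: the number of Fqn rational points on XGS} and~\ref{Theorem: the L-polynomial of XGS}); in particular the genus is $g = q_{0}(q-1)/2 \cdot(\text{something})$ with $g \geq 2$, so the classification machinery for large automorphism groups applies. The main obstacle is showing that $P_{\infty}$ is fixed by the \emph{entire} automorphism group rather than merely by $\mathbb{G}$: a priori an automorphism could move $P_{\infty}$ to another point, and ruling this out requires either a careful Weierstrass-point / orbit argument (e.g., $P_{\infty}$ is characterized among all points by its gap sequence or by being the unique point with a certain non-gap below $2g$), or the Sylow-theoretic argument that any $p$-subgroup of order $> q$ (such as $\mathbb{G}$, since $q^{2} > q\sqrt{8g^{3}}$-type estimates hold) has a unique fixed point which must then be stabilized by its normalizer, combined with showing $\mathbb{G}$ is normal in $\mathrm{Aut}(\mathcal{X}_{\mathcal{G}_{\mathcal{S}}})$. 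Once $P_{\infty}$ is known to be a fixed point, the remaining coefficient comparison is routine but must be done carefully to separate the $p>2$ and $p=2$ cases and to confirm the claim in the introduction that the proof extends to the more general curve $\mathcal{X}$.
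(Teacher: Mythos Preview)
Your overall strategy matches the paper's: first verify that the listed maps form a subgroup $\mathfrak{G}\subseteq\mathrm{Aut}(\mathcal{X}_{\mathcal{G}_{\mathcal{S}}})$ fixing $Q_{\infty}$, then use the Riemann--Roch spaces $\mathscr{L}(qQ_{\infty})=\langle 1,x\rangle$ and $\mathscr{L}((q+q_{0})Q_{\infty})=\langle 1,x,y\rangle$ (this is Proposition~\ref{Proposition: some Riemann-Roch spaces}) to force any $\sigma\in\mathrm{Aut}_{Q_{\infty}}(\mathcal{X}_{\mathcal{G}_{\mathcal{S}}})$ into affine form, and finally compare coefficients against the defining equation. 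That part of your plan is essentially identical to the paper's Lemma~\ref{Lemma: G is equal to the stabilizer of Qinfty}.

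The point where you diverge from the paper, and where your proposal is not yet a proof, is exactly the step you flag as ``the main obstacle'': passing from $\mathfrak{G}=\mathrm{Aut}_{Q_{\infty}}(\mathcal{X}_{\mathcal{G}_{\mathcal{S}}})$ to $\mathfrak{G}=\mathrm{Aut}(\mathcal{X}_{\mathcal{G}_{\mathcal{S}}})$. Your two suggested routes are both incomplete as stated. The Sylow route (``$\mathbb{G}$ has a unique fixed point, which is stabilized by its normalizer, combined with showing $\mathbb{G}$ is normal'') is circular: knowing $N(\mathbb{G})$ fixes $Q_{\infty}$ only gives $N(\mathbb{G})\subseteq\mathrm{Aut}_{Q_{\infty}}$, and you have no independent handle on normality of $\mathbb{G}$ in the full group without already knowing that group. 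The Weierstrass route (``$Q_{\infty}$ is the unique point with its gap sequence'') is plausible but would require computing gap sequences at \emph{all} points, which you have not done.

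The paper closes this gap with a single citation: Theorem~\ref{Theorem: sufficient condition for the stabilizer of a point be the entire group of automorphisms} (which is \cite[Theorem~11.140]{HKT}). That result says that if $|\mathrm{Aut}_{Q}^{(1)}(\mathcal{Y})|>2g+1$ then either $\mathrm{Aut}(\mathcal{Y})=\mathrm{Aut}_{Q}(\mathcal{Y})$ or $\mathcal{Y}$ is birationally one of the Hermitian, Suzuki, or Ree curves. Here $|\mathrm{Aut}_{Q_{\infty}}^{(1)}|=q^{2}>q_{0}(q-1)+1=2g+1$, and a genus comparison (for $t>1$) or an inflection-point count (for $m=t=1$) rules out the three exceptional curves since $p$ is odd. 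This classification theorem is the missing ingredient in your argument; once you invoke it, the rest of your outline goes through exactly as the paper does. (Minor correction: the genus is precisely $g=q_{0}(q-1)/2$, not that times ``something''.)
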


This paper is organized as follows. Section \ref{Section: Preliminaries} provides the general background used to prove Theorems \ref{Theorem: the number of Fqn rational points on XGS}, \ref{Theorem: the L-polynomial of XGS}, and \ref{Theorem: the automorphism group of XGS}, which is done in Sections \ref{Section: Proof of Theorem 1}, \ref{Section: Proof of Theorem 2}, and \ref{Section: Proof of Theorem 3}, respectively. Finally, in Section \ref{Section: Applications, Examples, and Remarks} some examples and applications of Theorems \ref{Theorem: the number of Fqn rational points on XGS} and \ref{Theorem: the L-polynomial of XGS} are considered.

\begin{center}
\bf{Notation}
\end{center}

\noindent Together with \eqref{Equation: definition of quadratic character} and \eqref{Equation: definition of p*1/2}, the following notation is used throughout this text.

\begin{itemize}
\item $p$ is a prime number, $q=p^{m}$, and $q_{0}=p^{t}$ for some positive integers $m,t$.
\item For each positive integer $n$, $\mathbb{F}_{q^{n}}$ is the finite field with $q^{n}$ elements, $\mathbb{F}_{q^{n}}^{\ast}\coloneqq\mathbb{F}_{q^{n}}\setminus\{0\}$, $\overline{\mathbb{F}_{q}}$ is the algebraic closure of $\mathbb{F}_{q^{n}}$, and $\overline{\mathbb{F}_{q}}^{\ast}\coloneqq\overline{\mathbb{F}_{q}}\setminus\{0\}$.
\item $\text{Tr}_{\mathbb{F}_{q^{n}}/\mathbb{F}_{p}}$ and $\text{N}_{\mathbb{F}_{q^{n}}/\mathbb{F}_{p}}$ denote the absolute trace and norm functions of $\mathbb{F}_{q^{n}}$, respectively.
\item The term \textit{curve} (resp. \textit{plane curve}) means a projective, nonsingular, geometrically irreducible, algebraic curve (resp. a projective plane algebraic curve).
\item For a curve $\mathcal{Y}$ defined over $\mathbb{F}_{q}$
\begin{itemize}
\item $\overline{\mathbb{F}_{q}}(\mathcal{Y})$ is the function field of $\mathcal{Y}$, and $\mathbb{F}_{q^{n}}(\mathcal{Y})$ is the $\mathbb{F}_{q^{n}}$-rational function field of $\mathcal{Y}$;
\item $\mathcal{Y}(\mathbb{F}_{q^{n}})$ is the set of $\mathbb{F}_{q^{n}}$-rational points on $\mathcal{Y}$, and $\text{N}_{q^{n}}(\mathcal{Y})=\#\mathcal{Y}(\mathbb{F}_{q^{n}})$;
\item $L_{\mathcal{Y}/\mathbb{F}_{q^{n}}}(T)$ is the L-polynomial of $\mathcal{Y}$ viewed as a curve defined over $\mathbb{F}_{q^{n}}$;
\item $J_{\mathcal{Y}}$ is the Jacobian of $\mathcal{Y}$, and $J_{\mathcal{Y}}(\mathbb{F}_{q^{n}})$ is the group of $\mathbb{F}_{q^{n}}$-rational points on $J_{\mathcal{Y}}$. Also, assuming that $\mathcal{Y}$ has a rational point $P_{0}\in\mathcal{Y}(\mathbb{F}_{q})$, and considering $\mathcal{Y}$ embedded in $J_{\mathcal{Y}}$ via
$$
P_{0}\mapsto 0,
$$
$\langle \mathcal{Y}(\mathbb{F}_{q^{n}}) \rangle$ is the subgroup of $J_{\mathcal{Y}}(\mathbb{F}_{q^{n}})$ generated by $\mathcal{Y}(\mathbb{F}_{q^{n}})$.
\end{itemize}
\item $\mathbf{e}$ is the Euler's number, $\mathbf{i}$ is the imaginary unity, and $\zeta_{k}\coloneqq\mathbf{e}^{\frac{2\pi\mathbf{i}}{k}}$ for each positive integer $k$.
\item For each positive integer $k$, 
\begin{equation*}
\mu_{k}\coloneqq\bigg\{\zeta_{k}^{i}\,\,:\,\,1\leqslant i\leqslant k\text{ and }\gcd(i,k)=1\bigg\}
\end{equation*}
is the set of roots of the $k$-th cyclotomic polynomial $\Phi_{k}(T)$.
\end{itemize}

\section{Preliminaries}\label{Section: Preliminaries}

\subsection{L-Polynomials and Supersingular Curves}\label{Subsection: L-polynomials supersingular curves}

Let $\mathcal{Y}$ be a curve of genus $g$ defined over the finite field $\mathbb{F}_{q}$. It is well known that the L-polynomial $L_{\mathcal{Y}/\mathbb{F}_{q}}(T)\in\mathbb{Z}[T]$ has degree $2g$ and satisfies $L_{\mathcal{Y}/\mathbb{F}_{q}}(0)=1$ \cite[Chapter $9$]{HKT}. Further, if $\omega_{1},\ldots,\omega_{2g}\in\mathbb{C}$ are the roots of the reciprocal polynomial of $L_{\mathcal{Y}/\mathbb{F}_{q}}(T)$, then the following holds:
\begin{equation}
\label{Equation: number of rational points and Weil-number - first form}
\text{N}_{q^{n}}(\mathcal{Y})=q^{n}+1-\sum_{i=1}^{2g}\omega_{i}^{n}.
\end{equation}

From the Hasse-Weil theorem, $|\omega_{i}|=q^{1/2}$ for all $i=1,\ldots,2g$, and then $\omega_{i}=q^{1/2}\xi_{i}$, with $|\xi_{i}|=1$ for each $i$. In particular, \eqref{Equation: number of rational points and Weil-number - first form} can be rewritten as
\begin{equation*}
\text{N}_{q^{n}}(\mathcal{Y})=q^{n}+1-q^{n/2}\sum_{i=1}^{2g}\xi_{i}^{n},
\end{equation*}
and thus $\mathcal{Y}$ is $\mathbb{F}_{q^{n}}$-maximal (resp. $\mathbb{F}_{q^{n}}$-minimal) if and only if $\xi_{i}^{n}=-1$ (resp. $\xi_{i}^{n}=1$) for each $i\in\{1,\ldots,2g\}$.

The curve $\mathcal{Y}$ is supersingular if $\xi_{i}$ is a root of unity for all $i=1,\ldots,2g$. In this case, the number
\begin{equation*}
s\coloneqq\min\bigg\{n\,\,:\,\,\xi_{i}^{n}=1 \text{ for all }i=1,\ldots,2g\bigg\}
\end{equation*}
is called the period of $\mathcal{Y}$ over $\mathbb{F}_{q}$.

The following result is used in the proof of Theorem \ref{Theorem: the number of Fqn rational points on XGS}. 

\begin{theorem}[\citealp{GMc}, Theorem $1$]
\label{Theorem: Gary McGuire and Yilmaz' theorem}
For $q$ odd, let $\mathcal{Y}$ be a curve defined over $\mathbb{F}_{q}$. If $\mathcal{Y}$ is supersingular with period $s$ over $\mathbb{F}_{q}$, and $n=n_{1}k$, where $n_{1}=\gcd(n, s)$, then the following occurs:
\begin{enumerate}
\item[$1.$] If $n_{1}m$ is even, then
\begin{equation*}
\emph{N}_{q^{n}}(\mathcal{Y})-(q^{n}+1)=q^{(n-n_{1})/2}\bigg[\emph{N}_{q^{n_{1}}}(\mathcal{Y})-(q^{n_{1}}+1)\bigg].
\end{equation*}
\item[$2.$] If $n_{1}m$ is odd and $p\mid k$, then
\begin{equation*}
\emph{N}_{q^{n}}(\mathcal{Y})-(q^{n}+1)=q^{(n-n_{1})/2}\bigg[\emph{N}_{q^{n_{1}}}(\mathcal{Y})-(q^{n_{1}}+1)\bigg].
\end{equation*}
\item[$3.$]  If $n_{1}m$ is odd and $p\nmid k$, then
\begin{equation*}
\emph{N}_{q^{n}}(\mathcal{Y})-(q^{n}+1)=\upeta((-1)^{(k-1)/2}k)\cdot q^{(n-n_{1})/2}\bigg[\emph{N}_{q^{n_{1}}}(\mathcal{Y})-(q^{n_{1}}+1)\bigg].
\end{equation*}
\end{enumerate}
\end{theorem}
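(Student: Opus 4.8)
The plan is to reduce the three formulas to a single identity about power sums of the normalized Frobenius roots, and then to read that identity off the Galois action on the L-polynomial over $\mathbb{F}_{q^{n_1}}$, the quadratic character entering through a quadratic Gauss sum. Writing $\omega_1,\dots,\omega_{2g}$ for the roots of the reciprocal of $L_{\mathcal{Y}/\mathbb{F}_q}(T)$ and $\omega_i=q^{1/2}\xi_i$ with $\xi_i\in\mu_s$ (supersingularity together with the definition of the period $s$), formula \eqref{Equation: number of rational points and Weil-number - first form} gives for every $\ell$
\[
\mathrm{N}_{q^{\ell}}(\mathcal{Y})-(q^{\ell}+1)=-q^{\ell/2}\sum_{i=1}^{2g}\xi_i^{\ell}.
\]
Setting $\nu_i:=\xi_i^{n_1}$ and $k:=n/n_1$, so that $\xi_i^{n}=\nu_i^{k}$ and $q^{n/2}=q^{(n-n_1)/2}q^{n_1/2}$, all three displayed formulas become equivalent to the single identity
\[
\sum_{i=1}^{2g}\nu_i^{k}=c\sum_{i=1}^{2g}\nu_i,\qquad c=\begin{cases}1,&\text{in cases }1\text{ and }2,\\ \upeta\big((-1)^{(k-1)/2}k\big),&\text{in case }3.\end{cases}
\]
The arithmetic fact I would record is that $n_1=\gcd(n,s)$ forces $\gcd(k,N)=1$, where $N:=s/n_1$; since each $\nu_i$ lies in $\mu_N$, the map $\zeta\mapsto\zeta^{k}$ on $\mu_N$ is realized by an automorphism $\sigma_k\in\mathrm{Gal}(\mathbb{Q}(\zeta_N)/\mathbb{Q})$.

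Next I would exploit integrality. The reciprocal of $L_{\mathcal{Y}/\mathbb{F}_{q^{n_1}}}(T)$ lies in $\mathbb{Z}[T]$ and has roots $\omega_i^{n_1}=q^{n_1/2}\nu_i$, so $\mathrm{Gal}(\overline{\mathbb{Q}}/\mathbb{Q})$ permutes the multiset $\{\,q^{n_1/2}\nu_i\,\}$. Choosing $\tau\in\mathrm{Gal}(\overline{\mathbb{Q}}/\mathbb{Q})$ that restricts to $\sigma_k$ on $\mu_N$, one has $\tau(q^{n_1/2}\nu_i)=\lambda\,q^{n_1/2}\nu_i^{k}$ with $\lambda:=\tau(q^{n_1/2})/q^{n_1/2}$, and permutation invariance yields the multiset equality $\{\,\lambda\nu_i^{k}\,\}=\{\,\nu_i\,\}$, hence $\lambda\sum_i\nu_i^{k}=\sum_i\nu_i$. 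As $\lambda\in\{\pm1\}$, the whole theorem is reduced to computing the sign by which $\sigma_k$ scales $q^{n_1/2}=p^{(n_1m-1)/2}\,p^{1/2}$.

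The sign computation then follows the stated dichotomy. If $n_1m$ is even, then $q^{n_1/2}\in\mathbb{Q}$, so $\lambda=1$ and $c=1$; this is case $1$. If $n_1m$ is odd, then $\lambda=\sigma_k(p^{1/2})/p^{1/2}$, which I would evaluate through the quadratic Gauss sum $g_p=\sum_{a\in\mathbb{F}_p^{\ast}}\upeta(a)\zeta_p^{a}=\tilde{p}^{1/2}$, for which $\sigma_k(g_p)=\upeta(k)\,g_p$. When $p\mid k$ (case $2$), $\gcd(k,N)=1$ forces $p\nmid N$, so $p^{1/2}\notin\mathbb{Q}(\zeta_N)$; both lifts of $\sigma_k$ with $\lambda=\pm1$ then occur, which forces $\{\,\nu_i\,\}$ to be stable under negation and hence $\sum_i\nu_i=\sum_i\nu_i^{k}=0$, so the identity holds trivially. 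In case $3$ the same vanishing occurs whenever $p^{1/2}\notin\mathbb{Q}(\zeta_N)$; otherwise $p\mid N$ and $\tilde{p}^{1/2}\in\mathbb{Q}(\zeta_p)\subseteq\mathbb{Q}(\zeta_N)$, and converting between $p^{1/2}$ and $\tilde{p}^{1/2}$ — which differ by $\mathbf{i}=\zeta_4$ exactly when $\upeta(-1)=-1$, where $\sigma_k(\mathbf{i})=\mathbf{i}^{k}$ contributes the factor $(-1)^{(k-1)/2}$ — gives $\lambda=\upeta(k)\cdot\upeta(-1)^{(k-1)/2}=\upeta\big((-1)^{(k-1)/2}k\big)$, the claimed constant.

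The hard part will be the bookkeeping in the $n_1m$ odd case: rigorously producing a $\tau$ that realizes $\sigma_k$ while controlling its action on $p^{1/2}$, pinning down the sign of the quadratic character from $\sigma_k(g_p)=\upeta(k)\,g_p$ combined with the $\zeta_4$-conversion between $p^{1/2}$ and $\tilde{p}^{1/2}$, and checking that in every configuration where the Galois data leaves $\lambda$ undetermined (precisely when $p^{1/2}\notin\mathbb{Q}(\zeta_N)$) the two power sums actually vanish, so that the stated constant is immaterial there. The remaining steps are the routine translation through \eqref{Equation: number of rational points and Weil-number - first form}.
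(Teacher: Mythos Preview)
The paper does not prove this theorem: it is quoted verbatim from McGuire and Yilmaz \cite{GMc} as a preliminary result (Theorem~2.1 in Section~\ref{Subsection: L-polynomials supersingular curves}), so there is no ``paper's own proof'' to compare against. Your sketch is therefore being measured against the original source, not the present paper.

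That said, your argument is essentially the correct one and is the natural proof of the result. The reduction to the identity $\sum_i\nu_i^{k}=c\sum_i\nu_i$ via the normalized Frobenius roots, the Galois-stability of the multiset $\{q^{n_1/2}\nu_i\}$ coming from $L_{\mathcal{Y}/\mathbb{F}_{q^{n_1}}}(T)\in\mathbb{Z}[T]$, and the computation of the sign $\lambda=\tau(p^{1/2})/p^{1/2}$ through the quadratic Gauss sum are exactly the ingredients used in \cite{GMc}. Two small points worth tightening: first, in the $p\equiv 3\pmod 4$ subcase you implicitly use that $k$ is odd when $4\mid N$ --- this follows from $\gcd(k,N)=1$, but should be stated. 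Second, the case $p\equiv 3\pmod 4$, $p\mid N$, $4\nmid N$ (so $\tilde{p}^{1/2}\in\mathbb{Q}(\zeta_N)$ but $p^{1/2}\notin\mathbb{Q}(\zeta_N)$) is covered by your vanishing argument, but it is worth flagging explicitly that this configuration does occur and falls under ``$p^{1/2}\notin\mathbb{Q}(\zeta_N)$''. With these clarifications the argument is complete.
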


Finally, if $\mathcal{Z}$ is a curve defined over $\mathbb{F}_{q}$ which is $\mathbb{F}_{q}$-covered by $\mathcal{Y}$, then the following theorem \cite[Proposition $5$]{AUBRY-PERRET} provides a relation between $L_{\mathcal{Y}/\mathbb{F}_{q}}(T)$ and $L_{\mathcal{Z}/\mathbb{F}_{q}}(T)$.

\begin{theorem}[Serre]
\label{Theorem: theorem of Serre}
Let $\mathcal{Z}$ be a curve defined over $\mathbb{F}_{q}$, and let $\varphi:\mathcal{Y}\rightarrow\mathcal{Z}$ be an $\mathbb{F}_{q}$-rational covering. Then, $L_{\mathcal{Z}/\mathbb{F}_{q}}(T)$ divides $L_{\mathcal{Y}/\mathbb{F}_{q}}(T)$. Further, suppose that $\mathcal{Y}$ is either $\mathbb{F}_{q}$-maximal or $\mathbb{F}_{q}$-minimal. Then, $\mathcal{Y}$ is $\mathbb{F}_{q}$-maximal \emph{(}resp. $\mathbb{F}_{q}$-minimal\emph{)} if and only if $\mathcal{Z}$ is $\mathbb{F}_{q}$-maximal \emph{(}resp. $\mathbb{F}_{q}$-minimal\emph{)}.
\end{theorem}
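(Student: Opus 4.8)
The plan is to deduce the statement from the functorial behaviour of Jacobians together with the description of maximality and minimality in terms of the Weil numbers $\omega_{i}=q^{1/2}\xi_{i}$ recorded above. Write $h_{\mathcal{Y}}(T)=\prod_{i=1}^{2g}(T-\omega_{i})$ for the reciprocal of $L_{\mathcal{Y}/\mathbb{F}_{q}}(T)$, which is the characteristic polynomial of the $q$-power Frobenius endomorphism $\Phi$ acting on $J_{\mathcal{Y}}$ (equivalently, on its Tate module). The entire argument reduces to showing that the multiset of Weil numbers of $\mathcal{Z}$ is contained in that of $\mathcal{Y}$: both the divisibility and the maximality assertions follow formally from this inclusion.

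First I would establish the divisibility. The $\mathbb{F}_{q}$-rational covering $\varphi\colon\mathcal{Y}\rightarrow\mathcal{Z}$ of degree $d$ induces, by functoriality of the Jacobian, two homomorphisms of abelian varieties defined over $\mathbb{F}_{q}$: the pullback $\varphi^{\ast}\colon J_{\mathcal{Z}}\rightarrow J_{\mathcal{Y}}$ and the norm $\varphi_{\ast}\colon J_{\mathcal{Y}}\rightarrow J_{\mathcal{Z}}$, satisfying $\varphi_{\ast}\circ\varphi^{\ast}=[d]$. Since $[d]$ is an isogeny, $\varphi^{\ast}$ has finite kernel and is therefore an isogeny onto its image $B\coloneqq\varphi^{\ast}(J_{\mathcal{Z}})$, an abelian subvariety of $J_{\mathcal{Y}}$ isogenous to $J_{\mathcal{Z}}$. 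Because $\varphi$ is defined over $\mathbb{F}_{q}$, the map $\varphi^{\ast}$ commutes with $\Phi$, so $B$ is $\Phi$-stable. The characteristic polynomial of $\Phi$ on a $\Phi$-stable subvariety divides the characteristic polynomial on the whole variety, and it is an isogeny invariant; hence $h_{\mathcal{Z}}(T)$ divides $h_{\mathcal{Y}}(T)$. Both are monic in $\mathbb{Z}[T]$, so by Gauss's lemma the quotient lies in $\mathbb{Z}[T]$, and reciprocating yields $L_{\mathcal{Z}/\mathbb{F}_{q}}(T)\mid L_{\mathcal{Y}/\mathbb{F}_{q}}(T)$. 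Equivalently, the multiset $\{\omega_{i}\}$ associated with $\mathcal{Z}$ is contained in that of $\mathcal{Y}$.

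For the second assertion I would invoke the criterion recorded just above the statement: $\mathcal{Y}$ is $\mathbb{F}_{q}$-maximal precisely when $\omega_{i}=-q^{1/2}$ for every $i$, and $\mathbb{F}_{q}$-minimal precisely when $\omega_{i}=q^{1/2}$ for every $i$. If $\mathcal{Y}$ is maximal, then every Weil number of $\mathcal{Y}$ equals $-q^{1/2}$, and since the Weil numbers of $\mathcal{Z}$ form a sub-multiset, they too all equal $-q^{1/2}$, so $\mathcal{Z}$ is maximal; the minimal case is identical with $-q^{1/2}$ replaced by $q^{1/2}$. For the converses one uses the standing hypothesis that $\mathcal{Y}$ is maximal or minimal, together with the fact that for a curve of positive genus the two conditions are mutually exclusive (the constant values $-q^{1/2}$ and $q^{1/2}$ differ): if $\mathcal{Z}$ is maximal it has a Weil number equal to $-q^{1/2}$, which also appears among the Weil numbers of $\mathcal{Y}$, ruling out $\mathcal{Y}$ minimal and forcing $\mathcal{Y}$ maximal. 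The minimal case is symmetric.

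The main obstacle is the first paragraph: making precise that $\varphi^{\ast}$ is a Frobenius-equivariant isogeny onto a $\Phi$-stable abelian subvariety isogenous to $J_{\mathcal{Z}}$, and that $L_{\mathcal{Y}/\mathbb{F}_{q}}(T)$ is genuinely the reciprocal of the Frobenius characteristic polynomial on $J_{\mathcal{Y}}$. Once the sub-multiset inclusion of Weil numbers is secured, the divisibility and the maximality/minimality equivalences are immediate consequences of the Weil-number criterion. A mild edge case worth noting is $g_{\mathcal{Z}}=0$, where $L_{\mathcal{Z}/\mathbb{F}_{q}}(T)=1$ and the maximality/minimality assertions become vacuous; this causes no difficulty.
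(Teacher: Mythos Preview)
The paper does not give its own proof of this theorem: it is stated as a known result and attributed to Serre via \cite[Proposition~5]{AUBRY-PERRET}. So there is no in-paper argument to compare against.

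Your proof is correct and is essentially the standard approach. The key mechanism---that $\varphi^{\ast}$ is an $\mathbb{F}_{q}$-rational isogeny from $J_{\mathcal{Z}}$ onto a Frobenius-stable abelian subvariety of $J_{\mathcal{Y}}$, whence the characteristic polynomial of Frobenius on $J_{\mathcal{Z}}$ divides that on $J_{\mathcal{Y}}$---is exactly what underlies the cited result, and your deduction of the maximality/minimality equivalence from the sub-multiset inclusion of Weil numbers is the expected one. The handling of the converse direction (using the standing hypothesis that $\mathcal{Y}$ is already maximal or minimal, together with mutual exclusivity when $g_{\mathcal{Y}}>0$) and the remark on the $g_{\mathcal{Z}}=0$ edge case are both appropriate.
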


\subsection{On the Number of $\mathbb{F}_{q^{n}}$-Rational Points of $Y^{p}-Y=XR(X)$}\label{Subsection: On the number of Fqn-rational points of Yp-Y=XR(X)}

Suppose that $p$ is an odd prime number. For each positive integer $n$, the number of $\mathbb{F}_{q^{n}}$-rational points on a nonsingular model $\mathcal{Y}_{R}$ of the plane curve given in affine coordinates by
\begin{equation*}
\mathcal{F}_{R}:Y^{p}-Y=XR(X),
\end{equation*}
where $R(X)$ is a $p$-polynomial defined over $\mathbb{F}_{q}$; that is,
\begin{equation*}
R(X)=\alpha_{k}X^{p^{k}}+\alpha_{k-1}X^{p^{k-1}}+\cdots+\alpha_{1}X^{p}+\alpha_{0}X,
\end{equation*}
with $\alpha_{0},\ldots,\alpha_{k}\in\mathbb{F}_{q}$ and $\alpha_{k}\neq 0$, is studied. 

The following proposition presents some properties of $\mathcal{F}_{R}$.

\begin{proposition}[\citealp{HKT}, Lemma $12.1$]
\label{Proposition: properties of the plane curve FR}
The plane curve $\mathcal{F}_{R}$ is geometrically irreducible. Moreover:
\begin{enumerate}
\item[$1.$] The genus $g$ of $\mathcal{F}_{R}$ is given by
\begin{equation*}
g=\frac{p^{k}(p-1)}{2}.
\end{equation*}
\item[$2.$] $P=(0:1:0)\in\mathbb{P}^{2}(\overline{\mathbb{F}_{q}})$ is the unique point at infinity of $\mathcal{F}_{R}$. If $k=1$, then $\mathcal{F}_{R}$ is nonsingular. Otherwise, $P$ is the unique singular point of $\mathcal{F}_{R}$. In both cases, $P$ has multiplicity $p^{k}+1-p$ and is the center of only one $\mathbb{F}_{q}$-rational branch of $\mathcal{F}_{R}$.
\end{enumerate}
\end{proposition}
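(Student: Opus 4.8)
The plan is to exploit the Artin--Schreier structure of the equation $Y^p-Y=XR(X)$. For geometric irreducibility, I would observe that $\overline{\mathbb{F}_q}(X)(Y)/\overline{\mathbb{F}_q}(X)$ is a degree-$p$ Artin--Schreier extension: by the standard criterion, it is a field (hence the curve is irreducible over $\overline{\mathbb{F}_q}$) as soon as $XR(X)$ is not of the form $Z^p-Z$ for some $Z\in\overline{\mathbb{F}_q}(X)$, which is immediate since $XR(X)$ is a polynomial with a pole of order $p^k+1$ (coprime to $p$, as $\gcd(p^k+1,p)=1$) at the place $X=\infty$ — a function in the image of the Artin--Schreier operator $\wp(Z)=Z^p-Z$ must have all its polar orders divisible by $p$. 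That same observation drives the ramification analysis: the only ramified place of $\overline{\mathbb{F}_q}(X)$ in the extension is $X=\infty$, and it is totally (wildly) ramified with different exponent $d=(p-1)(m_\infty+1)$ where $m_\infty=p^k+1$ is the pole order, since after reducing the pole order modulo $p$ as far as possible the remaining invariant is $p^k+1$.

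Next I would compute the genus via the Riemann--Hurwitz formula applied to the degree-$p$ cover $\mathcal{Y}_R\to\mathbb{P}^1$ given by $X$. With the single ramified place $X=\infty$ contributing different exponent $(p-1)(p^k+2)$ (using the conductor-discriminant bookkeeping for Artin--Schreier extensions with a pole of order prime to $p$), Riemann--Hurwitz gives
\begin{equation*}
2g-2=p\cdot(-2)+(p-1)(p^k+2),
\end{equation*}
which simplifies to $g=\dfrac{p^k(p-1)}{2}$, as claimed. I would double-check the arithmetic of the different exponent against the standard Artin--Schreier formula (e.g. \cite[Proposition 3.7.8]{HKT}-style statements, or Stichtenoth's book): for $\wp(Y)=f$ with $f$ having a unique pole of order $m$ prime to $p$, the place is totally ramified with different exponent $(p-1)(m+1)$.

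For part 2, I would pass to the projective closure of $\mathcal{F}_R$ in $\mathbb{P}^2$. Homogenizing $Y^p-Y=XR(X)$, one sees the only point on the line at infinity $Z=0$ is $P=(0:1:0)$, and a local analysis at $P$ (setting $Y=1$ and expanding) shows the lowest-degree terms have degree $p^k+1-p$, giving the stated multiplicity; when $k=1$ this multiplicity is $1$, so $\mathcal{F}_R$ is smooth, while for $k\geq 2$ it is the unique singular point. That $P$ is the center of a single $\mathbb{F}_q$-rational branch follows from the fact, established above, that $X=\infty$ is totally ramified in $\overline{\mathbb{F}_q}(\mathcal{Y}_R)/\overline{\mathbb{F}_q}(X)$: there is exactly one place of the function field lying over it, and it is fixed by Frobenius (the branch is defined over $\mathbb{F}_q$ because the defining equation and the uniformizer can be chosen over $\mathbb{F}_q$), hence rational. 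The main obstacle I anticipate is getting the different exponent at $X=\infty$ exactly right — one must be careful that no change of variable over $\overline{\mathbb{F}_q}(X)$ can lower the pole order of $XR(X)$ below $p^k+1$ (it cannot, since $p\nmid p^k+1$), so the naive formula $(p-1)(m_\infty+1)$ applies without correction; everything else is routine local computation and bookkeeping. Since the cited reference \cite[Lemma 12.1]{HKT} already records this statement, I would in practice simply invoke it, but the above is the self-contained argument behind it.
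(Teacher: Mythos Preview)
Your argument is correct and is precisely the standard Artin--Schreier analysis behind \cite[Lemma 12.1]{HKT}. Note, however, that the paper itself does not prove this proposition: it is stated with attribution to \cite{HKT} and used as a black box, so there is no in-paper proof to compare your approach against.
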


Therefore, the number of $\mathbb{F}_{q^{n}}$-rational points on $\mathcal{Y}_{R}$ is given by the number of solutions in $\mathbb{F}_{q^{n}}^{2}$ of the equation
\begin{equation}
\label{Equation: number of solutions in Fqn of Yp-Y=XR(X)}
Y^{p}-Y=XR(X)
\end{equation}
plus $1$. 

To count the number of solutions in $\mathbb{F}_{q^{n}}^{2}$ of \eqref{Equation: number of solutions in Fqn of Yp-Y=XR(X)}, set 
\begin{equation*}
\begin{array}{cccc}
B_{R}^{(n)}:&\mathbb{F}_{q^{n}}\times\mathbb{F}_{q^{n}}&\rightarrow&\mathbb{F}_{p}\\
&(\alpha,\beta)&\mapsto&\dfrac{1}{2}\text{Tr}_{\mathbb{F}_{q^{n}}/\mathbb{F}_{p}}(\alpha R(\beta)+\beta R(\alpha))
\end{array}.
\end{equation*}

One can check that $B_{R}^{(n)}$ is an $\mathbb{F}_{p}$-symmetric bilinear form on $\mathbb{F}_{q^{n}}$, with associated quadratic form
\begin{equation*}
\begin{array}{cccc}
Q_{R}^{(n)}:&\mathbb{F}_{q^{n}}&\rightarrow&\mathbb{F}_{p}\\
&\alpha&\mapsto&\text{Tr}_{\mathbb{F}_{q^{n}}/\mathbb{F}_{p}}(\alpha R(\alpha))
\end{array},
\end{equation*}
which satisfies $Q_{R}^{(n)}(\gamma \alpha) = \gamma^{2}Q_{R}^{(n)}(\alpha)$ for all $\gamma\in\mathbb{F}_{p}$, and also
\begin{equation*}
B_{R}^{(n)}(\alpha,\beta)=\frac{1}{2}Q_{R}^{(n)}(\alpha+\beta)-\frac{1}{2}Q_{R}^{(n)}(\alpha)-\frac{1}{2}Q_{R}^{(n)}(\beta)
\end{equation*}
for all $(\alpha,\beta)\in\mathbb{F}_{q^{n}}^{2}$. 

Considering the radical (kernel) of $B_{R}^{(n)}$
\begin{equation*}
W_{R}^{(n)}=\bigg\{\alpha\in\mathbb{F}_{q^{n}}\,\,:\,\, B_{R}^{(n)}(\alpha,\beta)=0 \text{ for each }\beta\in\mathbb{F}_{q^{n}}\bigg\},
\end{equation*}
which is an $\mathbb{F}_{p}$-linear subspace of $\mathbb{F}_{q^{n}}$, the following expressions give the number of solutions in $\mathbb{F}_{q^{n}}^{2}$ of \eqref{Equation: number of solutions in Fqn of Yp-Y=XR(X)}.

\begin{proposition}[\citealp{BOUWetal}, Proposition $2.6$]
\label{Proposition: number of solutions as a function of the dimension}
The number of solutions in $\mathbb{F}_{q^n}^{2}$ of $\eqref{Equation: number of solutions in Fqn of Yp-Y=XR(X)}$ is
 \begin{enumerate}
 \item[$1.$] $q^{n}$, if $mn-\dim_{\mathbb{F}_{p}}(W_{R}^{(n)})$ is odd.
 \item[$2.$] $q^{n}\pm(p-1)p^{\dim_{\mathbb{F}_{p}}(W_{R}^{(n)})/2}q^{n/2}$, if $mn-\dim_{\mathbb{F}_{p}}(W_{R}^{(n)})$ is even.
 \end{enumerate}
\end{proposition}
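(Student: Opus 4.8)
The plan is to separate the two variables: first I would handle the $Y$-variable by a standard Artin--Schreier count, reducing everything to the number of zeros of the quadratic form $Q_{R}^{(n)}$ on $\mathbb{F}_{q^{n}}$ viewed as an $\mathbb{F}_{p}$-vector space, and then invoke the classical enumeration of zeros of quadratic forms over finite fields of odd characteristic.

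First I would note that the Artin--Schreier map $\wp\colon\mathbb{F}_{q^{n}}\to\mathbb{F}_{q^{n}}$, $\wp(y)=y^{p}-y$, is $\mathbb{F}_{p}$-linear with kernel $\mathbb{F}_{p}$, so its image has $\mathbb{F}_{p}$-codimension $1$; since $\text{Tr}_{\mathbb{F}_{q^{n}}/\mathbb{F}_{p}}(y^{p}-y)=0$ for every $y$, that image is exactly $\ker\big(\text{Tr}_{\mathbb{F}_{q^{n}}/\mathbb{F}_{p}}\big)$. Hence, for each fixed $x\in\mathbb{F}_{q^{n}}$, the equation $y^{p}-y=xR(x)$ has exactly $p$ solutions $y\in\mathbb{F}_{q^{n}}$ when $\text{Tr}_{\mathbb{F}_{q^{n}}/\mathbb{F}_{p}}(xR(x))=Q_{R}^{(n)}(x)=0$ and none otherwise. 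Therefore the number $N$ of solutions in $\mathbb{F}_{q^{n}}^{2}$ of \eqref{Equation: number of solutions in Fqn of Yp-Y=XR(X)} equals $p\cdot N_{0}$, where $N_{0}\coloneqq\#\{x\in\mathbb{F}_{q^{n}}\colon Q_{R}^{(n)}(x)=0\}$.

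Next I would use that $p$ is odd to bring in quadratic form theory over $\mathbb{F}_{p}$, on the space $\mathbb{F}_{q^{n}}$ of $\mathbb{F}_{p}$-dimension $mn$. From the polarization identity one gets $B_{R}^{(n)}(\alpha,\alpha)=Q_{R}^{(n)}(\alpha)$, so $Q_{R}^{(n)}$ vanishes on its radical $W_{R}^{(n)}$; moreover, for any $\mathbb{F}_{p}$-complement $U$ of $W_{R}^{(n)}$ one has $Q_{R}^{(n)}(u+w)=Q_{R}^{(n)}(u)$ for $u\in U$, $w\in W_{R}^{(n)}$, and $Q_{R}^{(n)}|_{U}$ is a nondegenerate quadratic form in $r\coloneqq mn-\dim_{\mathbb{F}_{p}}(W_{R}^{(n)})=\dim_{\mathbb{F}_{p}}U$ variables. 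Consequently $N_{0}=p^{\dim_{\mathbb{F}_{p}}(W_{R}^{(n)})}\cdot N_{0}'$, where $N_{0}'$ is the number of zeros of $Q_{R}^{(n)}|_{U}$. By the classical formula for the number of zeros of a quadratic form over a finite field of odd characteristic, $N_{0}'=p^{r-1}$ if $r$ is odd, and $N_{0}'=p^{r-1}+\upeta\big((-1)^{r/2}\Delta\big)(p-1)p^{r/2-1}$ if $r$ is even, where $\Delta\in\mathbb{F}_{p}^{\ast}/(\mathbb{F}_{p}^{\ast})^{2}$ is the discriminant of $Q_{R}^{(n)}|_{U}$ (well defined and independent of the chosen complement $U$).

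Finally I would assemble the pieces: multiplying by $p^{\dim_{\mathbb{F}_{p}}(W_{R}^{(n)})}$ and then by $p$, and using that $p^{\dim_{\mathbb{F}_{p}}(W_{R}^{(n)})/2}\,q^{n/2}=p^{(mn+\dim_{\mathbb{F}_{p}}(W_{R}^{(n)}))/2}$ is an integer precisely when $mn-\dim_{\mathbb{F}_{p}}(W_{R}^{(n)})$ is even, one obtains $N=q^{n}$ in the odd case and $N=q^{n}\pm(p-1)p^{\dim_{\mathbb{F}_{p}}(W_{R}^{(n)})/2}q^{n/2}$ in the even case, the sign being $\upeta\big((-1)^{r/2}\Delta\big)$; this is exactly the asserted dichotomy (recall $r=mn-\dim_{\mathbb{F}_{p}}(W_{R}^{(n)})$). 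The points requiring care are that $Q_{R}^{(n)}$ is annihilated by its radical and that the polarization identity holds — both genuinely use $p\neq 2$ through the factor $\tfrac12$ — together with the bookkeeping of parity and discriminant in the even case; since the statement does not pin down the sign, the computation of $\Delta$ can be deferred.
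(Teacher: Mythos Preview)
Your argument is correct and is precisely the standard one: reduce via the additive Hilbert~90 (the Artin--Schreier count) to the number of zeros of the quadratic form $Q_{R}^{(n)}$, then split off the radical and apply the classical formula for the number of zeros of a nondegenerate quadratic form over $\mathbb{F}_{p}$ in $r=mn-\dim_{\mathbb{F}_{p}}(W_{R}^{(n)})$ variables. The one delicate point you flag---that $Q_{R}^{(n)}$ vanishes identically on $W_{R}^{(n)}$ because $B_{R}^{(n)}(\alpha,\alpha)=Q_{R}^{(n)}(\alpha)$---is handled cleanly, and the final bookkeeping is right.

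Note, however, that the present paper does not supply its own proof of this proposition: it is quoted verbatim from \cite{BOUWetal}, Proposition~2.6, and used as a black box. Your write-up is essentially a reconstruction of that cited argument (the same Artin--Schreier reduction followed by the quadratic-form count), so there is nothing to compare against here beyond confirming that your approach matches the intended one.
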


The following result provides an important tool to determine the dimension of $W_{R}^{(n)}$.

\begin{proposition}[\citealp{BOUWetal}, Proposition $2.1$]
\label{Proposition: characterization of WRn}
$W_{R}^{(n)}$ consists of the roots in $\mathbb{F}_{q^{n}}$ of the polynomial $$E_R(T)=R(T)^{p^k}+\sum_{i=0}^{k}(\alpha_{i}T)^{p^{k-i}}.$$ 
\end{proposition}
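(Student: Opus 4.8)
The plan is to unwind the definition of the radical $W_R^{(n)}$ into a single polynomial identity using the $\mathbb{F}_p$-bilinearity of the trace pairing, and then pass from ``vanishing of a trace'' to ``vanishing of an additive polynomial'' via the standard fact that an $\mathbb{F}_{q^n}$-valued additive polynomial whose absolute trace is identically zero on $\mathbb{F}_{q^n}$ must itself vanish on $\mathbb{F}_{q^n}$. First I would fix $\alpha\in\mathbb{F}_{q^n}$ and rewrite the condition $B_R^{(n)}(\alpha,\beta)=0$ for all $\beta$ as
\begin{equation*}
\operatorname{Tr}_{\mathbb{F}_{q^n}/\mathbb{F}_p}\bigl(\alpha R(\beta)+\beta R(\alpha)\bigr)=0\quad\text{for all }\beta\in\mathbb{F}_{q^n}.
\end{equation*}
The key manipulation is that $\operatorname{Tr}_{\mathbb{F}_{q^n}/\mathbb{F}_p}(\alpha R(\beta))=\operatorname{Tr}_{\mathbb{F}_{q^n}/\mathbb{F}_p}\bigl(\beta\,\widehat{R}(\alpha)\bigr)$, where $\widehat{R}$ is the ``adjoint'' $p$-polynomial obtained from $R$ by the trace-duality $\operatorname{Tr}(x\cdot \gamma^{p^j})=\operatorname{Tr}(x^{p^{-j}}\gamma)=\operatorname{Tr}(x^{p^{n m-j}}\gamma)$; concretely, since $\operatorname{Tr}_{\mathbb{F}_{q^n}/\mathbb{F}_p}(u^p)=\operatorname{Tr}_{\mathbb{F}_{q^n}/\mathbb{F}_p}(u)$, one has $\operatorname{Tr}(\alpha(\alpha_i\beta^{p^i}))=\operatorname{Tr}\bigl((\alpha_i^{p^{k-i}}\alpha^{p^{k-i}})^{?}\,\beta\bigr)$ after raising to an appropriate $p$-power; tracking the exponents carefully shows that the condition becomes $\operatorname{Tr}_{\mathbb{F}_{q^n}/\mathbb{F}_p}\!\bigl(\beta\cdot E_R^{(0)}(\alpha)\bigr)=0$ for all $\beta$, where $E_R^{(0)}(T)$ is $E_R(T)$ up to an overall $p$-power automorphism of the argument.

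Second, I would invoke the nondegeneracy of the trace pairing on $\mathbb{F}_{q^n}$: the map $(\beta,\delta)\mapsto \operatorname{Tr}_{\mathbb{F}_{q^n}/\mathbb{F}_p}(\beta\delta)$ is a perfect $\mathbb{F}_p$-bilinear form, so $\operatorname{Tr}_{\mathbb{F}_{q^n}/\mathbb{F}_p}(\beta\delta)=0$ for all $\beta\in\mathbb{F}_{q^n}$ forces $\delta=0$. Applying this with $\delta=E_R^{(0)}(\alpha)$ gives that $\alpha\in W_R^{(n)}$ if and only if $E_R^{(0)}(\alpha)=0$, i.e. $\alpha$ is a root of $E_R^{(0)}(T)$ in $\mathbb{F}_{q^n}$. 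Since raising $E_R^{(0)}$ to a suitable $p$-power (equivalently, composing with the Frobenius, which is a bijection on $\mathbb{F}_{q^n}$) yields exactly $E_R(T)=R(T)^{p^k}+\sum_{i=0}^k(\alpha_i T)^{p^{k-i}}$, the roots of $E_R^{(0)}$ in $\mathbb{F}_{q^n}$ coincide with those of $E_R$; here I would double-check that $E_R$ as written is $\bigl(\sum \alpha_i T^{p^i}\bigr)^{p^k}+\sum (\alpha_i T)^{p^{k-i}}=\sum_i \alpha_i^{p^k}T^{p^{i+k}}+\sum_i \alpha_i^{p^{k-i}}T^{p^{k-i}}$, which is visibly the $p^k$-th power of $R(T)+\sum_i \alpha_i^{p^{-i}}T^{p^{-i}}$ formally, confirming that dividing out the Frobenius twist lands on an additive polynomial with the claimed root set.

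The main obstacle is the bookkeeping in the first step: keeping track of which power of $p$ each term must be raised to when moving $\alpha$ past $R(\cdot)$ under the trace, and checking that the two sums $\alpha R(\beta)$ and $\beta R(\alpha)$ recombine into a single additive polynomial evaluated at $\alpha$ (this is where the symmetry of $B_R^{(n)}$ and the self-adjointness built into the $\tfrac12(\cdot)$ normalization pay off — the $\beta R(\alpha)$ piece contributes $R(\alpha)$ and the $\alpha R(\beta)$ piece contributes $\sum_i(\alpha_i\alpha)^{p^{-i}}$, and after a common Frobenius twist these add to give $R(\alpha)^{p^k}+\sum_i(\alpha_i\alpha)^{p^{k-i}}=E_R(\alpha)$). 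Everything else — the genus and branch data are not needed here, and the nondegeneracy of the trace form is standard — is routine. I would also remark that $E_R$ has degree $p^{2k}$ in $T$, so a priori $\dim_{\mathbb{F}_p} W_R^{(n)}\le 2k$, which is consistent with $W_R^{(n)}$ being the radical of a bilinear form on a space whose relevant ``support'' is governed by $R$; this sanity check is worth stating but not essential to the argument.
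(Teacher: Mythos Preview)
Your approach is correct and is the standard one: rewrite $\operatorname{Tr}_{\mathbb{F}_{q^n}/\mathbb{F}_p}(\alpha R(\beta)+\beta R(\alpha))$ via Frobenius-invariance of the trace as $\operatorname{Tr}_{\mathbb{F}_{q^n}/\mathbb{F}_p}\bigl(\beta\cdot c(\alpha)\bigr)$ with $c(\alpha)=R(\alpha)+\sum_{i}(\alpha_i\alpha)^{p^{-i}}$, invoke nondegeneracy of the trace pairing to get $c(\alpha)=0$, and raise to the $p^{k}$-th power to obtain $E_R(\alpha)=0$. The paper does not supply its own proof of this proposition --- it is quoted from \cite{BOUWetal} --- and your argument matches the proof given there.

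One presentational remark: your ``main obstacle'' paragraph already contains the clean computation, so the earlier hedged passage (``tracking the exponents carefully shows\ldots'', with the stray ``$?$'' exponent) should simply be replaced by it. Concretely, $\operatorname{Tr}(\alpha_i\alpha\beta^{p^i})=\operatorname{Tr}\bigl((\alpha_i\alpha)^{p^{mn-i}}\beta\bigr)$, so the full condition reads $\operatorname{Tr}\bigl(\beta\,[R(\alpha)+\sum_i(\alpha_i\alpha)^{p^{mn-i}}]\bigr)=0$ for all $\beta$; then $p^k$-th powering of the bracket gives $E_R(\alpha)$ on the nose, with no intermediate polynomial $E_R^{(0)}$ needed.
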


\subsection{The Curve $\mathcal{G}_{\mathcal{S}}$}\label{Subsection: The curve GS}

Let $\mathcal{G}_{\mathcal{S}}$ be given as in \eqref{Equation: generalized Suzuki curve}. From \cite[Theorem $1$]{PS}, \cite[Proposition $4.1$]{GS}, and a straightforward calculation, the following holds.

\begin{proposition}
\label{Proposition: properties of the plane generalized Suzuki curve}
$\mathcal{G}_{\mathcal{S}}$ is a geometrically irreducible plane curve of genus $g$ given by
\begin{equation*}
g=\frac{q_{0}(q-1)}{2}.
\end{equation*}
Further, $P=(0:1:0)\in\mathbb{P}^{2}(\overline{\mathbb{F}_{q}})$ is the unique point at infinity of $\mathcal{G}_{\mathcal{S}}$, which is also its only singular point, having multiplicity equal to $q_{0}$.
\end{proposition}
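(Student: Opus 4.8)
The plan is to prove Proposition \ref{Proposition: properties of the plane generalized Suzuki curve} by reducing everything to the theory of Artin--Schreier curves of the form $Y^p - Y = XR(X)$ developed in Subsection \ref{Subsection: On the number of Fqn-rational points of Yp-Y=XR(X)}, since the defining equation $Y^q - Y = X^{q_0}(X^q - X)$ is already ``close'' to that shape. First I would recall the chain of additive isogenies: writing $q = p^m$, the operator $Y \mapsto Y^q - Y$ factors as $\phi^{m} - 1 = (\phi - 1)(\phi^{m-1} + \cdots + \phi + 1)$ where $\phi$ is the $p$-power Frobenius, and this exhibits the curve $\mathcal{G}_{\mathcal{S}}$ as a tower over the intermediate Artin--Schreier curve whose function field is generated by $y$ with $y^p - y = $ (the relevant $p$-polynomial in $x$). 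Alternatively, and more directly, I would invoke \cite[Theorem $1$]{PS} and \cite[Proposition $4.1$]{GS} as cited, which already establish geometric irreducibility and the genus formula $g = q_0(q-1)/2$; the role of this proof is then mainly to assemble those citations with the ``straightforward calculation'' that identifies the point at infinity, its uniqueness, its multiplicity, and the fact that it is the only singular point.

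The concrete steps I would carry out are as follows. Homogenize the affine equation $Y^q - Y = X^{q_0}(X^q - X)$ to a curve in $\mathbb{P}^2$: with coordinates $(X:Y:Z)$ the top-degree terms are $Y^q Z^{?}$ versus $X^{q_0+q}$, so the curve has degree $d = q_0 + q$ and the homogenization is $Y^q Z^{q_0} - Y Z^{q_0 + q - 1} = X^{q_0 + q} - X^{q_0 + 1} Z^{q-1}$. Intersecting with the line at infinity $Z = 0$ gives $X^{q_0 + q} = 0$, so $P = (0:1:0)$ is the unique point at infinity, with intersection multiplicity $q_0 + q = d$, confirming there are no other branches at infinity in the affine chart $Y \neq 0$; one checks the affine part (chart $Z \neq 0$) is smooth because $\partial/\partial Y$ of $Y^q - Y - X^{q_0}(X^q - X)$ equals $-1 \neq 0$, so all affine points are nonsingular. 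Then, passing to the chart $Y = 1$, i.e. setting $u = X/Y$, $v = Z/Y$, the local equation near $P$ becomes $v^{q_0} - v^{q_0+q-1} = u^{q_0+q} - u^{q_0+1}v^{q-1}$; the lowest-degree term is $v^{q_0}$ (degree $q_0$) against $u^{q_0+1}$ (degree $q_0+1$), so $P$ has multiplicity exactly $q_0$. That $P$ is the center of a single branch, and that this branch is $\mathbb{F}_q$-rational, follows because the tangent cone $v^{q_0} = 0$ is a single line with multiplicity $q_0$ and a Newton--puiseux / Hamburger--Noether analysis (or direct appeal to Proposition \ref{Proposition: properties of the plane curve FR}, whose statement is structurally identical) shows the branch is unibranch and defined over $\mathbb{F}_q$.

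For the genus, rather than redo the Hamburger--Noether computation I would lean on the cited results: \cite[Theorem $1$]{PS} and \cite[Proposition $4.1$]{GS} give geometric irreducibility and $g = q_0(q-1)/2$ directly. As a sanity check one can verify this via the genus--degree formula for a plane curve with one singular point: $g = \binom{d-1}{2} - \delta_P$ where $d = q_0 + q$, which forces $\delta_P = \binom{q_0+q-1}{2} - q_0(q-1)/2$, and this matches the $\delta$-invariant one computes from the single branch of multiplicity $q_0$ at $P$ (the branch has a high contact order with the line at infinity coming from the gap between the exponents $q_0$ and $q_0+1$ in the local equation). I expect the main obstacle to be the branch analysis at $P$ — proving unibranch-ness and $\mathbb{F}_q$-rationality of the branch cleanly — but this is precisely the content of Proposition \ref{Proposition: properties of the plane curve FR} for the model $Y^p - Y = XR(X)$, and since $\mathcal{G}_{\mathcal{S}}$ sits in a tower of such curves (via the factorization $\phi^m - 1 = (\phi-1)(\phi^{m-1}+\cdots+1)$, so that after the degree-$p$ subextension the remaining cover is unramified away from $P$ in a controlled way), the same argument applies verbatim. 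Everything else is the ``straightforward calculation'' the proposition refers to.
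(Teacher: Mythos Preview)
Your approach matches the paper's: cite \cite{PS} and \cite{GS} for irreducibility and the genus, then carry out the direct calculation (homogenize, intersect with $Z=0$, check affine smoothness via $\partial/\partial Y = -1$, and read off the multiplicity from the initial form at $P$). One small correction: in the chart $Y=1$ there is no term $u^{q_0+1}$ --- the local equation is $v^{q_0} - v^{q_0+q-1} - u^{q_0+q} + u^{q_0+1}v^{q-1} = 0$, whose initial form is just $v^{q_0}$ --- but this does not affect your (correct) conclusion that the multiplicity is $q_0$; also, the unibranch claim is not part of this proposition but of Corollary~\ref{Corollary: P is the center of a unique branch}, so you need not argue it here.
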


Considering the nonsingular model $\mathcal{X}_{\mathcal{G}_{\mathcal{S}}}$ of $\mathcal{G}_{\mathcal{S}}$, let $x,y\in\overline{\mathbb{F}_{q}}(\mathcal{X}_{\mathcal{G}_{\mathcal{S}}})$ be such that $\overline{\mathbb{F}_{q}}(\mathcal{X}_{\mathcal{G}_{\mathcal{S}}})=\overline{\mathbb{F}_{q}}(x,y)$ and $y^{q}-y=x^{q_{0}}(x^{q}-x)$. Then, the following occurs.

\begin{proposition}[\citealp{DEOLALIKAR}, Theorem $3.3$]
\label{Proposition: totally ramified place}
The extension $\overline{\mathbb{F}_{q}}(x,y)/\overline{\mathbb{F}_{q}}(x)$ is a Galois extension of degree $q$. Moreover, the only ramified place of $\overline{\mathbb{F}_{q}}(x)$ is the infinite place $\mathcal{P}_{\infty}$, which is totally ramified in $\overline{\mathbb{F}_{q}}(x,y)$.
\end{proposition}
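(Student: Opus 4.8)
The plan is to exhibit $\overline{\mathbb{F}_q}(x,y)/\overline{\mathbb{F}_q}(x)$ as an Artin--Schreier-type extension and apply the standard ramification theory for such covers. First I would observe that the defining relation $y^q - y = x^{q_0}(x^q-x)$ expresses $\overline{\mathbb{F}_q}(x,y)$ as the fixed field data of an additive (elementary abelian) extension: writing $f(x) = x^{q_0}(x^q-x) \in \overline{\mathbb{F}_q}(x)$ and letting $\wp(y) = y^q - y$ denote the $\mathbb{F}_q$-linear additive polynomial, the extension is $\overline{\mathbb{F}_q}(x)[y]/(\wp(y)-f(x))$. Since $\wp$ is separable (its derivative is $-1$) and its kernel is exactly $\mathbb{F}_q$, this is a Galois extension whose group is a subgroup of $(\mathbb{F}_q,+)$; to get the full degree $q$ I must check that $f(x)$ does not lie in the image $\wp(\overline{\mathbb{F}_q}(x)) + (\text{shifts by }\mathbb{F}_q)$ in a way that would collapse the degree — equivalently, that the polynomial $\wp(T) - f(x)$ is irreducible over $\overline{\mathbb{F}_q}(x)$. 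This follows because $f$ has a pole of order $q_0 + q$ at $\mathcal{P}_\infty$, and $\gcd(q_0+q, p) = 1$ (as $q_0 + q = p^t + p^{2t-1} = p^t(1 + p^{t-1})$ has $p$-adic valuation exactly $t$, wait — this needs the pole order to be prime to $p$ after removing any $\wp$-reducible part); the cleaner route is to use that the pole order $q_0 + q$ at infinity is not of the form $p \cdot (\text{integer})$ once we note $v_p(q_0+q) = t$, so by repeatedly subtracting $\wp$-images we may assume the conductor exponent at $\mathcal{P}_\infty$ is prime to $p$, forcing the extension to be non-trivial and in fact of full degree $q$ since no proper subextension can absorb the pole.

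Second, for the ramification: at any place $\mathcal{P}$ of $\overline{\mathbb{F}_q}(x)$ corresponding to a finite point $x = a \in \overline{\mathbb{F}_q}$, the element $f(x)$ has no pole, so $f(a)$ is a well-defined value and $\wp(T) - f(a)$ splits completely over $\overline{\mathbb{F}_q}$ (the residue field is algebraically closed, so every element of the form $y^q - y$ is hit); hence every finite place is unramified. At $\mathcal{P}_\infty$, the pole order of $f(x)$ is $q_0 + q$, which is prime to $p$ after normalizing the cover by subtracting suitable $\wp$-images of elements of $\overline{\mathbb{F}_q}(x)$ (one removes the terms $x^q$ inside, which is $\wp$ of nothing directly, but $x^{q_0+q}$ contributes the dominant pole and its exponent has $p$-valuation $t \geq 1$; nonetheless the standard reduction shows the relevant conductor is $q_0 + q$ or an associate prime to $p$) — so $\mathcal{P}_\infty$ is wildly and totally ramified in $\overline{\mathbb{F}_q}(x,y)$. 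I would phrase this via \cite[III.7.8 and III.7.10]{Stichtenoth} (Artin--Schreier/Hasse–Arf style), which directly gives: a place is totally ramified iff the local "conductor" is positive and prime to $p$ after reduction, unramified otherwise. Since the genus formula in Proposition \ref{Proposition: properties of the plane generalized Suzuki curve} is $g = q_0(q-1)/2$, I would cross-check consistency by the Riemann--Hurwitz / conductor-discriminant formula: with a single totally ramified place of conductor $d$, one gets $2g - 2 = q(-2) + (q-1)(d+1)$, and solving $q_0(q-1) - 2 = -2q + (q-1)(d+1)$ yields $d = q_0 + q - 1$ — wait, this must be checked, but it confirms $d$ is prime to $p$ precisely when $v_p(q_0+q-1)=0$, which holds since $q_0 + q - 1 \equiv -1 \pmod p$. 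This genus check is the reassuring cross-validation, not the proof itself.

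The main obstacle I anticipate is the bookkeeping at $\mathcal{P}_\infty$: one must be careful that $f(x) = x^{q_0+q} - x^{q_0+1}$ cannot be written as $\wp(u) + c$ for $u \in \overline{\mathbb{F}_q}(x)$ and $c$ a constant in a way that lowers the pole order below something prime to $p$ — i.e., verifying the extension is genuinely of degree $q$ and wildly ramified rather than a smaller Artin--Schreier tower with tame or trivial behavior. The safe way to handle this is to invoke the cited result \cite[Theorem $3.3$]{DEOLALIKAR} essentially verbatim, or to argue directly that since $\gcd(q_0 + q, \, q_0 + 1) $ and the $p$-structure of these exponents force the "reduced" pole order to be $q_0 + q$ (whose prime-to-$p$ part is nontrivial because $q_0 + q = p^t(1+p^{t-1})$ and $1 + p^{t-1}$ is prime to $p$), the full additive group $(\mathbb{F}_q, +)$ acts and $\mathcal{P}_\infty$ is totally ramified. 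Everything else — Galois-ness, the additive structure of the Galois group, and unramifiedness of the finite places — is routine Artin--Schreier theory and needs only to be stated cleanly.
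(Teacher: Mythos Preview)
The paper does not prove this proposition; it is quoted from \cite{DEOLALIKAR} without argument, so there is no in-paper proof to compare against. Your outline via additive (Artin--Schreier) extensions is the right framework, and the unramifiedness of finite places is correctly handled: $f(x)=x^{q_0}(x^q-x)$ is regular there and the residue field is algebraically closed. The substantive gap is exactly where you flag it, at $\mathcal{P}_\infty$. You observe that the pole order $q_0+q=p^t(1+p^{t-1})$ has $p$-adic valuation $t\geqslant 1$, so the criterion ``pole order prime to $p$ forces total ramification'' does not apply as stated. Your proposed remedy of ``repeatedly subtracting $\wp$-images'' with $\wp(u)=u^q-u$ does not actually work: to cancel the leading term $x^{q_0+q}$ one would need $u$ with pole order $(q_0+q)/q=1+p^{-(t-1)}$, which is not an integer for $t>1$, and no such subtraction lowers the pole order.

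The fix is to pass to the degree-$p$ subextensions $y_\alpha^p-y_\alpha=\alpha f(x)$ of Proposition~\ref{Proposition: description of elementary abelian p-extensions containing finite fields} and reduce there using $u^p-u$; as carried out in Lemma~\ref{Lemma: number of points on XGS as a function of the number of points on tilX1}, one obtains a generator satisfying $z^p-z=x(x^{q/q_0}-x^{q_0})$, whose pole order $q_0+1$ is prime to $p$. Thus every degree-$p$ subextension is totally ramified at $\mathcal{P}_\infty$, so the inertia group at any place above $\mathcal{P}_\infty$ lies in no index-$p$ subgroup of $(\mathbb{F}_q,+)$ and must therefore be the whole group. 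Incidentally, your Riemann--Hurwitz cross-check contains an arithmetic slip: solving $q_0(q-1)-2=-2q+(q-1)(d+1)$ gives $d=q_0+1$, not $q_0+q-1$, which is consistent with the reduced conductor just described.
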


Let $Q_{\infty}$ be the $\mathbb{F}_{q}$-rational point of $\mathcal{X}_{\mathcal{G}_{\mathcal{S}}}$ corresponding to the unique place $\mathcal{Q}_{\infty}$ of $\overline{\mathbb{F}_{q}}(x,y)$ lying over the infinity place $\mathcal{P}_{\infty}$ of $\overline{\mathbb{F}_{q}}(x)$. Then, the following holds.

\begin{corollary}
\label{Corollary: P is the center of a unique branch}
$P=(0:1:0)$ is the center of a unique $\mathbb{F}_{q}$-rational branch of $\mathcal{G}_{\mathcal{S}}$, namely the branch of $\mathcal{G}_{\mathcal{S}}$ corresponding to the point $Q_{\infty}\in\mathcal{X}_{\mathcal{G}_{\mathcal{S}}}$.
\end{corollary}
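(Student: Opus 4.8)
The plan is to deduce this corollary from Proposition~\ref{Proposition: properties of the plane generalized Suzuki curve} together with Proposition~\ref{Proposition: totally ramified place}. First I would recall that, by Proposition~\ref{Proposition: properties of the plane generalized Suzuki curve}, the point $P=(0:1:0)$ is the unique point at infinity of $\mathcal{G}_{\mathcal{S}}$, hence the only point of $\mathcal{G}_{\mathcal{S}}$ lying over the infinite place $\mathcal{P}_{\infty}$ of $\overline{\mathbb{F}_{q}}(x)$; consequently every place of $\overline{\mathbb{F}_{q}}(x,y)=\overline{\mathbb{F}_{q}}(\mathcal{X}_{\mathcal{G}_{\mathcal{S}}})$ lying over $\mathcal{P}_{\infty}$ must be centered at $P$, i.e.\ corresponds to a branch of $\mathcal{G}_{\mathcal{S}}$ centered at $P$. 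Conversely, any branch of $\mathcal{G}_{\mathcal{S}}$ centered at $P$ has $x$ having a pole there (since $P$ is at infinity), so it corresponds to a place over $\mathcal{P}_{\infty}$. Thus branches of $\mathcal{G}_{\mathcal{S}}$ centered at $P$ are in bijection with places of $\overline{\mathbb{F}_{q}}(x,y)$ lying over $\mathcal{P}_{\infty}$.

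Next I would invoke Proposition~\ref{Proposition: totally ramified place}: the extension $\overline{\mathbb{F}_{q}}(x,y)/\overline{\mathbb{F}_{q}}(x)$ is Galois of degree $q$, and $\mathcal{P}_{\infty}$ is totally ramified, so there is exactly one place $\mathcal{Q}_{\infty}$ of $\overline{\mathbb{F}_{q}}(x,y)$ over it. By the bijection established above, $P$ is therefore the center of exactly one branch of $\mathcal{G}_{\mathcal{S}}$, namely the branch corresponding to the point $Q_{\infty}\in\mathcal{X}_{\mathcal{G}_{\mathcal{S}}}$ associated with $\mathcal{Q}_{\infty}$. It remains to check $\mathbb{F}_{q}$-rationality: since the defining equation \eqref{Equation: generalized Suzuki curve} has coefficients in $\mathbb{F}_{q}$ and $P$ is $\mathbb{F}_{q}$-rational, the set of branches at $P$ is stable under the Frobenius of $\overline{\mathbb{F}_{q}}/\mathbb{F}_{q}$; being a singleton, that branch is fixed by Frobenius and hence is $\mathbb{F}_{q}$-rational, so $Q_{\infty}$ is an $\mathbb{F}_{q}$-rational point (which also matches the hypothesis already recorded before the statement).

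There is essentially no serious obstacle here; the corollary is a bookkeeping consequence of the two propositions. The only point requiring a word of care is the passage between the geometric language of branches of the (singular) plane model $\mathcal{G}_{\mathcal{S}}$ centered at $P$ and the arithmetic language of places of the function field above $\mathcal{P}_{\infty}$ — one should make explicit that this correspondence is a bijection, using that $P$ is the unique point of $\mathcal{G}_{\mathcal{S}}$ above $\mathcal{P}_{\infty}$ (Proposition~\ref{Proposition: properties of the plane generalized Suzuki curve}). Once that is stated, total ramification in Proposition~\ref{Proposition: totally ramified place} immediately forces the uniqueness of the branch, and the Frobenius-stability argument gives its rationality.
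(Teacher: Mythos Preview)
Your proposal is correct and follows exactly the approach the paper intends: the corollary is stated without proof in the paper precisely because it is an immediate consequence of Proposition~\ref{Proposition: properties of the plane generalized Suzuki curve} (uniqueness of the point at infinity) and Proposition~\ref{Proposition: totally ramified place} (total ramification of $\mathcal{P}_{\infty}$), together with the sentence preceding the corollary that already declares $Q_{\infty}$ to be $\mathbb{F}_{q}$-rational. Your explicit spelling-out of the bijection between branches at $P$ and places over $\mathcal{P}_{\infty}$, and the Frobenius-stability argument for rationality, are exactly the details the paper is leaving to the reader.
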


This subsection ends with the following result.

\begin{proposition}[\citealp{PS}, Theorem $7$]
\label{Proposition: some Riemann-Roch spaces}
The sets $\{1,x\}$ and $\{1,x,y\}$ are bases for the Riemann-Roch spaces $\mathscr{L}(qQ_{\infty})$ and $\mathscr{L}((q+q_0)Q_{\infty})$, respectively.
\end{proposition}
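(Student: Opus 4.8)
The plan is to work entirely at the place $Q_\infty$ and to identify the two Riemann--Roch spaces with the initial part of the Weierstrass semigroup of $\mathcal{X}_{\mathcal{G}_{\mathcal{S}}}$ at $Q_\infty$. First I would pin down the pole orders of $x$ and $y$. Since by Proposition~\ref{Proposition: totally ramified place} the place $\mathcal{P}_\infty$ of $\overline{\mathbb{F}_q}(x)$ is totally ramified in $\overline{\mathbb{F}_q}(x,y)$ with ramification index $q$, and $x$ has a simple pole at $\mathcal{P}_\infty$, one gets $v_{Q_\infty}(x)=-q$. Feeding this into $y^{q}-y=x^{q_0}(x^{q}-x)=x^{q+q_0}-x^{q_0+1}$ and comparing dominant terms (the left side has pole order $q\cdot(-v_{Q_\infty}(y))$ and the right side pole order $(q+q_0)\cdot q$) forces $v_{Q_\infty}(y)=-(q+q_0)$.

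Next I would reduce the computation to the coordinate ring. By Proposition~\ref{Proposition: properties of the plane generalized Suzuki curve} the only singular/infinite point of $\mathcal{G}_{\mathcal{S}}$ is $P=(0:1:0)$, and by Corollary~\ref{Corollary: P is the center of a unique branch} it is the centre of the single branch corresponding to $Q_\infty$; hence $Q_\infty$ is the only place at which $x$ (equivalently $y$) has a pole, the affine curve is smooth, and the functions regular away from $Q_\infty$ are exactly $\overline{\mathbb{F}_q}[x,y]=\bigoplus_{b=0}^{q-1}\overline{\mathbb{F}_q}[x]\,y^{b}$, a free module of rank $q$ since $y$ satisfies the monic relation $T^{q}-T-x^{q+q_0}+x^{q_0+1}$ over $\overline{\mathbb{F}_q}[x]$. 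Thus $\mathscr{L}(nQ_\infty)=\{f\in\overline{\mathbb{F}_q}[x,y]:v_{Q_\infty}(f)\ge -n\}$, and the problem becomes the local one of deciding which pole orders $\le q+q_0$ are attained. The easy half is the lower bound: $1,x\in\mathscr{L}(qQ_\infty)$ and $1,x,y\in\mathscr{L}((q+q_0)Q_\infty)$ are $\overline{\mathbb{F}_q}$-linearly independent (the first two lie in $\overline{\mathbb{F}_q}(x)$, while $y$ does not), with pole orders $0,q$ and $0,q,q+q_0$; hence $\dim\mathscr{L}(qQ_\infty)\ge 2$ and $\dim\mathscr{L}((q+q_0)Q_\infty)\ge 3$.

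The heart of the proof, and the step I expect to be the main obstacle, is the matching upper bound: I must show that the only pole numbers (non-gaps) at $Q_\infty$ which are $\le q+q_0$ are $0$, $q$, and $q+q_0$; equivalently, that the Weierstrass semigroup $H$ at $Q_\infty$ has $q$ and $q+q_0$ as its two smallest positive elements. Writing $f=\sum_{b=0}^{q-1}f_b(x)y^{b}$, the summand $f_b(x)y^{b}$ has pole order $q\deg f_b+b(q+q_0)$, so a naive term-by-term count suggests only $b\in\{0,1\}$ with $\deg f_0\le 1$ and $\deg f_1\le 0$ can contribute something of pole order $\le q+q_0$. The genuine difficulty is that the valuations $-q\deg f_b-b(q+q_0)\equiv -b\,q_0\pmod q$ are \emph{not} distinct modulo $q$ (they coincide exactly when the indices differ by a multiple of $q/q_0=p^{t-1}$), so a priori cancellation among higher-degree terms could expose a smaller pole order and create a spurious non-gap in $(0,q)$ or $(q,q+q_0)$.

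To rule this out I would make $v_{Q_\infty}$ fully explicit by expanding the unique branch of $\mathcal{G}_{\mathcal{S}}$ centred at $P$. Homogenising and passing to the chart $Y=1$ gives $Z^{q_0}-Z^{q+q_0-1}=X^{q+q_0}-X^{q_0+1}Z^{q-1}$, whose lowest-order balance $Z^{q_0}\sim X^{q+q_0}$ produces a branch of multiplicity $q_0$ (consistent with Proposition~\ref{Proposition: properties of the plane generalized Suzuki curve}) with $v_{Q_\infty}(X)=q_0$ and $v_{Q_\infty}(Z)=q+q_0$, whence $v_{Q_\infty}(x)=v_{Q_\infty}(X/Z)=-q$ and $v_{Q_\infty}(y)=v_{Q_\infty}(1/Z)=-(q+q_0)$, recovering the pole orders above. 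A Hamburger--Noether/Puiseux expansion of this branch then determines $v_{Q_\infty}$ on all of $\overline{\mathbb{F}_q}[x,y]$ and shows that no cancellation can yield a pole order strictly between $0$ and $q$ or strictly between $q$ and $q+q_0$, giving $|H\cap[0,q]|=2$ and $|H\cap[0,q+q_0]|=3$, i.e. $\dim\mathscr{L}(qQ_\infty)=2$ and $\dim\mathscr{L}((q+q_0)Q_\infty)=3$. Combined with the lower bounds, $\{1,x\}$ and $\{1,x,y\}$ are bases of $\mathscr{L}(qQ_\infty)$ and $\mathscr{L}((q+q_0)Q_\infty)$, as claimed.
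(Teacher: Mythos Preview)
The paper does not supply its own proof of this proposition; it is quoted from \cite{PS}, Theorem~7, so there is no in-text argument to compare against and I evaluate your proposal on its own terms. Your outline is correct through the pole orders $v_{Q_\infty}(x)=-q$ and $v_{Q_\infty}(y)=-(q+q_0)$, the identification $\bigcup_n\mathscr{L}(nQ_\infty)=\overline{\mathbb{F}_q}[x,y]$, and the lower bounds; you also correctly isolate the real obstacle, namely that distinct monomials $x^ay^b$ (with $0\leqslant b\leqslant q-1$) can share a pole order and cancel.

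The gap is that your final paragraph does not resolve this obstacle: it merely \emph{asserts} that a Hamburger--Noether expansion rules out pole orders in $(0,q)\cup(q,q+q_0)$, which is precisely the content of the upper bound. Cancellation is not a phantom here. Since $\gcd(q,q+q_0)=q_0>1$, the semigroup $\langle q,\,q+q_0\rangle$ is contained in $q_0\mathbb{N}$ and hence has infinitely many gaps, whereas the Weierstrass semigroup at $Q_\infty$ has exactly $g$; thus the latter strictly contains $\langle q,\,q+q_0\rangle$, and the extra non-gaps are realized precisely by polynomials in $x,y$ whose top monomials cancel. What remains to be proved is that none of these extra non-gaps lies below $q+q_0$, and this needs a genuine argument---for instance, exhibiting for each $m\in\{1,\dots,q-1\}\cup\{q+1,\dots,q+q_0-1\}$ a holomorphic differential of order $m-1$ at $Q_\infty$, or determining the semigroup via auxiliary Artin--Schreier generators. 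As a warning that the issue is not vacuous: in the boundary case $t=1$ (so $q=q_0=p$) the element $\tilde y:=y-x^{2}$ satisfies $\tilde y^{\,p}-\tilde y=x^{2}-x^{p+1}$, whence $v_{Q_\infty}(\tilde y)=-(p+1)$ lies strictly between $-q$ and $-(q+q_0)$; hence $\dim\mathscr{L}((q+q_0)Q_\infty)\geqslant 4$ there and the asserted basis $\{1,x,y\}$ is too small. For $t\geqslant 2$ the statement is correct, but your proposed route does not yet establish it.
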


\subsection{Automorphism Group}\label{Subsection: Automorphism group}

Let $\mathcal{Y}$ be a curve of genus $g$ defined over the finite field $\mathbb{F}_{q}$. The automorphism group $\text{Aut}(\mathcal{Y})$ of $\mathcal{Y}$ is defined as the group of $\overline{\mathbb{F}_{q}}$-automorphisms of the function field $\overline{\mathbb{F}_{q}}(\mathcal{Y})$. 

Considering the action of $\text{Aut}(\mathcal{Y})$ on the points of $\mathcal{Y}$, the stabilizer of $\text{Aut}(\mathcal{Y})$ at $Q\in\mathcal{Y}$, denoted by $\text{Aut}_{Q}(\mathcal{Y})$, is the subgroup of $\text{Aut}(\mathcal{Y})$ consisting of all automorphisms fixing $Q$. Further, for each non-negative integer $i$, the $i$-th ramification group $\text{Aut}_{Q}^{(i)}(\mathcal{Y})$ at $Q$ is defined by
\begin{equation*}
\text{Aut}_{Q}^{(i)}(\mathcal{Y})=\bigg\{\sigma\in\text{Aut}_{Q}(\mathcal{Y})\,\,:\,\,v_{Q}(\sigma(\mathfrak{t})-\mathfrak{t})\geqslant i+1\bigg\},
\end{equation*}
where $v_{Q}$ is the discrete valuation associated to $Q$, $\mathfrak{t}\in\overline{\mathbb{F}_{q}}(\mathcal{Y})$ is a local parameter at $Q$, and $\text{Aut}_{Q}^{(i)}(\mathcal{Y})\supseteq\text{Aut}_{Q}^{(i+1)}(\mathcal{Y})$ for all $i\geqslant 0$. The following result summarizes properties of these subgroups.

\begin{theorem}[\citealp{HKT}, Lemma $11.44$ and Theorem $11.74$]
\label{Theorem: GQ is the semi-direct product of its Sylow p-subgroup and a normal cyclic group}
$\emph{Aut}_{Q}^{(0)}(\mathcal{Y})=\emph{Aut}_{Q}(\mathcal{Y})$. Moreover, $\emph{Aut}_{Q}^{(1)}(\mathcal{Y})$ is the unique Sylow $p$-subgroup of $\emph{Aut}_{Q}(\mathcal{Y})$, and $\emph{Aut}_{Q}(\mathcal{Y})=\emph{Aut}_{Q}^{(1)}(\mathcal{Y})\rtimes \mathbb{H}$, where $\mathbb{H}$ is a cyclic subgroup of $\emph{Aut}_{Q}(\mathcal{Y})$ of order coprime to $p$.
\end{theorem}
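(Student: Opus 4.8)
The plan is to write $G \coloneqq \mathrm{Aut}_{Q}(\mathcal{Y})$ and $G_{i} \coloneqq \mathrm{Aut}_{Q}^{(i)}(\mathcal{Y})$, and to analyze the ramification filtration $G = G_{0}\supseteq G_{1}\supseteq G_{2}\supseteq\cdots$ through its successive quotients. Recall that $G$ is finite (the curves of interest have genus $\geqslant 2$) and acts by $\overline{\mathbb{F}_{q}}$-algebra automorphisms on the valuation ring $\mathcal{O}_{Q}$, preserving $v_{Q}$, with maximal ideal $\mathfrak{m}_{Q}=(\mathfrak{t})$ and residue field $\mathcal{O}_{Q}/\mathfrak{m}_{Q}\cong\overline{\mathbb{F}_{q}}$. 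First I would dispose of the equality $G_{0}=G$. Every $\sigma\in G$ fixes $Q$, hence stabilizes $\mathfrak{m}_{Q}$, so $\sigma(\mathfrak{t})$ is again a uniformizer and both $\sigma(\mathfrak{t})$ and $\mathfrak{t}$ lie in $\mathfrak{m}_{Q}$; therefore $\sigma(\mathfrak{t})-\mathfrak{t}\in\mathfrak{m}_{Q}$, i.e. $v_{Q}(\sigma(\mathfrak{t})-\mathfrak{t})\geqslant 1$, whence $\sigma\in G_{0}$. Since $G_{0}\subseteq G$ by definition, this gives $G_{0}=G$.

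Next I would record that each $G_{i}$ is normal in $G$ and independent of the chosen local parameter $\mathfrak{t}$, and then identify the successive quotients via the standard local-ramification maps. For $i=0$, reduction modulo $\mathfrak{m}_{Q}$ of $\sigma(\mathfrak{t})/\mathfrak{t}$ (a unit of $\mathcal{O}_{Q}$, as both terms are uniformizers) defines an injective homomorphism $G_{0}/G_{1}\hookrightarrow\overline{\mathbb{F}_{q}}^{\ast}$; since every finite subgroup of the multiplicative group of a field is cyclic, and $\overline{\mathbb{F}_{q}}^{\ast}$ has no element of order $p$ in characteristic $p$, the quotient $G_{0}/G_{1}$ is cyclic of order prime to $p$. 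For $i\geqslant 1$, the element $(\sigma(\mathfrak{t})-\mathfrak{t})/\mathfrak{t}^{i+1}$ lies in $\mathcal{O}_{Q}$ (by $v_{Q}(\sigma(\mathfrak{t})-\mathfrak{t})\geqslant i+1$), and its reduction modulo $\mathfrak{m}_{Q}$ defines an injective homomorphism $G_{i}/G_{i+1}\hookrightarrow(\overline{\mathbb{F}_{q}},+)$ with kernel exactly $G_{i+1}$; as the additive group of a field of characteristic $p$ is an elementary abelian $p$-group, so is each $G_{i}/G_{i+1}$. Because $G$ is finite, the filtration terminates with $G_{N}=\{1\}$ for $N\gg 0$, and the subnormal series with elementary abelian $p$-quotients shows that $G_{1}$ is a $p$-group.

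Finally I would assemble the group-theoretic conclusion. Since $G_{1}\trianglelefteq G$ is a $p$-group and $[G:G_{1}]=|G_{0}/G_{1}|$ is prime to $p$, the subgroup $G_{1}$ is a Sylow $p$-subgroup of $G$; being normal, it is the unique one. The quotient $G/G_{1}\cong G_{0}/G_{1}$ is cyclic of order coprime to $p$, so $\gcd(|G_{1}|,[G:G_{1}])=1$, and the Schur--Zassenhaus theorem produces a complement $\mathbb{H}$ with $G=G_{1}\rtimes\mathbb{H}$ and $\mathbb{H}\cong G/G_{1}$ cyclic of order coprime to $p$. The main obstacle is the verification that the two reduction maps above are well-defined injective homomorphisms independent of the choice of $\mathfrak{t}$, equivalently that the $G_{i}$ are canonical and normal in $G$; this is the technical heart, resting on the cocycle computations of local ramification theory (cf. Serre, \emph{Local Fields}, Ch.\ IV), while the remaining deductions are formal once these quotient structures are in place.
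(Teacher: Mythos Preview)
The paper does not supply its own proof of this statement; it is quoted verbatim from the reference [HKT] (Lemma~11.44 and Theorem~11.74) as background material. Your argument is correct and is precisely the classical one: the injections $G_{0}/G_{1}\hookrightarrow\overline{\mathbb{F}_{q}}^{\ast}$ and $G_{i}/G_{i+1}\hookrightarrow(\overline{\mathbb{F}_{q}},+)$ for $i\geqslant 1$ via the maps $\sigma\mapsto\sigma(\mathfrak{t})/\mathfrak{t}\bmod\mathfrak{m}_{Q}$ and $\sigma\mapsto(\sigma(\mathfrak{t})-\mathfrak{t})/\mathfrak{t}^{i+1}\bmod\mathfrak{m}_{Q}$, followed by Schur--Zassenhaus to split the resulting short exact sequence. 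This is exactly the route taken in Serre's \emph{Local Fields}, Ch.~IV, and in [HKT], so there is nothing to compare.
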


The following result is used to prove Theorem \ref{Theorem: the automorphism group of XGS}.

\begin{theorem}[\citealp{HKT}, Theorem $11.140$]
\label{Theorem: sufficient condition for the stabilizer of a point be the entire group of automorphisms}
Suppose that $g\geqslant 2$, and let $Q\in\mathcal{Y}$ be a point satisfying
\begin{equation*}
|\emph{Aut}_{Q}^{(1)}(\mathcal{Y})|> 2g+1.
\end{equation*}
Then, one of the following cases occurs:
\begin{enumerate}
\item[$1.$] $\emph{Aut}(\mathcal{Y})=\emph{Aut}_{Q}(\mathcal{Y})$.
\item[$2.$] $\mathcal{Y}$ is birationally equivalent to one of the following curves:
\begin{enumerate}
\item[\emph{a)}] the Hermitian curve
\begin{equation}
\label{Equation: the Hermitian curve}
Y^{\ell}+Y=X^{\ell+1}
\end{equation}
where $p\geqslant 2$ and $\ell=p^{k}$.
\item[\emph{b)}] the Suzuki curve, given by the nonsingular model of 
\begin{equation}
\label{Equation: the Suzuki curve}
Y^{\ell}+Y=X^{\ell_{0}}(X^{\ell}+X),
\end{equation}
where $p=2$, $\ell_{0}=2^{k}$, and $\ell=2\ell_{0}^{2}$.
\item[\emph{c)}] the Ree curve, given by the nonsingular model of
\begin{equation}
\label{Equation: the Ree curve}
Y^{\ell^{2}}=[1+(X^{\ell}-X)^{\ell-1}]Y^{\ell}-(X^{\ell}-X)^{\ell-1}Y+X^{\ell}(X^{\ell}-X)^{\ell+3\ell_{0}},
\end{equation}
where $p=3$, $\ell_{0}=3^{k}$, and $\ell=3\ell_{0}^{2}$.
\end{enumerate}
\end{enumerate}
\end{theorem}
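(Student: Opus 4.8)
The plan is to study the action of $G\coloneqq\text{Aut}(\mathcal{Y})$ on $\mathcal{Y}$ through the distinguished $p$-subgroup $S\coloneqq\text{Aut}_{Q}^{(1)}(\mathcal{Y})$, whose order exceeds $2g+1$ by hypothesis. Writing $G_{Q}\coloneqq\text{Aut}_{Q}(\mathcal{Y})$ for the stabiliser, Theorem~\ref{Theorem: GQ is the semi-direct product of its Sylow p-subgroup and a normal cyclic group} gives $G_{Q}=S\rtimes\mathbb{H}$ with $S$ the unique, hence normal, Sylow $p$-subgroup of $G_{Q}$ and $\mathbb{H}$ cyclic of order prime to $p$. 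The whole dichotomy turns on a single rigidity statement that I would establish first: $Q$ is the \emph{only} point of $\mathcal{Y}$ fixed by $S$.

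I would prove this uniqueness by Riemann--Hurwitz applied to the quotient $\phi\colon\mathcal{Y}\to\mathcal{Y}/S$ of degree $|S|$. Because $S$ is a $p$-group in characteristic $p$, every point it fixes is totally and wildly ramified, so the zeroth and first ramification groups of $S$ at such a point both coincide with $S$; consequently the different exponent there is at least $(|S|-1)+(|S|-1)=2(|S|-1)$. If $S$ fixed two distinct points, the different of $\phi$ would have degree at least $4(|S|-1)$, and Riemann--Hurwitz would give $2g-2\geqslant -2|S|+4(|S|-1)$, that is, $g\geqslant |S|-1$. This contradicts $|S|>2g+1$, which forces $|S|-1>2g$; hence $S$ fixes only $Q$. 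From this I deduce $N_{G}(S)=G_{Q}$: any $\sigma\in N_{G}(S)$ sends the fixed-point set $\{Q\}$ of $S$ to that of $\sigma S\sigma^{-1}=S$, namely $\{Q\}$, so $\sigma\in G_{Q}$; the reverse inclusion holds since $S\trianglelefteq G_{Q}$.

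Therefore the $G$-orbit of $Q$ has length $[G:G_{Q}]=[G:N_{G}(S)]$, equal to the number of conjugates of $S$, and each conjugate $\sigma S\sigma^{-1}$ is the wild inertia group at the distinct point $\sigma Q$. This yields exactly the required dichotomy. If the orbit is a single point, then $G=G_{Q}=\text{Aut}_{Q}(\mathcal{Y})$, which is case~$1$. Otherwise $\mathcal{Y}$ carries at least two points whose wild inertia groups are conjugate to $S$, hence of order $>2g+1$.

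It remains to show that this second alternative forces $\mathcal{Y}$ to be one of the three listed curves, and here lies the genuine difficulty. In this case $G$ permutes the orbit $\Omega$ of $Q$, a set of at least two points at each of which the wild inertia is a conjugate of $S$, with point stabiliser $G_{Q}=S\rtimes\mathbb{H}$. I would analyse this configuration by combining Riemann--Hurwitz for $\mathcal{Y}\to\mathcal{Y}/G$ with the earlier bounds on the higher ramification groups and on the tame complement $\mathbb{H}$, so as to place $(\mathcal{Y},G)$ in the extremal range governed by the Henn--Stichtenoth classification of curves with very large automorphism group (cf. \cite[Theorem~$11.127$]{HKT}). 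The crux is then to show that a pair of wild inertia groups this large, meeting in the prescribed way and normalised by a cyclic tame complement, can arise only from the Deligne--Lusztig configuration attached to $\mathrm{PGU}_{3}$, $\mathrm{Sz}$, or $\mathrm{Ree}$, and finally to match the genus and the order of $G$ against the invariants of the Hermitian curve~\eqref{Equation: the Hermitian curve}, the Suzuki curve~\eqref{Equation: the Suzuki curve}, and the Ree curve~\eqref{Equation: the Ree curve}. Ruling out every other possibility, rather than merely bounding $|G|$, is the deep ingredient, and it rests on the cumulative ramification-theoretic and group-theoretic analysis developed earlier in \cite[Chapter~$11$]{HKT}.
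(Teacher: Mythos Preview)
The paper does not prove this statement at all: Theorem~\ref{Theorem: sufficient condition for the stabilizer of a point be the entire group of automorphisms} is quoted verbatim from \cite[Theorem~$11.140$]{HKT} as a preliminary result and is used as a black box in Section~\ref{Section: Proof of Theorem 3}. There is therefore no ``paper's own proof'' to compare your proposal against.

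That said, your outline is a faithful sketch of the argument one finds in \cite[Chapter~$11$]{HKT}. The rigidity step---showing via Riemann--Hurwitz that a $p$-group $S$ with $|S|>2g+1$ fixes a unique point, whence $N_{G}(S)=G_{Q}$ and the dichotomy follows---is correct and is exactly the pivot of the proof. Your different-exponent bound $2(|S|-1)$ at a totally wildly ramified point is the right lower estimate, and the ensuing inequality $g\geqslant|S|-1$ yields the desired contradiction. Where you honestly flag the ``deep ingredient'' is also where the real work lies in \cite{HKT}: once the orbit of $Q$ has at least two points, one has to run the full Henn--Stichtenoth style classification (bounding $|G|$, analysing the structure of $G$ and its two-point stabilisers, and matching against the Deligne--Lusztig curves), and this is not something one can compress into a paragraph. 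Your proposal is thus a correct high-level roadmap rather than a complete proof, which is appropriate given that the paper itself treats the result as an external citation.
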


\section{Proof of Theorem \ref{Theorem: the number of Fqn rational points on XGS}}\label{Section: Proof of Theorem 1}

\begin{proof}[\unskip\nopunct]
Considering the notation introduced in Subsection \ref{Subsection: The curve GS}, then Theorem \ref{Theorem: the number of Fqn rational points on XGS} is proved using a sequence of steps.

\subsection{Elementary Abelian $p$-Extension\label{Subsection: Step 1 - elementary abelian p-extension}}

The following result summarizes properties of the extension $\overline{\mathbb{F}_{q}}(x,y)/\overline{\mathbb{F}_{q}}(x)$.

\begin{proposition}[\citealp{GS}, Propositions $1.1$ and $1.2$]
\label{Proposition: description of elementary abelian p-extensions containing finite fields}
$\overline{\mathbb{F}_{q}}(x,y)/\overline{\mathbb{F}_{q}}(x)$ is an elementary abelian $p$-extension of degree $q$. The set of intermediate fields $\overline{\mathbb{F}_{q}}(x)\subseteq E\subseteq\overline{\mathbb{F}_{q}}(x,y)$ such that $[E:\overline{\mathbb{F}_{q}}(x)]=p$ is given by
\begin{equation*}
\bigg\{E_{\alpha}\,\,:\,\,\alpha\in\mathbb{F}_{q}^{\ast}\bigg\},
\end{equation*}
where for each $\alpha\in\mathbb{F}_{q}^{\ast}$, $E_{\alpha}$ is the intermediate field of $\overline{\mathbb{F}_{q}}(x,y)/\overline{\mathbb{F}_{q}}(x)$ given by
\begin{equation*}
E_{\alpha}\coloneqq\overline{\mathbb{F}_{q}}(x,y_{\alpha}),
\end{equation*}
with $y_{\alpha}\coloneqq(\alpha y)^{p^{m-1}}+(\alpha y)^{p^{m-2}}+\cdots+(\alpha y)^{p}+\alpha y$ satisfying the equation $y_{\alpha}^p-y_{\alpha}=\alpha x^{q_{0}}(x^{q}-x)$. Further, $E_{\alpha_{1}}=E_{\alpha_{2}}$ if and only if $\alpha_{1}=\alpha\alpha_{2}$ for some $\alpha\in\mathbb{F}_{p}^{\ast}$. Therefore, there are exactly 
$
\frac{q-1}{p-1}
$
intermediate fields $E$ of $\overline{\mathbb{F}_{q}}(x,y)/\overline{\mathbb{F}_{q}}(x)$ such that $[E:\overline{\mathbb{F}_{q}}(x)]=p$.
\end{proposition}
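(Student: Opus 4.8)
The plan is to read off everything from Galois theory over $K:=\overline{\mathbb{F}_q}(x)$, using that by Proposition~\ref{Proposition: totally ramified place} the extension $F:=\overline{\mathbb{F}_q}(x,y)$ of $K$ is already known to be Galois of degree $q=p^{m}$. Put $G:=\mathrm{Gal}(F/K)$ and $f:=x^{q_{0}}(x^{q}-x)$, so $y^{q}-y=f$. \emph{Step 1 (identify $G$ with $(\mathbb{F}_q,+)$).} Each $\sigma\in G$ carries $y$ to another root of $T^{q}-T-f$, and since $z\mapsto z^{q}-z$ is additive, any two roots of this polynomial differ by a root of $T^{q}-T$, i.e.\ by an element of $\mathbb{F}_q\subseteq\overline{\mathbb{F}_q}$. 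Hence $\sigma(y)=y+c_{\sigma}$ with $c_{\sigma}\in\mathbb{F}_q$; as $G$ fixes $\mathbb{F}_q$ pointwise, $\sigma\mapsto c_{\sigma}$ is a homomorphism $G\to(\mathbb{F}_q,+)$, injective because $F=K(y)$, hence an isomorphism since $|G|=q$. In particular $G\cong(\mathbb{Z}/p)^{m}$ is elementary abelian, which is the first assertion.

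\emph{Step 2 (the subfields $E_{\alpha}$ and their groups).} For $\alpha\in\mathbb{F}_q^{\ast}$ set $y_{\alpha}:=\sum_{i=0}^{m-1}(\alpha y)^{p^{i}}$. Telescoping gives $y_{\alpha}^{p}-y_{\alpha}=(\alpha y)^{p^{m}}-\alpha y=\alpha^{q}y^{q}-\alpha y=\alpha f$ (using $\alpha^{q}=\alpha$), so $E_{\alpha}:=K(y_{\alpha})$ has $[E_{\alpha}:K]\le p$. By additivity of the Frobenius, if $\sigma(y)=y+c$ then $\sigma(y_{\alpha})=y_{\alpha}+\sum_{i=0}^{m-1}(\alpha c)^{p^{i}}=y_{\alpha}+\text{Tr}_{\mathbb{F}_q/\mathbb{F}_p}(\alpha c)$; hence the fixer of $y_{\alpha}$ in $G$ is $U_{\alpha}:=\{\sigma\in G:\text{Tr}_{\mathbb{F}_q/\mathbb{F}_p}(\alpha(\sigma(y)-y))=0\}$, which under the identification of Step~1 is the kernel of the nonzero $\mathbb{F}_p$-linear functional $c\mapsto\text{Tr}_{\mathbb{F}_q/\mathbb{F}_p}(\alpha c)$, and so has index $p$ in $G$. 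Since $y_{\alpha}\in K$ would force $\text{Tr}_{\mathbb{F}_q/\mathbb{F}_p}(\alpha c)=0$ for all $c\in\mathbb{F}_q$ and hence $\alpha=0$, we have $y_{\alpha}\notin K$; therefore $[E_{\alpha}:K]=p$ and, comparing orders, $\mathrm{Gal}(F/E_{\alpha})=U_{\alpha}$.

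\emph{Step 3 (match and count).} By the Galois correspondence the intermediate fields $E$ with $[E:K]=p$ correspond bijectively to the index-$p$ subgroups of $G$, i.e.\ to the hyperplanes of the $m$-dimensional $\mathbb{F}_p$-space $\mathbb{F}_q$, i.e.\ to the kernels of nonzero $\mathbb{F}_p$-functionals on $\mathbb{F}_q$. Nondegeneracy of $\text{Tr}_{\mathbb{F}_q/\mathbb{F}_p}$ shows that each such functional is $c\mapsto\text{Tr}_{\mathbb{F}_q/\mathbb{F}_p}(\alpha c)$ for a unique $\alpha\in\mathbb{F}_q$, so every such $E$ equals some $E_{\alpha}$; two of these functionals share a kernel precisely when they are $\mathbb{F}_p^{\ast}$-proportional, and since $\text{Tr}_{\mathbb{F}_q/\mathbb{F}_p}(\lambda\alpha c)=\lambda\,\text{Tr}_{\mathbb{F}_q/\mathbb{F}_p}(\alpha c)$ this means $E_{\alpha_{1}}=E_{\alpha_{2}}$ if and only if $\alpha_{1}=\lambda\alpha_{2}$ for some $\lambda\in\mathbb{F}_p^{\ast}$. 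Counting orbits gives $|\mathbb{F}_q^{\ast}/\mathbb{F}_p^{\ast}|=(q-1)/(p-1)$ such subfields.

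The only non-formal ingredient is Proposition~\ref{Proposition: totally ramified place}, which pins down $|G|=q$; everything else is the elementary-abelian Artin--Schreier dictionary plus the two identities for $y_{\alpha}$, and I expect the sole point needing genuine care to be the bookkeeping with the trace functional --- that $U_{\alpha}$ has index exactly $p$ in $G$ and depends on $\alpha$ only through its class modulo $\mathbb{F}_p^{\ast}$. Should one prefer not to quote Proposition~\ref{Proposition: totally ramified place}, the substitute is a direct check that $f\notin\{z^{p}-z:z\in K\}$: reduce $x^{q+q_{0}}-x^{q_{0}+1}$ modulo such differences using $x^{pk}\equiv x^{k}$ to a function whose only pole, at infinity, has order $q_{0}+1$ (coprime to $p$), and then run the additive Artin--Schreier theory from scratch; this is the point at which the hypothesis $m>t$ enters.
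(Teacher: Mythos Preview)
Your proof is correct. The paper itself does not prove this proposition; it simply quotes it from Garcia--Stichtenoth \cite{GS}, so there is no in-paper argument to compare against. Your approach---identifying $\mathrm{Gal}(F/K)$ with $(\mathbb{F}_q,+)$ via $\sigma\mapsto\sigma(y)-y$, computing that $\sigma$ fixes $y_\alpha$ exactly when $\mathrm{Tr}_{\mathbb{F}_q/\mathbb{F}_p}(\alpha c_\sigma)=0$, and then invoking nondegeneracy of the trace pairing to parametrize the index-$p$ subgroups---is precisely the standard additive Kummer/Artin--Schreier argument underlying the cited result, and all the verifications (the telescoping identity for $y_\alpha^{p}-y_\alpha$, the action of $\sigma$ on $y_\alpha$, the hyperplane count) are clean. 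One cosmetic point: in Step~2 you can skip the separate check that $y_\alpha\notin K$, since once you know the stabilizer of $y_\alpha$ in $G$ is exactly $U_\alpha$ of index $p$, the Galois correspondence already forces $[K(y_\alpha):K]=p$.
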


Based on Proposition \ref{Proposition: description of elementary abelian p-extensions containing finite fields}, for each $\alpha\in\mathbb{F}_{q}^{\ast}$, consider the plane model $(\mathcal{F}_{\alpha},(x,y_{\alpha}))$ for $E_{\alpha}$, where $\mathcal{F}_{\alpha}$ is the geometrically irreducible plane curve defined over $\mathbb{F}_{q}$ given in affine coordinates by\footnote{The geometric irreducibility of $\mathcal{F}_{\alpha}$ for each $\alpha\in\mathbb{F}_{q}^{\ast}$ follows from the geometric irreducibility of $\mathcal{G}_{\mathcal{S}}$ \cite[Lemma $1.3$]{GS}.}
\begin{equation*}
\mathcal{F}_{\alpha}:Y^{p}-Y=\alpha X^{q_{0}}(X^{q}-X).
\end{equation*}

Also, let $\mathcal{X}_{\alpha}$ be the nonsingular model of $\mathcal{F}_{\alpha}$. If $\bigg\{\alpha_{i}\,\,:\,\,i=1,\ldots,\frac{q-1}{p-1}\bigg\}\subseteq\mathbb{F}_{q}^{\ast}$ is such that
\begin{equation*}
\bigg\{E_{\alpha}\,\,:\,\,\alpha\in\mathbb{F}_{q}^{\ast}\bigg\}=\bigg\{E_{\alpha_{i}}\,\,:\,\,i=1,\ldots,\frac{q-1}{p-1}\bigg\},
\end{equation*}
then the following result relates the L-polynomial of $\mathcal{X}_{\mathcal{G}_{\mathcal{S}}}$ to the L-polynomials of $\mathcal{X}_{\alpha_{i}}$, for $i=1,\ldots,\frac{q-1}{p-1}$. Thus the number of $\mathbb{F}_{q^{n}}$-rational points on $\mathcal{X}_{\mathcal{G}_{\mathcal{S}}}$ can be expressed as a function of the number of $\mathbb{F}_{q^{n}}$-rational points on $\mathcal{X}_{\alpha_{i}}$, with $i=1,\ldots,\frac{q-1}{p-1}$, for each positive integer $n$.

\begin{proposition}[\citealp{DSV}, Corollary $6.7$]
\label{Proposition: relation between intermediate fields and the number of rational points in elementary abelian p-extensions}
Considering the previous notation, then $$\displaystyle{L_{\mathcal{X}_{\mathcal{G}_{\mathcal{S}}}/\mathbb{F}_{q}}(T)=\prod_{i=1}^{\frac{q-1}{p-1}}L_{\mathcal{X}_{\alpha_{i}}/\mathbb{F}_{q}}(T)}.$$ In particular,
\begin{equation}
\label{Equation: number of Fqn rational points on XGS as a function of the number of Fqn rational points on Xalphai}
\emph{N}_{q^{n}}(\mathcal{X}_{\mathcal{G}_{\mathcal{S}}})-(q^{n}+1)=\sum_{i=1}^{\frac{q-1}{p-1}}\bigg(\emph{N}_{q^{n}}(\mathcal{X}_{\alpha_{i}})-(q^{n}+1)\bigg)
\end{equation}
for each positive integer $n$.
\end{proposition}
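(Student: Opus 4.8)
The plan is to exploit the elementary abelian structure of the Galois cover $\mathcal{X}_{\mathcal{G}_{\mathcal{S}}}\to\mathbb{P}^{1}$ induced by $x$. By Proposition~\ref{Proposition: totally ramified place} the extension $\overline{\mathbb{F}_{q}}(x,y)/\overline{\mathbb{F}_{q}}(x)$ is Galois of degree $q$; since an automorphism fixing $x$ sends $y$ to another root of $T^{q}-T-x^{q_{0}}(x^{q}-x)$, it must send $y\mapsto y+c$ with $c^{q}-c=0$, i.e. $c\in\mathbb{F}_{q}$. Hence the Galois group is $G:=(\mathbb{F}_{q},+)\cong(\mathbb{Z}/p\mathbb{Z})^{m}$, and every element of $G$ is already defined over $\mathbb{F}_{q}$; in particular the cover, the $G$-action, and all intermediate quotients are defined over $\mathbb{F}_{q}$. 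First I would pass to the Jacobian $J:=J_{\mathcal{X}_{\mathcal{G}_{\mathcal{S}}}}$ (equivalently, to the Tate module $V_{\ell}(J)$ for a prime $\ell\neq p$), on which $G$ acts by $\mathbb{F}_{q}$-rational endomorphisms commuting with the Frobenius, recalling that $L_{\mathcal{X}_{\mathcal{G}_{\mathcal{S}}}/\mathbb{F}_{q}}(T)$ is the reciprocal of the characteristic polynomial of Frobenius on $J$, a quantity that is multiplicative under products and invariant under $\mathbb{F}_{q}$-isogeny.

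For a subgroup $H\leqslant G$ put $\varepsilon_{H}:=\frac{1}{|H|}\sum_{h\in H}h\in\mathbb{Q}[G]\subseteq\mathrm{End}^{0}(J)$; this is defined over $\mathbb{F}_{q}$, and $\varepsilon_{H}J$ is $\mathbb{F}_{q}$-isogenous to $J_{\mathcal{X}_{\mathcal{G}_{\mathcal{S}}}/H}$ via pullback along $\mathcal{X}_{\mathcal{G}_{\mathcal{S}}}\to\mathcal{X}_{\mathcal{G}_{\mathcal{S}}}/H$. The index-$p$ subgroups $H_{1},\dots,H_{r}$ of $G$, $r=\tfrac{q-1}{p-1}$, are exactly the hyperplanes of $G\cong\mathbb{F}_{p}^{m}$, and by Proposition~\ref{Proposition: description of elementary abelian p-extensions containing finite fields} their fixed fields are precisely $E_{\alpha_{1}},\dots,E_{\alpha_{r}}$, so $\mathcal{X}_{\mathcal{G}_{\mathcal{S}}}/H_{i}=\mathcal{X}_{\alpha_{i}}$ and $\varepsilon_{H_{i}}J\sim_{\mathbb{F}_{q}}J_{\mathcal{X}_{\alpha_{i}}}$; moreover $\varepsilon_{G}J\sim J_{\mathcal{X}_{\mathcal{G}_{\mathcal{S}}}/G}=J_{\mathbb{P}^{1}}=0$, since the base is rational. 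The key algebraic input is then the idempotent identity
\begin{equation*}
1-\varepsilon_{G}=\sum_{i=1}^{r}(\varepsilon_{H_{i}}-\varepsilon_{G})\qquad\text{in }\mathbb{Q}[G],
\end{equation*}
together with $(\varepsilon_{H_{i}}-\varepsilon_{G})(\varepsilon_{H_{j}}-\varepsilon_{G})=0$ for $i\neq j$ and $(\varepsilon_{H_{i}}-\varepsilon_{G})^{2}=\varepsilon_{H_{i}}-\varepsilon_{G}$. All of these are checked by applying every $\overline{\mathbb{Q}}$-character $\chi$ of $G$: writing $\varepsilon_{H}$ as the sum of the primitive idempotents $\varepsilon_{\chi}$ over the $\chi$ trivial on $H$, the identity reduces to the elementary fact that the lines through the origin of $\widehat{G}\cong\mathbb{F}_{p}^{m}$ cover each nonzero character exactly once.

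Applying these pairwise orthogonal idempotents to $J$ and using $\varepsilon_{G}J=0$ yields an $\mathbb{F}_{q}$-isogeny decomposition $J\sim_{\mathbb{F}_{q}}\prod_{i=1}^{r}\varepsilon_{H_{i}}J\sim_{\mathbb{F}_{q}}\prod_{i=1}^{r}J_{\mathcal{X}_{\alpha_{i}}}$; taking reciprocal characteristic polynomials of Frobenius gives $L_{\mathcal{X}_{\mathcal{G}_{\mathcal{S}}}/\mathbb{F}_{q}}(T)=\prod_{i=1}^{r}L_{\mathcal{X}_{\alpha_{i}}/\mathbb{F}_{q}}(T)$. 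For the displayed ``in particular'' it then suffices to note that the multiset of reciprocal roots of a product of polynomials is the union of those of the factors, so applying \eqref{Equation: number of rational points and Weil-number - first form} to $\mathcal{X}_{\mathcal{G}_{\mathcal{S}}}$ and to each $\mathcal{X}_{\alpha_{i}}$ gives $\mathrm{N}_{q^{n}}(\mathcal{X}_{\mathcal{G}_{\mathcal{S}}})-(q^{n}+1)=\sum_{i=1}^{r}\bigl(\mathrm{N}_{q^{n}}(\mathcal{X}_{\alpha_{i}})-(q^{n}+1)\bigr)$ for all $n\geqslant 1$.

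I expect the main obstacle to be the bookkeeping in the middle step: matching the fixed fields of the hyperplanes of $G$ with the fields $E_{\alpha_{i}}$ of Proposition~\ref{Proposition: description of elementary abelian p-extensions containing finite fields}, and verifying the idempotent identity. One point worth stressing is that, although $|G|=q$ is a power of $p$ and hence not invertible in characteristic $p$, it is invertible in $\mathbb{Q}$, so the idempotents $\varepsilon_{H}$ are legitimate elements of $\mathrm{End}^{0}(J)$ and the decomposition is valid. An essentially equivalent alternative would be to run Artin's factorization of the zeta function of $\mathbb{F}_{q}(x,y)/\mathbb{F}_{q}(x)$, grouping the one-dimensional Artin $L$-functions by the lines of $\widehat{G}$; there Serre's divisibility (Theorem~\ref{Theorem: theorem of Serre}) already gives each $L_{\mathcal{X}_{\alpha_{i}}/\mathbb{F}_{q}}(T)\mid L_{\mathcal{X}_{\mathcal{G}_{\mathcal{S}}}/\mathbb{F}_{q}}(T)$, and a comparison of degrees using the genera of the $\mathcal{X}_{\alpha_{i}}$ upgrades this to the full product.
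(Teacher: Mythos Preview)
The paper does not prove this proposition at all: it is quoted verbatim as Corollary~6.7 of \cite{DSV} and used as a black box, so there is no ``paper's own proof'' to compare with. Your argument is correct and self-contained. The idempotent identity $1-\varepsilon_{G}=\sum_{i}(\varepsilon_{H_{i}}-\varepsilon_{G})$ with pairwise orthogonality, checked character by character, is exactly the Kani--Rosen relation for the hyperplane cover of an elementary abelian $p$-group, and together with $\varepsilon_{G}J=0$ (since the base $\mathbb{P}^{1}$ has genus zero) and the standard identification of $\varepsilon_{H_{i}}V_{\ell}(J)$ with $V_{\ell}(J_{\mathcal{X}_{\alpha_{i}}})$ it gives the $\mathbb{F}_{q}$-isogeny $J\sim\prod_{i}J_{\mathcal{X}_{\alpha_{i}}}$ and hence the L-polynomial factorisation; the point-count identity then follows from \eqref{Equation: number of rational points and Weil-number - first form} as you say.

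The alternative you sketch at the end---Artin's factorisation of the zeta function of the abelian cover $\mathbb{F}_{q}(x,y)/\mathbb{F}_{q}(x)$, grouping the one-dimensional Artin factors by $\mathbb{F}_{p}^{\ast}$-orbits of characters of $G$---is essentially the argument in \cite{DSV}; it is the same decomposition read on the analytic side rather than via $\mathrm{End}^{0}(J)$. Your Jacobian/idempotent formulation has the mild advantage of making the $\mathbb{F}_{q}$-rationality of the splitting transparent (all idempotents lie in $\mathbb{Q}[G]\subseteq\mathrm{End}^{0}(J)$ and commute with Frobenius because every $g\in G$ is $\mathbb{F}_{q}$-rational), while the Artin approach avoids appealing to isogeny categories. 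Either route is fine here; the only point to keep clean is the matching of index-$p$ subgroups of $G$ with the intermediate fields $E_{\alpha_{i}}$, which you correctly attribute to Proposition~\ref{Proposition: description of elementary abelian p-extensions containing finite fields}.
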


The following lemmas describe the number $\text{N}_{q^{n}}(\mathcal{X}_{\mathcal{G}_{\mathcal{S}}})$ for each positive integer $n$.

\begin{lemma}
\label{Lemma: number of points on XGS as a function of the number of points on X1}
For each $\alpha\in\mathbb{F}_{q}^{\ast}$, $\mathbb{F}_{q}(x,y_{\alpha})$ is $\mathbb{F}_{q}$-isomorphic to $\mathbb{F}_{q}(x,y_{1})$. In particular, from $\eqref{Equation: number of Fqn rational points on XGS as a function of the number of Fqn rational points on Xalphai}$,
\begin{equation*}
\emph{N}_{q^{n}}(\mathcal{X}_{\mathcal{G}_{\mathcal{S}}})-(q^{n}+1)=\frac{q-1}{p-1}\bigg(\emph{N}_{q^{n}}(\mathcal{X}_{1})-(q^{n}+1)\bigg)
\end{equation*}
for each positive integer $n$.
\end{lemma}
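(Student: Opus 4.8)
The plan is to produce, for each $\alpha\in\mathbb{F}_q^{\ast}$, an explicit $\mathbb{F}_q$-isomorphism between the function fields $\mathbb{F}_q(x,y_\alpha)$ and $\mathbb{F}_q(x,y_1)$, after which the displayed formula follows immediately by combining the resulting equality of point counts $\text{N}_{q^n}(\mathcal{X}_\alpha)=\text{N}_{q^n}(\mathcal{X}_1)$ with \eqref{Equation: number of Fqn rational points on XGS as a function of the number of Fqn rational points on Xalphai} and the fact (from Proposition \ref{Proposition: description of elementary abelian p-extensions containing finite fields}) that there are exactly $\frac{q-1}{p-1}$ intermediate fields, each equal to some $E_{\alpha_i}$. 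So the real content is the first sentence.

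First I would recall that $y_\alpha$ satisfies $y_\alpha^p-y_\alpha=\alpha x^{q_0}(x^q-x)$, so $\mathbb{F}_q(x,y_\alpha)$ is the plane model $\mathcal{F}_\alpha: Y^p-Y=\alpha X^{q_0}(X^q-X)$, while $\mathbb{F}_q(x,y_1)$ is $\mathcal{F}_1: Y^p-Y=X^{q_0}(X^q-X)$. The natural substitution is $X\mapsto \lambda X$ for a suitable $\lambda\in\mathbb{F}_q^{\ast}$: under $x\mapsto \lambda x$ the right-hand side of $\mathcal{F}_1$ becomes $\lambda^{q_0}(\lambda^q x^q-\lambda x)\,x^{q_0}$, which is not quite a scalar multiple of $x^{q_0}(x^q-x)$ unless $\lambda^{q}=\lambda$, i.e.\ $\lambda\in\mathbb{F}_q^{\ast}$ — which is exactly our hypothesis — in which case it equals $\lambda^{q_0+1}x^{q_0}(x^q-x)$. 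Hence, choosing $\lambda\in\mathbb{F}_q^{\ast}$ with $\lambda^{q_0+1}=\alpha$ (such $\lambda$ exists: the map $u\mapsto u^{q_0+1}$ on $\mathbb{F}_q^{\ast}$ is surjective because $\gcd(q_0+1,q-1)=1$, which one checks from $q=p^{2t-1}$, $q_0=p^t$, noting $q_0\equiv (q_0+1)-1$ and $q_0^2=p^{2t}=pq\equiv p\pmod{q-1}$, so $\gcd(q_0+1,q-1)\mid \gcd(q_0^2-1,q-1)=\gcd(p-1,q-1)$ and a direct argument pins it to $1$), the assignment $x\mapsto \lambda x$, $y_1\mapsto y_\alpha$ defines the desired $\mathbb{F}_q$-isomorphism $\mathbb{F}_q(x,y_1)\to\mathbb{F}_q(x,y_\alpha)$. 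Alternatively, and perhaps more cleanly, one can invoke Theorem \ref{Theorem: the automorphism group of XGS}, whose maps $(x,y)\mapsto(\alpha x,\alpha^{q_0+1}y)$ already furnish an $\mathbb{F}_q$-automorphism of $\overline{\mathbb{F}_q}(\mathcal{X}_{\mathcal{G}_{\mathcal{S}}})$ carrying the subfield $E_1$ to $E_\alpha$ whenever $\alpha^{q_0+1}$ ranges suitably; since $\text{N}_{q^n}$ is a birational invariant and this map is defined over $\mathbb{F}_q$, the two curves $\mathcal{X}_1$ and $\mathcal{X}_\alpha$ have the same number of $\mathbb{F}_{q^n}$-rational points.

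The main obstacle is the surjectivity of $u\mapsto u^{q_0+1}$ on $\mathbb{F}_q^{\ast}$, equivalently $\gcd(q_0+1,q-1)=1$; everything else is formal substitution and bookkeeping. I would isolate this as a short number-theoretic claim: writing $q-1=p^{2t-1}-1$ and $q_0+1=p^t+1$, one has $(p^t+1)(p^{t-1}-p^{t-2}+\cdots)$ type telescoping, or more directly, if a prime $r$ divides both $q_0+1=p^t+1$ and $q-1=p^{2t-1}-1$, then $p^t\equiv-1$ and $p^{2t-1}\equiv1\pmod r$; from $p^{2t}=p\cdot p^{2t-1}\equiv p$ and $p^{2t}=(p^t)^2\equiv1$ we get $p\equiv1\pmod r$, whence $p^t\equiv1\equiv-1\pmod r$, forcing $r=2$; but $q_0+1=p^t+1$ and $q-1=p^{2t-1}-1$ are both even only when $p$ is odd, and then $4\mid q-1$ would be needed while $p^t+1\equiv2\pmod 4$, so in fact $r=2$ cannot divide $q_0+1$ to the required extent — one checks $\gcd(q_0+1,q-1)\in\{1,2\}$ and then that $2\nmid\gcd$ after all, or simply observes that since $\lambda\in\mathbb{F}_q^{\ast}$ already gives a well-defined map $\mathcal{F}_1\to\mathcal{F}_\alpha$ for any $\alpha$ of the form $\lambda^{q_0+1}$, and these $\alpha$ together with the scaling $\alpha\mapsto\mu\alpha$, $\mu\in\mathbb{F}_p^{\ast}$, already exhaust all classes $E_\alpha$, no full surjectivity is even needed. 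I would adopt whichever of these is shortest in context, most likely the appeal to the automorphism in Theorem \ref{Theorem: the automorphism group of XGS}, with a forward reference.
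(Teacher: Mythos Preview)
Your central claim that $\gcd(q_0+1,q-1)=1$ is false: since $p$ is odd, both $q_0+1=p^t+1$ and $q-1=p^{2t-1}-1$ are even, so the gcd is at least $2$ (and in fact exactly $2$, as your own prime-by-prime analysis shows once you stop trying to exclude $r=2$). Hence the map $u\mapsto u^{q_0+1}$ on $\mathbb{F}_q^\ast$ is \emph{not} surjective---its image is precisely the subgroup of squares---and your direct substitution $x\mapsto\lambda x$, $y_1\mapsto y_\alpha$ cannot reach non-square values of $\alpha$. The forward reference to Theorem~\ref{Theorem: the automorphism group of XGS} suffers from the same defect: the diagonal automorphisms there carry $E_1$ to $E_{c^{q_0+1}}$, again only squares.

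Your throwaway fallback (``these $\alpha$ together with the scaling $\alpha\mapsto\mu\alpha$, $\mu\in\mathbb{F}_p^\ast$, already exhaust all classes $E_\alpha$'') is actually the correct idea, but you do not justify it. What is needed is that $\mathbb{F}_p^\ast\cdot(\mathbb{F}_q^\ast)^2=\mathbb{F}_q^\ast$, equivalently that $\mathbb{F}_p^\ast$ contains a non-square of $\mathbb{F}_q^\ast$; this holds because $[\mathbb{F}_q:\mathbb{F}_p]=2t-1$ is odd. The paper avoids even this small detour by writing down explicit elements: for given $\alpha$ it sets $\beta=\alpha^{1+p+\cdots+p^{t-1}}\in\mathbb{F}_q^\ast$ and $\gamma=\mathrm{N}_{\mathbb{F}_q/\mathbb{F}_p}(\alpha)\in\mathbb{F}_p^\ast$, then checks directly that $\beta^{q_0+1}=\gamma\alpha$, so that $(\beta x,\gamma y_\alpha)$ satisfies the equation of $\mathcal{F}_1$. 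This is your ``$\lambda^{q_0+1}=\mu\alpha$'' solved constructively, and it sidesteps any gcd discussion.
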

\begin{proof}\renewcommand{\qedsymbol}{$\square$}
Recall that $q=p^{m}$ and $q_{0}=p^{t}$, where $m=2t-1$; that is, $m-t=t-1$. For $\alpha\in\mathbb{F}_{q}^{\ast}$, let $\beta\coloneqq\alpha^{p^{m-t}+p^{m-t-1}+\cdots+p+1}\in \mathbb{F}_{q}$ and $\gamma\coloneqq\text{N}_{\mathbb{F}_{q}/\mathbb{F}_{p}}(\alpha)\in\mathbb{F}_p$. Then,
\begin{eqnarray*}
\beta^{q_0}\beta&=&\bigg(\alpha^{p^{m-t}+p^{m-t-1}+\cdots+p+1}\bigg)^{p^{t}}\alpha^{p^{m-t}+p^{m-t-1}+\cdots+p+1}\\
&=&\alpha^{p^{m}+p^{m-1}+\cdots+p^{t+1}+p^{t}}\cdot\alpha^{p^{t-1}+p^{t-2}+\cdots+p+1}\\
&=&\alpha^{p^{m}}\alpha^{p^{m-1}+\cdots+p+1}\\
&=&\alpha\gamma,
\end{eqnarray*}
and $\mathbb{F}_{q}(x,y_{\alpha})=\mathbb{F}_{q}(\beta x,\gamma y_{\alpha})$, with 
\begin{eqnarray*}
(\beta x)^{q_{0}}((\beta x)^{q}-(\beta x))&=&\beta^{q_{0}}\beta x^{q_{0}}(x^{q}-x)\\
&=&\alpha\gamma x^{q_{0}}(x^{q}-x)\\
&=&(\gamma y_{\alpha})^{p}-(\gamma y_{\alpha}).
\end{eqnarray*}
Therefore, 
\begin{equation*}
\begin{array}{cccc}
\mathbb{F}_{q}(x,y_{\alpha})&\rightarrow&\mathbb{F}_{q}(x,y_{1})\\
\frac{A(\beta x,\gamma y_{\alpha})}{B(\beta x,\gamma y_{\alpha})}&\mapsto&\frac{A(x,y_{1})}{B(x,y_{1})}
\end{array}
\end{equation*}
is an $\mathbb{F}_{q}$-isomorphism between $\mathbb{F}_{q}(x,y_{\alpha})$ and $\mathbb{F}_{q}(x,y_{1})$, where $A(X,Y),B(X,Y)\in\mathbb{F}_{q}[X,Y]$ and $B(\beta x,\gamma y_{\alpha})\neq 0$.
\end{proof}

\begin{lemma}
\label{Lemma: number of points on XGS as a function of the number of points on tilX1}
For each $\alpha\in\mathbb{F}_{q}^{\ast}$, $E_{1}=\overline{\mathbb{F}_{q}}(x,z_{1})$ and $\mathbb{F}_{q}(x,y_{1})=\mathbb{F}_{q}(x,z_{1})$, where 
\begin{equation*}
z_{1}\coloneqq y_{1}-x^{\frac{q}{p}}x^{\frac{q_0}{p}}-x^{\frac{q}{p^{2}}}x^{\frac{q_0}{p^{2}}}-\cdots -x^{\frac{q}{p^{t-1}}}x^{\frac{q_0}{p^{t-1}}} - x^{\frac{q}{q_{0}}}x^{\frac{q_{0}}{q_{0}}}\in\mathbb{F}_{q}(x,y_{1})
\end{equation*}
satisfies $z_{1}^{p}-z_{1}=x(x^{\frac{q}{q_{0}}}-x^{q_{0}})$. In particular, if $\tilde{\mathcal{F}}_{1}$ is the geometrically irreducible plane curve given in affine coordinates by
\begin{equation*}
\tilde{\mathcal{F}}_{1}:Y^{p}-Y=X(X^{\frac{q}{q_{0}}}-X^{q_{0}}),
\end{equation*}
and $\tilde{\mathcal{X}}_{1}$ is its nonsingular model, then $(\tilde{\mathcal{F}}_{1},(x,z_{1}))$ is another plane model for $E_{1}$, and 
\begin{equation*}
\emph{N}_{q^{n}}(\mathcal{X}_{\mathcal{G}_{\mathcal{S}}})-(q^{n}+1)=\frac{q-1}{p-1}\bigg(\emph{N}_{q^{n}}(\tilde{\mathcal{X}}_{1})-(q^{n}+1)\bigg)
\end{equation*}
for each positive integer $n$.
\end{lemma}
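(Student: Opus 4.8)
The plan is to verify directly that the element $z_1$ satisfies the Artin--Schreier-type equation $z_1^{p}-z_1=x(x^{q/q_0}-x^{q_0})$, and then to deduce the field identities and the point-count formula formally. Since $m=2t-1\geqslant t$, every exponent appearing in the definition of $z_1$, namely $q/p^{j}=p^{m-j}$ and $q_0/p^{j}=p^{t-j}$ for $1\leqslant j\leqslant t$, is a non-negative power of $p$; in particular $z_1$ is a polynomial in $x$ and $y_1$ with coefficients in $\mathbb{F}_p$, so $z_1\in\mathbb{F}_q(x,y_1)$.

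For the computation I would set $a_j\coloneqq q/p^{j}+q_0/p^{j}$ for $0\leqslant j\leqslant t$, so that $a_0=q+q_0$ and $a_t=q/q_0+1=p^{t-1}+1$, and observe that $z_1=y_1-\sum_{j=1}^{t}x^{a_j}$. Since Frobenius is additive and $p a_j=a_{j-1}$, raising to the $p$-th power gives $z_1^{p}=y_1^{p}-\sum_{j=1}^{t}x^{p a_j}=y_1^{p}-\sum_{j=1}^{t}x^{a_{j-1}}$, so using $y_1^{p}-y_1=x^{q_0}(x^{q}-x)=x^{a_0}-x^{q_0+1}$ one obtains
\[
z_1^{p}-z_1=\bigl(x^{a_0}-x^{q_0+1}\bigr)-\sum_{j=1}^{t}\bigl(x^{a_{j-1}}-x^{a_j}\bigr).
\]
The sum telescopes to $x^{a_0}-x^{a_t}$, hence $z_1^{p}-z_1=x^{a_t}-x^{q_0+1}=x^{q/q_0+1}-x^{q_0+1}=x(x^{q/q_0}-x^{q_0})$, as claimed. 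This telescoping identity is the only point of the argument that requires any calculation at all.

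From here everything is formal. Since $z_1\in\mathbb{F}_q(x,y_1)\subseteq\overline{\mathbb{F}_q}(x,y_1)=E_1$ and, conversely, $y_1=z_1+\sum_{j=1}^{t}x^{a_j}$ lies in $\overline{\mathbb{F}_q}(x,z_1)$ and in $\mathbb{F}_q(x,z_1)$, we get $E_1=\overline{\mathbb{F}_q}(x,z_1)$ and $\mathbb{F}_q(x,y_1)=\mathbb{F}_q(x,z_1)$. Because $R(X)=X^{q/q_0}-X^{q_0}$ is a $p$-polynomial over $\mathbb{F}_q$, Proposition \ref{Proposition: properties of the plane curve FR} shows that $\tilde{\mathcal{F}}_1:Y^{p}-Y=X R(X)$ is geometrically irreducible, so $(\tilde{\mathcal{F}}_1,(x,z_1))$ is indeed a plane model of $E_1$ and $\tilde{\mathcal{X}}_1$ is well defined. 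The equality $\mathbb{F}_q(x,y_1)=\mathbb{F}_q(x,z_1)$ exhibits an $\mathbb{F}_q$-isomorphism between $\mathcal{X}_1$ and $\tilde{\mathcal{X}}_1$ --- it is this $\mathbb{F}_q$-rationality, rather than a bare $\overline{\mathbb{F}_q}$-isomorphism, that is needed --- whence $\text{N}_{q^{n}}(\mathcal{X}_1)=\text{N}_{q^{n}}(\tilde{\mathcal{X}}_1)$ for every $n\geqslant 1$. Substituting this into the formula of Lemma \ref{Lemma: number of points on XGS as a function of the number of points on X1} yields the displayed identity. I expect no genuine obstacle: the only real care required is keeping the exponent bookkeeping straight so that the telescoping becomes visible.
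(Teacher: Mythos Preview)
Your proposal is correct and follows essentially the same route as the paper: both proofs verify the Artin--Schreier relation for $z_1$ by the same telescoping computation (the paper compresses it to the single line $z_1^{p}-z_1=y_1^{p}-y_1-x^{q}x^{q_0}+x^{q/q_0}x$), then read off the field equalities and invoke Proposition~\ref{Proposition: properties of the plane curve FR} and Lemma~\ref{Lemma: number of points on XGS as a function of the number of points on X1}. Your write-up is simply more explicit about the indexing $a_j$ and about why the $\mathbb{F}_q$-rationality of the isomorphism matters.
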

\begin{proof}\renewcommand{\qedsymbol}{$\square$}
The equalities $E_{1}=\overline{\mathbb{F}_{q}}(x,z_{1})$ and $\mathbb{F}_{q}(x,y_{1})=\mathbb{F}_{q}(x,z_{1})$ follow immediately from the definition of $z_{1}$. Further, 
\begin{eqnarray*}
z_{1}^p-z_{1}&=&y_{1}^p-y_{1}-x^{q}x^{q_0}+x^{\frac{q}{q_0}}x^{\frac{q_0}{q_0}}\\
&=&x^{q_{0}}(x^{q}-x)-x^{q}x^{q_0}+x^{\frac{q}{q_0}}x\\
&=&x(x^{\frac{q}{q_0}}-x^{q_0})
\end{eqnarray*}
as desired, and the other statements follow from the geometric irreducibility of $\tilde{\mathcal{F}}_{1}$ given in Proposition \ref{Proposition: properties of the plane curve FR}.
\end{proof}

\subsection{The Number of $\mathbb{F}_{q^{n}}$-Rational Points on $\tilde{\mathcal{F}}_{1}:Y^{p}-Y=X(X^{\frac{q}{q_{0}}}-X^{q_{0}})$}\label{Subsection: Step 2 - The number of Fqn-rational points on tilF1}

Considering the notation as in Subsection \ref{Subsection: On the number of Fqn-rational points of Yp-Y=XR(X)}, let $\tilde{\mathcal{F}}_{1}=\mathcal{F}_{R}$, where $$R(X)=X^{\frac{q}{q_{0}}}-X^{q_{0}}.$$

To apply Proposition \ref{Proposition: number of solutions as a function of the dimension}, let us determine the dimension of $W_{R}^{(n)}$ over $\mathbb{F}_{p}$ based on the characterization provided in Proposition \ref{Proposition: characterization of WRn}. For this, we first note that
\begin{equation*}
E_{R}(T)=(T-T^{q})^{p}-(T-T^{q}).
\end{equation*}

Thus the following statements are equivalent for an element $\alpha\in\mathbb{F}_{q^{n}}$:
\begin{enumerate}
\item[1.] $\alpha\in W_{R}^{(n)}$
\item[2.] $E_{R}(\alpha)=0$
\item[3.] $\alpha^{q}-\alpha\in\mathbb{F}_{p}$.
\end{enumerate}

The following lemma is a consequence of the results in \cite{CH} and \cite{LIANG}.

\begin{lemma}
\label{Lemma: the dimension of WRn}
Let $\beta\in\mathbb{F}_{p}$ be fixed. Then,
\begin{equation*}
\#\bigg\{\alpha\in\mathbb{F}_{q^n}\,\,:\,\,\alpha^{q}-\alpha-\beta=0\bigg\}=\left\{\begin{array}{ll}
q,& \text{ if }n\beta=0\\
0,& \text{ otherwise}.
\end{array}\right.
\end{equation*}
In particular, the splitting field of $E_{R}(T)$ is $\mathbb{F}_{q^{p}}$,
\begin{equation*}
\#W_{R}^{(n)}=\left\{\begin{array}{cl}
pq,& \text{ if }p\mid n\\
q,& \text{ otherwise},
\end{array}\right.
\end{equation*}
and 
\begin{equation*}
\dim_{\mathbb{F}_{p}}(W_{R}^{(n)})=\left\{\begin{array}{cl}
m+1,& \text{ if }p\mid n\\
m,& \text{ otherwise}.
\end{array}\right.
\end{equation*}
\end{lemma}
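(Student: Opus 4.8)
The plan is to first settle the count $\#\{\alpha\in\mathbb{F}_{q^n}\colon \alpha^q-\alpha-\beta=0\}$ for fixed $\beta\in\mathbb{F}_p$, and then read off the three claimed consequences. For the count, I would observe that $L(T)\coloneqq T^q-T$ is a separable $\mathbb{F}_p$-linear (additive) polynomial, so the solution set of $L(\alpha)=\beta$ in $\overline{\mathbb{F}_q}$ is a coset of $\ker L=\mathbb{F}_q$ (which has exactly $q$ elements), and it is nonempty since $L$ is surjective on $\overline{\mathbb{F}_q}$. Hence the question reduces to whether this coset meets $\mathbb{F}_{q^n}$; equivalently, whether there exists one solution lying in $\mathbb{F}_{q^n}$, because $\mathbb{F}_q\subseteq\mathbb{F}_{q^n}$ already so if one solution is in $\mathbb{F}_{q^n}$ then all $q$ of them are. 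To test solvability in $\mathbb{F}_{q^n}$ I would invoke the additive Hilbert~90 / the standard Artin–Schreier-type criterion (this is precisely what the cited results \cite{CH}, \cite{LIANG} provide): $\alpha^q-\alpha=\beta$ has a solution in $\mathbb{F}_{q^n}$ if and only if the relative trace $\mathrm{Tr}_{\mathbb{F}_{q^n}/\mathbb{F}_q}(\beta)=0$. Since $\beta\in\mathbb{F}_q$, that trace is simply $n\beta$ (viewed in $\mathbb{F}_q$, i.e.\ in $\mathbb{F}_p$), which is $0$ exactly when $p\mid n$ or $\beta=0$, i.e.\ when $n\beta=0$ in $\mathbb{F}_p$. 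This gives the displayed dichotomy: $q$ solutions when $n\beta=0$, and $0$ otherwise.

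Next I would assemble the statement about $W_R^{(n)}$. By the equivalence recorded just before the lemma, $\alpha\in W_R^{(n)}$ iff $\alpha^q-\alpha\in\mathbb{F}_p$, so
\begin{equation*}
W_R^{(n)}=\bigcup_{\beta\in\mathbb{F}_p}\bigl\{\alpha\in\mathbb{F}_{q^n}\colon \alpha^q-\alpha=\beta\bigr\},
\end{equation*}
a disjoint union over $\beta\in\mathbb{F}_p$. If $p\nmid n$, only the term $\beta=0$ contributes, giving $\#W_R^{(n)}=q$. If $p\mid n$, every one of the $p$ values of $\beta$ contributes $q$ solutions, giving $\#W_R^{(n)}=pq$. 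For the splitting field of $E_R(T)$: its roots are exactly $\{\alpha\in\overline{\mathbb{F}_q}\colon \alpha^q-\alpha\in\mathbb{F}_p\}$; the argument above (with $\mathbb{F}_{q^n}$ replaced by a general extension $\mathbb{F}_{q^\ell}$) shows this set lies in $\mathbb{F}_{q^\ell}$ iff $p\mid\ell$, so the smallest such field is $\mathbb{F}_{q^p}$ — alternatively, one just notes that picking any $\alpha_1$ with $\alpha_1^q-\alpha_1=1$ generates a degree-$p$ Artin–Schreier-type extension of $\mathbb{F}_q$, which over a finite field is $\mathbb{F}_{q^p}$, and all other roots differ from translates of $\alpha_1$ by elements of $\mathbb{F}_q\subseteq\mathbb{F}_{q^p}$. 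Finally, the dimension statement is immediate from the cardinalities: $W_R^{(n)}$ is an $\mathbb{F}_p$-subspace of $\mathbb{F}_{q^n}$ with $p^{\dim}=\#W_R^{(n)}$, and since $q=p^m$ we get $\dim_{\mathbb{F}_p}(W_R^{(n)})=m$ when $p\nmid n$ and $m+1$ when $p\mid n$.

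The only real subtlety — and the step I would be most careful about — is the solvability criterion for $\alpha^q-\alpha=\beta$ in $\mathbb{F}_{q^n}$ and the identification of the obstruction with $n\beta$. One must be attentive that the relevant trace is the \emph{relative} trace $\mathrm{Tr}_{\mathbb{F}_{q^n}/\mathbb{F}_q}$ (not the absolute trace to $\mathbb{F}_p$), and that for $\beta$ already in the base field $\mathbb{F}_q$ this relative trace collapses to multiplication by $[\mathbb{F}_{q^n}:\mathbb{F}_q]=n$; here the characteristic-$p$ arithmetic matters, since $n\beta=0$ in $\mathbb{F}_p$ means $p\mid n$ or $p\mid\beta$ (the latter forcing $\beta=0$ as $\beta\in\mathbb{F}_p$), which is exactly the case distinction in the statement. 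Everything else is bookkeeping: counting cosets, taking the disjoint union over $\beta\in\mathbb{F}_p$, and converting cardinalities of $\mathbb{F}_p$-subspaces into dimensions.
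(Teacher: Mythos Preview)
Your proof is correct and is precisely the argument underlying the citations to \cite{CH} and \cite{LIANG} that the paper invokes in lieu of a proof: the additive Hilbert~90 criterion for solvability of $\alpha^{q}-\alpha=\beta$ in $\mathbb{F}_{q^{n}}$ via the relative trace $\mathrm{Tr}_{\mathbb{F}_{q^{n}}/\mathbb{F}_{q}}(\beta)=n\beta$, followed by the disjoint-union count over $\beta\in\mathbb{F}_{p}$ and the observation that the splitting field contains $\mathbb{F}_{q}$ (the $\beta=0$ fibre) and hence is $\mathbb{F}_{q^{\ell}}$ for the least $\ell$ with $p\mid\ell$. There is nothing to add.
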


Recall that $m$ is odd. Therefore, since Proposition \ref{Proposition: properties of the plane curve FR} provides that the genus of $\tilde{\mathcal{X}}_{1}$ is given by
$$
g(\tilde{\mathcal{X}}_{1})\coloneqq \frac{p^{t}(p-1)}{2},
$$ 
as a consequence of  Proposition \ref{Proposition: number of solutions as a function of the dimension}, Lemma \ref{Lemma: the dimension of WRn}, and the considerations in Subsection \ref{Subsection: On the number of Fqn-rational points of Yp-Y=XR(X)}, the following holds.

\begin{lemma}
\label{Lemma: number of rational points on tilX1 without determining the sign}
The number of $\mathbb{F}_{q^{n}}$-rational points on $\tilde{\mathcal{X}}_{1}$ can be described as follows.
\begin{enumerate}
\item[$1.$] If $p\mid n$, then 
\begin{equation}
\label{Equation: number of rational points on tilX1 without the determination of the sign when p mid n}
\emph{N}_{q^{n}}(\tilde{\mathcal{X}}_{1})=\left\{\begin{array}{cl}
q^n+1,& \text{ if }n\text{ is odd}\\
q^n+1\pm 2g(\tilde{\mathcal{X}}_{1})q^{n/2},& \text{ if }n\text{ is even}.
\end{array}\right.
\end{equation}
\item[$2.$] If $p\nmid n$, then 
\begin{equation}
\label{Equation: number of rational points on tilX1 without the determination of the sign when p nmid n}
\emph{N}_{q^{n}}(\tilde{\mathcal{X}}_{1})=\left\{\begin{array}{cl}
q^n+1\pm 2g(\tilde{\mathcal{X}}_{1})q^{n/2}p^{-1/2},& \text{ if }n\text{ is odd}\\
q^n+1,& \text{ if }n\text{ is even}.
\end{array}\right.
\end{equation}
\end{enumerate}
\noindent In particular, 
$
\emph{N}_{q}(\tilde{\mathcal{X}}_{1})=qp+1,
$
and $\tilde{\mathcal{X}}_{1}$ is $\mathbb{F}_{q^{n}}$-maximal or minimal if and only if $p\mid n$ and $n$ is even.
\end{lemma}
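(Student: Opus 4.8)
The plan is to combine Proposition~\ref{Proposition: number of solutions as a function of the dimension} with Lemma~\ref{Lemma: the dimension of WRn}, feeding in the genus formula $g(\tilde{\mathcal{X}}_{1})=p^{t}(p-1)/2$ supplied by Proposition~\ref{Proposition: properties of the plane curve FR} (here $k=t$, since $R(X)=X^{q/q_{0}}-X^{q_{0}}$ with $q/q_{0}=p^{m-t}=p^{t-1}$ and $q_{0}=p^{t}$, so the top exponent is $p^{t}$). Recall that $\text{N}_{q^{n}}(\tilde{\mathcal{X}}_{1})$ equals the number of $\mathbb{F}_{q^{n}}^{2}$-solutions of $Y^{p}-Y=X(X^{q/q_{0}}-X^{q_{0}})$ plus one, by Proposition~\ref{Proposition: properties of the plane curve FR}(2) (the unique point at infinity is the center of exactly one $\mathbb{F}_{q}$-rational branch). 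The parity that controls Proposition~\ref{Proposition: number of solutions as a function of the dimension} is that of $mn-\dim_{\mathbb{F}_{p}}(W_{R}^{(n)})$.

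The key computation is to evaluate this parity in the two regimes. If $p\mid n$, Lemma~\ref{Lemma: the dimension of WRn} gives $\dim_{\mathbb{F}_{p}}(W_{R}^{(n)})=m+1$, so $mn-\dim_{\mathbb{F}_{p}}(W_{R}^{(n)})=mn-m-1=m(n-1)-1$. Since $m$ is odd, this is odd exactly when $n-1$ is even, i.e.\ when $n$ is odd; and even exactly when $n$ is even. Thus for $p\mid n$ and $n$ odd the solution count is $q^{n}$, giving $\text{N}_{q^{n}}(\tilde{\mathcal{X}}_{1})=q^{n}+1$; for $p\mid n$ and $n$ even it is $q^{n}\pm(p-1)p^{(m+1)/2}q^{n/2}=q^{n}\pm 2g(\tilde{\mathcal{X}}_{1})q^{n/2}$, using $(p-1)p^{(m+1)/2}=(p-1)p^{t}=2g(\tilde{\mathcal{X}}_{1})$ because $m+1=2t$. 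If $p\nmid n$, then $\dim_{\mathbb{F}_{p}}(W_{R}^{(n)})=m$, so $mn-\dim_{\mathbb{F}_{p}}(W_{R}^{(n)})=m(n-1)$, which (as $m$ is odd) is odd precisely when $n$ is even and even precisely when $n$ is odd. Hence for $p\nmid n$ and $n$ even the count is $q^{n}$, giving $\text{N}_{q^{n}}(\tilde{\mathcal{X}}_{1})=q^{n}+1$; for $p\nmid n$ and $n$ odd it is $q^{n}\pm(p-1)p^{m/2}q^{n/2}$, and since $m=2t-1$ we have $p^{m/2}=p^{t}p^{-1/2}$, so $(p-1)p^{m/2}q^{n/2}=2g(\tilde{\mathcal{X}}_{1})q^{n/2}p^{-1/2}$, matching \eqref{Equation: number of rational points on tilX1 without the determination of the sign when p nmid n}.

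For the ``in particular'' clauses: taking $n=1$ (so $p\nmid n$, $n$ odd) one must pin down the sign, and here the direct count is elementary — for each $x\in\mathbb{F}_{q}$ the equation $z^{p}-z=x(x^{q/q_{0}}-x^{q_{0}})=0$ has exactly $p$ solutions, since $x^{q/q_{0}}=x^{q_{0}}$ for all $x\in\mathbb{F}_{q}$ (both exponents reduce to the identity on $\mathbb{F}_{q}$ as $\gcd$ considerations show, or more simply $x^{q}=x$ forces $x^{q/q_{0}}=x^{q_{0}}$ when raised to the $q_{0}$-th power and injectivity of $x\mapsto x^{q_{0}}$ is used). Hence there are $pq$ affine solutions and $\text{N}_{q}(\tilde{\mathcal{X}}_{1})=pq+1$. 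Finally, the statement that $\tilde{\mathcal{X}}_{1}$ is $\mathbb{F}_{q^{n}}$-maximal or minimal iff $p\mid n$ and $n$ is even follows because maximality or minimality requires the ``error term'' to have absolute value $2g(\tilde{\mathcal{X}}_{1})q^{n/2}$, which from \eqref{Equation: number of rational points on tilX1 without the determination of the sign when p mid n} and \eqref{Equation: number of rational points on tilX1 without the determination of the sign when p nmid n} happens only in the case $p\mid n$, $n$ even; in every other case the deviation from $q^{n}+1$ is either $0$ or $2g(\tilde{\mathcal{X}}_{1})q^{n/2}p^{-1/2}<2g(\tilde{\mathcal{X}}_{1})q^{n/2}$.

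The only genuine subtlety is bookkeeping the exponents: one must be careful that $R(X)=X^{q/q_{0}}-X^{q_{0}}$ really is a $p$-polynomial with $\alpha_{k}\neq 0$ so that Proposition~\ref{Proposition: properties of the plane curve FR} applies with $k=t$ (this needs $q/q_{0}\neq q_{0}$, i.e.\ $m-t\neq t$, which holds since $m=2t-1$ is odd), and that the arithmetic identities $m+1=2t$ and $m=2t-1$ are used consistently to convert $(p-1)p^{\dim/2}$ into the stated multiples of the genus. I expect this exponent-matching to be the main place where care is required; the structural argument is immediate from the cited propositions.
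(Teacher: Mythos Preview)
Your main argument is correct and is exactly the paper's approach: combine Proposition~\ref{Proposition: number of solutions as a function of the dimension} with Lemma~\ref{Lemma: the dimension of WRn} and the genus formula from Proposition~\ref{Proposition: properties of the plane curve FR}, using that $m$ is odd to sort out the parities. The exponent bookkeeping (in particular $k=t$, $m+1=2t$, $p^{m/2}=p^{t}p^{-1/2}$) is handled correctly.

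There is one incorrect step in your ``in particular'' clause for $n=1$. You claim that $x^{q/q_{0}}=x^{q_{0}}$ for all $x\in\mathbb{F}_{q}$, arguing via injectivity of the $q_{0}$-th power map. But $(x^{q/q_{0}})^{q_{0}}=x^{q}=x$, while $(x^{q_{0}})^{q_{0}}=x^{q_{0}^{2}}=x^{pq}=x^{p}$, and these agree only for $x\in\mathbb{F}_{p}$; for instance with $p=3$, $t=2$ the claim $x^{3}=x^{9}$ fails for generic $x\in\mathbb{F}_{27}$. What \emph{is} true is that $\mathrm{Tr}_{\mathbb{F}_{q}/\mathbb{F}_{p}}\big(x(x^{q/q_{0}}-x^{q_{0}})\big)=0$ for all $x\in\mathbb{F}_{q}$, since $\mathrm{Tr}(x^{1+p^{t}})=\mathrm{Tr}\big((x^{1+p^{t}})^{p^{m-t}}\big)=\mathrm{Tr}(x^{p^{t-1}}\cdot x^{q})=\mathrm{Tr}(x^{1+p^{t-1}})$; this is what guarantees $p$ solutions for each $x$. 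Alternatively, and more simply, the sign at $n=1$ is forced by positivity: the formula gives $q+1\pm q(p-1)$, and the minus option equals $q(2-p)+1<0$ for $p\geqslant 3$.
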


Therefore, to determine the sign $\pm$ in \eqref{Equation: number of rational points on tilX1 without the determination of the sign when p mid n} and \eqref{Equation: number of rational points on tilX1 without the determination of the sign when p nmid n}, the analysis is separated in two cases.

\subsubsection{The Case $p\mid n$}

The following result is used to determine the sign in \eqref{Equation: number of rational points on tilX1 without the determination of the sign when p mid n}.

\begin{lemma}[\citealp{BOUWetal}, Comments on page $105$ and Theorem $7.4$]
\label{Lemma: tilX1 covers Xa}
There exist $\beta\in\mathbb{F}_{q^{p}}$ and an $\mathbb{F}_{q^{p}}$-rational morphism $\varphi:\tilde{\mathcal{X}}_{1}\rightarrow \mathfrak{X}_{\beta}$, where $\mathfrak{X}_{\beta}$ is the nonsingular model defined over $\mathbb{F}_{q^{p}}$ of
\begin{equation*}
Y^{p}-Y=\beta X^{2}.
\end{equation*}
\end{lemma}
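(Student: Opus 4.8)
The plan is to realize the covering explicitly at the level of function fields. Writing $R(X)=X^{q/q_{0}}-X^{q_{0}}=X^{p^{t-1}}-X^{p^{t}}$ (with $t\geqslant 2$, so that $1+p^{t-1}$ and $1+p^{t}$ are prime to $p$), I would look for functions $v,w\in\mathbb{F}_{q^{p}}(x,z_{1})$ and a constant $\beta\in\mathbb{F}_{q^{p}}^{\ast}$ with $w^{p}-w=\beta v^{2}$: then $\mathbb{F}_{q^{p}}(v,w)$ is a subfield of $\mathbb{F}_{q^{p}}(x,z_{1})$, which is exactly a dominant $\mathbb{F}_{q^{p}}$-rational morphism $\varphi\colon\tilde{\mathcal{X}}_{1}\to\mathfrak{X}_{\beta}$ onto the nonsingular model of $Y^{p}-Y=\beta X^{2}$. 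The natural first attempt is to take $w=z_{1}-h(x)$ with $h\in\mathbb{F}_{q^{p}}[x]$ and $v=L(x)$ an additive polynomial, so that $w^{p}-w=(z_{1}^{p}-z_{1})-(h^{p}-h)=xR(x)-(h^{p}-h)$ and the whole problem reduces to the polynomial congruence $xR(x)\equiv\beta\,L(x)^{2}\pmod{\wp(\mathbb{F}_{q^{p}}[x])}$, where $\wp(f)=f^{p}-f$.

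I would then turn this into a purely combinatorial statement about additive polynomials. Modulo $\wp(\overline{\mathbb{F}_{q}}[x])$ one has $x^{p^{i}+p^{j}}\equiv x^{1+p^{|i-j|}}$ and $x^{2p^{k}}\equiv x^{2}$ (since $x^{p^{i}+p^{j}}-x^{p^{i-1}+p^{j-1}}=\wp(x^{p^{i-1}+p^{j-1}})$, and similarly for the pure squares), so that for $L(X)=\sum_{k}b_{k}X^{p^{k}}$ one gets $\beta L(X)^{2}\equiv\beta\bigl(\sum_{k}b_{k}^{2}\bigr)X^{2}+2\beta\sum_{\ell\geqslant 1}c_{\ell}X^{1+p^{\ell}}\pmod{\wp}$ with $c_{\ell}=\sum_{j-i=\ell}b_{i}b_{j}$, while $xR(x)=X^{1+p^{t-1}}-X^{1+p^{t}}$ is already reduced. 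Encoding the $b_{k}$ in $B(T)=\sum_{k}b_{k}T^{k}$, the congruence is equivalent to the single symmetric Laurent-polynomial identity
\[
B(T)\,B(T^{-1})\;=\;\frac{1}{2\beta}\bigl(T^{t-1}-T^{t}+T^{1-t}-T^{-t}\bigr)\;=\;\frac{-1}{2\beta}\,T^{-t}(1-T)(T^{2t-1}-1).
\]
This I would solve by factoring: the roots of $(1-T)(T^{2t-1}-1)$ in $\overline{\mathbb{F}_{q}}$ are stable under $\rho\mapsto\rho^{-1}$, the only self-inverse root being $1$, which occurs with even total multiplicity $1+p^{a}$ (writing $2t-1=p^{a}b$, $\gcd(b,p)=1$; here one uses that $p$ is odd). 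Distributing the remaining roots in reciprocal pairs produces $B(T)$ of degree $t$, hence $\deg L=p^{t}=q_{0}$ and $\deg\varphi=q_{0}$, consistent with Riemann--Hurwitz applied to the totally (wildly) ramified point $Q_{\infty}$, using $g(\tilde{\mathcal{X}}_{1})=p^{t}(p-1)/2$ and $g(\mathfrak{X}_{\beta})=(p-1)/2$; the leading coefficient of the identity then fixes $\beta$.

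The step I expect to be the main obstacle is the insistence that $L$, $h$ and $\beta$ be defined over $\mathbb{F}_{q^{p}}$ and not merely over $\overline{\mathbb{F}_{q}}$: the $b$-th roots of unity appearing in the naive factorization of $B(T)$ need not lie in $\mathbb{F}_{q^{p}}$, so one may be forced to allow $v,w$ to involve $z_{1}$ nontrivially and to choose the factorization and the representative of each reciprocal pair compatibly with the Galois action. Carrying this out uses in an essential way the identification of $\mathbb{F}_{q^{p}}$ as the splitting field of $E_{R}(T)=(T-T^{q})^{p}-(T-T^{q})$ (Lemma~\ref{Lemma: the dimension of WRn}), and the value of $\beta$ gets expressed through the discriminant of the quadratic form $Q_{R}^{(p)}$ modulo its radical $W_{R}^{(p)}$. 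This is precisely the content of the comments on page~$105$ and of Theorem~$7.4$ in \cite{BOUWetal}, which I would invoke; the point of the lemma for us is that, in the next subsection, it lets us transport to $\tilde{\mathcal{X}}_{1}$ the quadratic Gauss-sum sign computation available for $Y^{p}-Y=\beta X^{2}$.
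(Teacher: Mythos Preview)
The paper offers no proof of this lemma: it is stated purely as a citation of \cite{BOUWetal}, so there is no argument of the paper's own to compare against. Your proposal goes further, sketching a concrete construction that reduces the question to a symmetric Laurent-polynomial factorization $B(T)B(T^{-1})$; this is indeed along the lines of what the cited reference does. You then correctly isolate the $\mathbb{F}_{q^{p}}$-rationality of $L$, $h$, $\beta$ as the crux and invoke the same reference for it. In the end both you and the paper rest on \cite{BOUWetal}; your write-up is useful motivation rather than an independent proof.

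Two remarks. The hypothesis $t\geqslant 2$ is unnecessary: for $t=1$ one has $xR(x)=x^{2}-x^{1+p}$, and $L(x)=x-x^{p}$, $h(x)=-\tfrac{1}{2}x^{2}$, $\beta=\tfrac{1}{2}\in\mathbb{F}_{p}$ already work. More importantly, your rationality worry is not merely cautious but essential. For $p=5$, $t=2$ (so $q=5^{3}$ and $\mathbb{F}_{q^{p}}=\mathbb{F}_{5^{15}}$), your Laurent identity forces $b_{2}/b_{0}$ to satisfy $u^{2}+u+1=0$, hence to lie in $\mathbb{F}_{25}\setminus\mathbb{F}_{5}$; but $\mathbb{F}_{25}\cap\mathbb{F}_{5^{15}}=\mathbb{F}_{5}$, so no additive $L$ of $p$-degree $t$ with coefficients in $\mathbb{F}_{q^{p}}$ exists. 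The minimal-degree ansatz $v=L(x)$, $w=z_{1}-h(x)$ therefore cannot be made $\mathbb{F}_{q^{p}}$-rational in this case, and one really does need the quadratic-form/automorphism-quotient machinery of \cite{BOUWetal} that you allude to in your final paragraph, rather than the naive root-pairing factorization.
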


Based on the previous result, consider $\tilde{\mathcal{X}}_{1}$, $\mathfrak{X}_{\beta}$, and $\varphi$ defined over the extensions $\mathbb{F}_{q^{n}}$ of $\mathbb{F}_{q^{p}}$, where $n$ is even. In particular, $\beta$ is a square in all these extensions, and since $m$ is odd, the following occurs.

\begin{proposition}[\citealp{BOUWetal}, Lemma $9.1$]
\label{Proposition: maximality of Xa}
Let $n$ be an even positive integer such that $p\mid n$. Then, $\mathfrak{X}_{\beta}$ is  $\mathbb{F}_{q^{n}}$-maximal if and only if $\upeta((-1)^{n/2})=-1$; that is, if and only if
\begin{equation*}
p\equiv 3\pmod{4} \text{ and } n\equiv2\pmod{4}.
\end{equation*}
\end{proposition}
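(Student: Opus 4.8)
The plan is to replace $\mathfrak{X}_{\beta}$ by the explicit curve $\mathcal{C}:Y^{p}-Y=X^{2}$ and to count its $\mathbb{F}_{q^{n}}$-rational points by means of a quadratic Gauss sum over $\mathbb{F}_{q^{n}}$.

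\emph{Step 1 (reduction).} As remarked just above the statement, for $n$ even with $p\mid n$ the element $\beta\in\mathbb{F}_{q^{p}}\subseteq\mathbb{F}_{q^{n}}$ is a square in $\mathbb{F}_{q^{n}}$, say $\beta=\delta^{2}$. The substitution $X\mapsto\delta^{-1}X$ then gives an $\mathbb{F}_{q^{n}}$-isomorphism $\mathfrak{X}_{\beta}\cong\mathcal{C}$, where $\mathcal{C}$ is the nonsingular model of $Y^{p}-Y=X^{2}$; a standard Artin--Schreier computation shows that $\mathcal{C}$ has genus $g_{0}=(p-1)/2$ and a single rational place at infinity. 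Thus it suffices to decide whether $\mathcal{C}$ is $\mathbb{F}_{q^{n}}$-maximal.

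\emph{Step 2 (point count).} Since $Y\mapsto Y^{p}-Y$ is the Artin--Schreier operator on $\mathbb{F}_{q^{n}}$, each element of its image (namely the set $\{a:\text{Tr}_{\mathbb{F}_{q^{n}}/\mathbb{F}_{p}}(a)=0\}$) has exactly $p$ preimages, so the number of affine points of $\mathcal{C}$ over $\mathbb{F}_{q^{n}}$ equals $p\cdot\#\{x\in\mathbb{F}_{q^{n}}:\text{Tr}_{\mathbb{F}_{q^{n}}/\mathbb{F}_{p}}(x^{2})=0\}$. Expanding the trace-zero condition by orthogonality of additive characters and setting $G\coloneqq\sum_{x\in\mathbb{F}_{q^{n}}}\zeta_{p}^{\text{Tr}_{\mathbb{F}_{q^{n}}/\mathbb{F}_{p}}(x^{2})}$, one has $\sum_{x}\zeta_{p}^{t\,\text{Tr}(x^{2})}=\upeta_{mn}(t)\,G$ for $t\in\mathbb{F}_{p}^{\ast}$, where $\upeta_{mn}$ is the quadratic character of $\mathbb{F}_{q^{n}}=\mathbb{F}_{p^{mn}}$; since $mn$ is even, $\upeta_{mn}$ is trivial on $\mathbb{F}_{p}^{\ast}$, and collecting terms yields
\begin{equation*}
\text{N}_{q^{n}}(\mathcal{C})=q^{n}+1+(p-1)\,G.
\end{equation*}
By the Davenport--Hasse relation $-G=(-g_{1})^{mn}$, where $g_{1}=\sum_{a\in\mathbb{F}_{p}^{\ast}}\upeta(a)\zeta_{p}^{a}=\tilde{p}^{1/2}$ is the classical quadratic Gauss sum, together with $(\tilde{p}^{1/2})^{2}=\upeta(-1)p$, the evenness of $mn$, and the oddness of $m$, one obtains
\begin{equation*}
G=-(\tilde{p}^{1/2})^{mn}=-\,\upeta(-1)^{mn/2}p^{mn/2}=-\,\upeta\bigl((-1)^{n/2}\bigr)\,q^{n/2},
\end{equation*}
hence $\text{N}_{q^{n}}(\mathfrak{X}_{\beta})=q^{n}+1-(p-1)\,\upeta((-1)^{n/2})\,q^{n/2}$.

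\emph{Step 3 (conclusion).} Since $2g_{0}q^{n/2}=(p-1)q^{n/2}$, the count in Step 2 attains the Hasse--Weil bound, so $\mathfrak{X}_{\beta}$ is either $\mathbb{F}_{q^{n}}$-maximal or $\mathbb{F}_{q^{n}}$-minimal, and it is maximal precisely when $-\upeta((-1)^{n/2})=1$, i.e.\ when $\upeta((-1)^{n/2})=-1$. Finally, $\upeta((-1)^{n/2})=-1$ holds if and only if $n/2$ is odd and $\upeta(-1)=-1$, i.e.\ if and only if $n\equiv2\pmod4$ and $p\equiv3\pmod4$, which is the assertion. I expect the only delicate point to be the sign bookkeeping in Step 2: the Davenport--Hasse relation and the reduction of the degree-$mn$ quadratic Gauss sum to $\tilde{p}^{1/2}$, where the parities of $m$, $n$ and of $mn/2$ all intervene. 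An alternative route that sidesteps this would be to establish directly the $\mathbb{F}_{p^{2}}$-maximality of $Y^{p}-Y=X^{2}$ for $p\equiv3\pmod4$ and then propagate it to all even $n$ with $p\mid n$ via Theorem \ref{Theorem: theorem of Serre} together with the supersingularity of $\mathcal{C}$.
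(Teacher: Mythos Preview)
Your proof is correct. The paper itself does not prove this proposition but simply cites it from \cite[Lemma 9.1]{BOUWetal}; you have supplied a self-contained argument. The reduction in Step~1 is justified because $[\mathbb{F}_{q^{n}}:\mathbb{F}_{q^{p}}]=n/p$ is even (as $p$ is odd and $n$ is even), so every element of $\mathbb{F}_{q^{p}}$ is a square in $\mathbb{F}_{q^{n}}$. In Step~2 your identification of $G=\sum_{x}\zeta_{p}^{\mathrm{Tr}(x^{2})}$ with the quadratic Gauss sum over $\mathbb{F}_{q^{n}}$ is the standard computation, the triviality of $\upeta_{mn}$ on $\mathbb{F}_{p}^{\ast}$ holds because $mn$ is even, and the Davenport--Hasse relation together with $g_{1}=\tilde{p}^{1/2}$ and the oddness of $m$ gives the sign exactly as you wrote. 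Your direct Gauss-sum approach is essentially the natural way to handle the curve $Y^{p}-Y=X^{2}$ and is in the same spirit as the source \cite{BOUWetal}, which treats Artin--Schreier curves of this type via quadratic forms and character sums; there is no substantive difference in strategy, only in presentation.
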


Therefore, from Theorem \ref{Theorem: theorem of Serre}, the sign in \eqref{Equation: number of rational points on tilX1 without the determination of the sign when p mid n} is obtained.

\begin{lemma}
\label{Lemma: number of rational point in tilX1 when p|n}
Let $n$ be an even positive integer such that $p\mid n$. The curve $\tilde{\mathcal{X}}_{1}$ is $\mathbb{F}_{q^{n}}$-maximal if and only if $p\equiv 3\pmod{4}$ and $n\equiv2\pmod{4}$. In particular, \eqref{Equation: number of rational points on tilX1 without the determination of the sign when p mid n} can be rewritten in the form
\begin{equation*}
\emph{N}_{q^{n}}(\tilde{\mathcal{X}}_{1})=\left\{\begin{array}{cl}
q^{n}+1,&\text{ if }n\text{ is odd}\\
q^{n}+1-\upeta((-1)^{n/2})\cdot 2g(\tilde{\mathcal{X}}_{1})q^{n/2},& \text{ if }n\text{ is even}.
\end{array}\right.
\end{equation*}
\end{lemma}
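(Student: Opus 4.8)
The plan is to deduce the sign by propagating maximality/minimality downward through the covering of Lemma~\ref{Lemma: tilX1 covers Xa} and invoking Serre's Theorem~\ref{Theorem: theorem of Serre} together with the criterion of Proposition~\ref{Proposition: maximality of Xa}. So fix an even positive integer $n$ with $p\mid n$. The first step is to observe that $p\mid n$ forces $\mathbb{F}_{q^{p}}\subseteq\mathbb{F}_{q^{n}}$, so the $\mathbb{F}_{q^{p}}$-rational morphism $\varphi\colon\tilde{\mathcal{X}}_{1}\rightarrow\mathfrak{X}_{\beta}$ from Lemma~\ref{Lemma: tilX1 covers Xa} is in particular $\mathbb{F}_{q^{n}}$-rational; hence $\mathfrak{X}_{\beta}$ is $\mathbb{F}_{q^{n}}$-covered by $\tilde{\mathcal{X}}_{1}$.

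The second step uses item~$1$ (even case) of Lemma~\ref{Lemma: number of rational points on tilX1 without determining the sign}, which tells us that $\tilde{\mathcal{X}}_{1}$ is either $\mathbb{F}_{q^{n}}$-maximal or $\mathbb{F}_{q^{n}}$-minimal. Thus the hypothesis of Serre's Theorem~\ref{Theorem: theorem of Serre} is met, and we conclude that $\tilde{\mathcal{X}}_{1}$ is $\mathbb{F}_{q^{n}}$-maximal if and only if $\mathfrak{X}_{\beta}$ is $\mathbb{F}_{q^{n}}$-maximal. By Proposition~\ref{Proposition: maximality of Xa}, the latter occurs exactly when $\upeta((-1)^{n/2})=-1$, equivalently $p\equiv 3\pmod 4$ and $n\equiv 2\pmod 4$, which is the asserted characterization.

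For the third step I would translate this into the sign in \eqref{Equation: number of rational points on tilX1 without the determination of the sign when p mid n}. If $\upeta((-1)^{n/2})=-1$, then $\tilde{\mathcal{X}}_{1}$ is $\mathbb{F}_{q^{n}}$-maximal, so $\text{N}_{q^{n}}(\tilde{\mathcal{X}}_{1})=q^{n}+1+2g(\tilde{\mathcal{X}}_{1})q^{n/2}=q^{n}+1-\upeta((-1)^{n/2})\cdot 2g(\tilde{\mathcal{X}}_{1})q^{n/2}$; if instead $\upeta((-1)^{n/2})=1$, then $\tilde{\mathcal{X}}_{1}$ is not $\mathbb{F}_{q^{n}}$-maximal, hence $\mathbb{F}_{q^{n}}$-minimal by the dichotomy above, and $\text{N}_{q^{n}}(\tilde{\mathcal{X}}_{1})=q^{n}+1-2g(\tilde{\mathcal{X}}_{1})q^{n/2}=q^{n}+1-\upeta((-1)^{n/2})\cdot 2g(\tilde{\mathcal{X}}_{1})q^{n/2}$ again; the case $n$ odd is untouched from Lemma~\ref{Lemma: number of rational points on tilX1 without determining the sign}. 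This assembly is essentially bookkeeping; the only point demanding any care — and the closest thing to an obstacle — is making sure the field over which $\varphi$ is defined actually embeds into $\mathbb{F}_{q^{n}}$ so that Serre's theorem may be applied, which is guaranteed precisely by the hypothesis $p\mid n$.
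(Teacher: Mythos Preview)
Your proposal is correct and follows essentially the same route as the paper: use the $\mathbb{F}_{q^{p}}$-rational cover $\varphi\colon\tilde{\mathcal{X}}_{1}\to\mathfrak{X}_{\beta}$ from Lemma~\ref{Lemma: tilX1 covers Xa}, observe that $p\mid n$ makes it $\mathbb{F}_{q^{n}}$-rational, and then combine the maximal/minimal dichotomy of Lemma~\ref{Lemma: number of rational points on tilX1 without determining the sign} with Serre's Theorem~\ref{Theorem: theorem of Serre} and Proposition~\ref{Proposition: maximality of Xa} to pin down the sign. Your explicit case split on $\upeta((-1)^{n/2})$ in the third step is slightly more detailed than the paper's one-line ``Therefore, from Theorem~\ref{Theorem: theorem of Serre}\ldots'', but the argument is the same.
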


\subsubsection{The Case $p\nmid n$}

Let $s$ be the period of $\tilde{\mathcal{X}}_{1}$ over $\mathbb{F}_{q}$. From Lemmas \ref{Lemma: number of rational points on tilX1 without determining the sign} and \ref{Lemma: number of rational point in tilX1 when p|n}, 
\begin{equation}
\label{Equation: period of tilX1}
s=\left\{\begin{array}{ll}
2p,& \text{ if }\upeta(-1)=1\\
4p,& \text{ if }\upeta(-1)=-1.
\end{array}\right.
\end{equation}

Since $\gcd(n,s)=1$, for each odd positive integer $n$ such that $p\nmid n$, and $\text{N}_{q}(\tilde{\mathcal{X}}_{1})=qp+1$ from Lemma \ref{Lemma: number of rational points on tilX1 without determining the sign}, the sign in \eqref{Equation: number of rational points on tilX1 without the determination of the sign when p nmid n} is obtained via a straightforward application of Theorem \ref{Theorem: Gary McGuire and Yilmaz' theorem}.

\begin{lemma}
\label{Lemma: number of rational point in tilX1 when p nmid n}
If $p\nmid n$, then 
\begin{equation*}
\emph{N}_{q^{n}}(\tilde{\mathcal{X}}_1)=\left\{\begin{array}{cl}
q^{n}+1+\upeta((-1)^{(n-1)/2}n)\cdot 2g(\tilde{\mathcal{X}}_1)q^{n/2}p^{-1/2},& \text{ if }n\text{ is odd}\\
q^{n}+1,& \text{ if }n\text{ is even}.
\end{array}\right.
\end{equation*}
\end{lemma}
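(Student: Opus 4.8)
The plan is to obtain the odd case from a single application of Theorem~\ref{Theorem: Gary McGuire and Yilmaz' theorem}, the even case requiring nothing new: Lemma~\ref{Lemma: number of rational points on tilX1 without determining the sign} already gives $\text{N}_{q^n}(\tilde{\mathcal{X}}_1)=q^n+1$ whenever $n$ is even and $p\nmid n$. So from now on assume $n$ is odd and $p\nmid n$.

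First I would check that $\gcd(n,s)=1$. By \eqref{Equation: period of tilX1} the period $s$ equals $2p$ or $4p$, so its only prime divisors are $2$ and $p$; since $n$ is odd and $p\nmid n$, neither divides $n$. Hence, in the notation of Theorem~\ref{Theorem: Gary McGuire and Yilmaz' theorem}, $n_1=\gcd(n,s)=1$ and $k=n$.

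Next I would identify the applicable case of Theorem~\ref{Theorem: Gary McGuire and Yilmaz' theorem}. Since $n_1 m=m=2t-1$ is odd and $p\nmid k=n$, we are in case~$3$, which yields
\begin{equation*}
\text{N}_{q^n}(\tilde{\mathcal{X}}_1)-(q^n+1)=\upeta((-1)^{(n-1)/2}n)\cdot q^{(n-1)/2}\bigl[\text{N}_{q}(\tilde{\mathcal{X}}_1)-(q+1)\bigr].
\end{equation*}
By Lemma~\ref{Lemma: number of rational points on tilX1 without determining the sign}, $\text{N}_{q}(\tilde{\mathcal{X}}_1)=qp+1$, so the bracketed term is $q(p-1)$. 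Finally, using $q=p^{2t-1}$ and $g(\tilde{\mathcal{X}}_1)=p^{t}(p-1)/2$, I would rewrite
\begin{equation*}
q^{(n-1)/2}\cdot q(p-1)=q^{(n+1)/2}(p-1)=p^{t-1/2}(p-1)\,q^{n/2}=2g(\tilde{\mathcal{X}}_1)\,q^{n/2}p^{-1/2},
\end{equation*}
which is precisely the claimed expression for odd $n$.

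The genuine content here was already produced upstream: the value $\text{N}_{q}(\tilde{\mathcal{X}}_1)=qp+1$ and the supersingularity of $\tilde{\mathcal{X}}_1$ in Lemma~\ref{Lemma: number of rational points on tilX1 without determining the sign}, and the exact period $s$ via Lemma~\ref{Lemma: number of rational point in tilX1 when p|n} together with \eqref{Equation: period of tilX1}. The only step needing care is the final display, where the identity $q^{1/2}=p^{t-1/2}$, that is, $m=2t-1$, is exactly what makes the factor $p^{-1/2}$ come out correctly; beyond that the argument is purely a matter of matching notation.
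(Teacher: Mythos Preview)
Your proposal is correct and follows essentially the same approach as the paper: the even case is read off from Lemma~\ref{Lemma: number of rational points on tilX1 without determining the sign}, and for odd $n$ with $p\nmid n$ one observes $\gcd(n,s)=1$, invokes case~3 of Theorem~\ref{Theorem: Gary McGuire and Yilmaz' theorem} with $n_1=1$, and plugs in $\text{N}_q(\tilde{\mathcal{X}}_1)=qp+1$. The paper states this more tersely, but your version spells out the same computation.
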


\subsection{Conclusion}\label{Subsection: Step 3 - Conclusion}

Theorem \ref{Theorem: the number of Fqn rational points on XGS} follows from Lemmas \ref{Lemma: number of points on XGS as a function of the number of points on tilX1}, \ref{Lemma: number of rational point in tilX1 when p|n}, and \ref{Lemma: number of rational point in tilX1 when p nmid n}.
\end{proof}

\section{Proof of Theorem \ref{Theorem: the L-polynomial of XGS}}\label{Section: Proof of Theorem 2}

\begin{proof}[\unskip\nopunct]
From Proposition \ref{Proposition: relation between intermediate fields and the number of rational points in elementary abelian p-extensions} and Lemma \ref{Lemma: number of points on XGS as a function of the number of points on tilX1},
\begin{equation}
\label{Equation: the L-polynomial in the p-elementary abelian extension}
L_{\mathcal{X}_{\mathcal{G}_{\mathcal{S}}}/\mathbb{F}_{q}}(T)=\bigg(L_{\tilde{\mathcal{X}}_{1}/\mathbb{F}_{q}}(T)\bigg)^{\frac{q-1}{p-1}},
\end{equation}
which shows that $L_{\mathcal{X}_{\mathcal{G}_{\mathcal{S}}}/\mathbb{F}_{q}}(T)$ is essentially determined by $L_{\tilde{\mathcal{X}}_{1}/\mathbb{F}_{q}}(T)$. Considering $M_{\tilde{\mathcal{X}}_{1}/\mathbb{F}_{q}}(T)$ the reciprocal of the polynomial $L_{\tilde{\mathcal{X}}_{1}/\mathbb{F}_{q}}(q^{-1/2}T)\in\mathbb{Q}(p^{1/2})[T]$, then
\begin{equation*}
M_{\tilde{\mathcal{X}}_{1}/\mathbb{F}_{q}}(T)=\prod_{i=1}^{2g(\tilde{\mathcal{X}}_{1})}(T-\xi_{i}),
\end{equation*}
where, for $i=1,\ldots,2g(\tilde{\mathcal{X}}_{1})$, the elements $\xi_{i}\in\mathbb{C}$ satisfy
\begin{equation}
\label{Equation: number of rational points in tilX1 and Weil numbers}
-q^{-n/2}\bigg[\text{N}_{q^{n}}(\tilde{\mathcal{X}}_1)-(q^{n}+1)\bigg]=\sum_{i=1}^{2g(\tilde{\mathcal{X}}_{1})}\xi_{i}^{n}.
\end{equation}

Note that $M_{\tilde{\mathcal{X}}_{1}/\mathbb{F}_{q}}(T)$ is a self-reciprocal polynomial; that is, $$M_{\tilde{\mathcal{X}}_{1}/\mathbb{F}_{q}}(T)=L_{\tilde{\mathcal{X}}_{1}/\mathbb{F}_{q}}(q^{-1/2}T).$$ Thus to determine the L-polynomial $L_{\tilde{\mathcal{X}}_{1}/\mathbb{F}_{q}}(T)$, it is sufficient to describe the polynomial $M_{\tilde{\mathcal{X}}_{1}/\mathbb{F}_{q}}(T)$. Further, since $\tilde{\mathcal{X}}_{1}$ is a supersingular curve with period $s$ given by \eqref{Equation: period of tilX1}, the following holds
\begin{equation}
\label{Equation: period of tilX1 and Weil numbers}
\min\bigg\{n\,\,:\,\,\xi_{i}^{n}=1\text{ for all }i=1,\ldots,2g(\tilde{\mathcal{X}}_{1})\bigg\}=\left\{\begin{array}{ll}
2p, \text{ if }\upeta(-1)=1\\
4p, \text{ if }\upeta(-1)=-1.
\end{array}\right.
\end{equation}

From \eqref{Equation: period of tilX1 and Weil numbers} and Lemma \ref{Lemma: number of rational point in tilX1 when p|n}, 
\begin{equation*}
\xi_{i}^{2}\in\left\{\begin{array}{cl}
\mu_{p}\cup\{1\},& \text{ if }\upeta(-1)=1\\
\mu_{2p}\cup\{-1\},& \text{ if }\upeta(-1)=-1,
\end{array}\right.
\end{equation*}
 and so
\begin{equation*}
\sum\limits_{i=1}^{2g(\tilde{\mathcal{X}}_{1})}\xi_{i}^{2}=r_{0}\bigg(\upeta(-1)\cdot 1\bigg)+r_{1}\bigg(\upeta(-1)\cdot \zeta_{p}\bigg)+\cdots+r_{p-1}\bigg(\upeta(-1)\cdot\zeta_{p}^{p-1}\bigg),
\end{equation*}
with $r_{i}\in\mathbb{N}$ being such that $\displaystyle{\sum_{i=0}^{p-1}r_{i}=2g(\tilde{\mathcal{X}}_{1})=p^{t}(p-1)}$. Also, \eqref{Equation: number of rational points in tilX1 and Weil numbers} and Lemma \ref{Lemma: number of rational point in tilX1 when p nmid n} imply that $$\displaystyle{\sum_{i=1}^{2g(\tilde{\mathcal{X}}_{1})}\xi_{i}^{2}=0},$$ and then $r_{0}=r_{1}=\cdots=r_{p-1}=p^{t-1}(p-1)$. Thus
\begin{equation}
\label{Equation: factorization of polynomial MX1}
M_{\tilde{\mathcal{X}}_{1}/\mathbb{F}_{q}}(T)=\prod\limits_{i=1}^{2g(\tilde{\mathcal{X}}_{1})}(T-\xi_{i})=\prod_{i=0}^{p-1}(T-\lambda_{i})^{m_{i}}(T+\lambda_i)^{n_{i}},
\end{equation}
where $m_{i},n_{i}\in\mathbb{N}$ are such that $m_{i}+n_{i}=p^{t-1}(p-1)$, and $\pm \lambda_{i}$ are the square roots of the $p$ elements in 
\begin{equation*}
\left\{\begin{array}{cl}
\mu_{p}\cup\{1\},& \text{ if }\upeta(-1)=1\\
\mu_{2p}\cup\{-1\},& \text{ if }\upeta(-1)=-1
\end{array}\right.
\end{equation*}
for each $i=0,\ldots,p-1$. Label the elements $\lambda_{i}$ in a way that 
\begin{equation*}
\lambda_{i}=\left\{\begin{array}{rl}
\zeta_{p}^{i},& \text{ if }\upeta(-1)=1\\
\textbf{i}\zeta_{p}^{i},& \text{ if }\upeta(-1)=-1.
\end{array}\right.
\end{equation*}

Furthermore, from \eqref{Equation: number of rational points in tilX1 and Weil numbers} and Lemmas \ref{Lemma: number of rational point in tilX1 when p|n}, \ref{Lemma: number of rational point in tilX1 when p nmid n}, $\displaystyle{\sum_{i=1}^{2g(\tilde{\mathcal{X}}_{1})}\xi_{i}^{p}=0}$ and $$\displaystyle{\sum\limits_{i=1}^{2g(\tilde{\mathcal{X}}_{1})}\xi_{i}=-p^{t-1}(p-1)p^{1/2}}.$$ 

Since \eqref{Equation: period of tilX1 and Weil numbers} and Lemma \ref{Lemma: number of rational point in tilX1 when p|n} imply that 
\begin{equation*}
\xi_{i}^{p}=\left\{\begin{array}{cl}
\pm 1,& \text{ if }\upeta(-1)=1\\
\pm \textbf{i},& \text{ if }\upeta(-1)=-1,
\end{array}\right.
\end{equation*}
by the choice of $\lambda_{i}$ and equation \eqref{Equation: factorization of polynomial MX1},
\begin{eqnarray}
\label{Equation: first relation}
g(\tilde{\mathcal{X}}_{1})&=&\sum\limits_{i=0}^{p-1}m_{i}=\sum\limits_{i=0}^{p-1}n_{i}\\
-p^{t-1}(p-1)p^{1/2}&=&\sum\limits_{i=0}^{p-1}(m_{i}-n_{i})\lambda_{i}.
\end{eqnarray}

From the Quadratic Gauss Sum \cite[Theorem $5.15$]{LN}, 
\begin{equation}
p^{1/2}=\sum_{i=0}^{p-1}\upeta(-i)\lambda_{i}.
\end{equation}
Hence \eqref{Equation: first relation} and $m_{i}+n_{i}=p^{t-1}(p-1)$ yield $n_{0}=m_{0}=\frac{p^{t-1}(p-1)}{2}$, and considering $1\leqslant i \leqslant p-1$,
$$
(m_{i},n_{i})=\left\{\begin{array}{rl}
(0,p^{t-1}(p-1)),  &\text{if } \upeta(-i)=1\\
\text{}&\\
(p^{t-1}(p-1), 0),  &\text{if } \upeta(-i)=-1.
\end{array}\right.
$$
In particular, $M_{\tilde{\mathcal{X}}_{1}/\mathbb{F}_{q}}(T)$ in \eqref{Equation: factorization of polynomial MX1} becomes
\begin{equation*}
\displaystyle{(T^{2}-\upeta(-1))^{\frac{p^{t-1}(p-1)}{2}}\bigg(\prod_{\upeta(i)=1}(T+\upeta(-1)\lambda_{i})\prod_{\upeta(i)=-1}(T-\upeta(-1)\lambda_{i})\bigg)^{p^{t-1}(p-1)}}.
\end{equation*}

Let $\sigma_{i}\in \text{Gal}(\mathbb{Q}(\zeta_{p})/\mathbb{Q})$ be such that $\sigma_{i}(\zeta_{p})=\zeta_{p}^{i}$, with $1\leqslant i\leqslant p-1$. Then, from the Quadratic Gauss Sum
$$
\sigma_{i}(\tilde{p}^{1/2})=\upeta(i)\cdot\tilde{p}^{1/2},
$$
where $\tilde{p}^{1/2}\in\mathbb{Q}(\zeta_{p})$ is defined as in \eqref{Equation: definition of p*1/2}, and  
$$
\sigma_{i}(-\zeta_{p}/\tilde{p}^{1/2})=-\upeta(-i)\lambda_{i}/p^{1/2}=-\upeta(i)\cdot\upeta(-1)\lambda_{i}/p^{1/2}
$$
for each $i\in\{1,\ldots,p-1\}$ yields
\begin{eqnarray*}
&&\prod_{\upeta(i)=1}(p^{1/2}T+\upeta(-1)\lambda_{i})\prod_{\upeta(i)=-1}(p^{1/2}T-\upeta(-1)\lambda_{i})\\
&=&p^{(p-1)/2}\prod_{\upeta(i)=1}(T+\upeta(-1)\lambda_{i}/p^{1/2})\prod_{\upeta(i)=-1}(T-\upeta(-1)\lambda_{i}/p^{1/2})\\
&=&p^{(p-1)/2}\prod_{i=1}^{p-1}\bigg(T-\sigma_{i}(-\zeta_{p}/\tilde{p}^{1/2})\bigg)\\
&=&p^{(p-1)/2}\prod_{\sigma\in \text{Gal}(\mathbb{Q}(\zeta_{p})/\mathbb{Q})}\bigg(T-\sigma(-\zeta_{p}/\tilde{p}^{1/2})\bigg)\\
&=&p^{(p-1)/2}\mathcal{M}_{p}(T),
\end{eqnarray*}
where $\mathcal{M}_{p}(T)$ is the minimal polynomial over $\mathbb{Q}$ of $-\zeta_{p}/\tilde{p}^{1/2}$. Therefore, \eqref{Equation: the L-polynomial over Fq} follows from \eqref{Equation: the L-polynomial in the p-elementary abelian extension} and the equalities $$L_{\tilde{\mathcal{X}}_{1}/\mathbb{F}_{q}}(T)=M_{\tilde{\mathcal{X}}_{1}/\mathbb{F}_{q}}(q^{1/2}T)=M_{\tilde{\mathcal{X}}_{1}/\mathbb{F}_{q}}(p^{t-1}p^{1/2}T).$$ Further, \eqref{Equation: number of rational points in tilX1 and Weil numbers},
$$
M_{\tilde{\mathcal{X}}_{1}/\mathbb{F}_{q^{n}}}(T)=\prod_{i=1}^{2g(\tilde{\mathcal{X}}_{1})}(T-\xi_{i}^{n}),
$$
where $M_{\tilde{\mathcal{X}}_{1}/\mathbb{F}_{q^{n}}}(T)$ is the reciprocal of the polynomial $L_{\tilde{\mathcal{X}}_{1}/\mathbb{F}_{q^{n}}}(q^{-n/2}T)$, and a straightforward calculation give the second part o Theorem \ref{Theorem: the L-polynomial of XGS}.
\end{proof}

\section{Proof of Theorem \ref{Theorem: the automorphism group of XGS}}\label{Section: Proof of Theorem 3}

\begin{proof}[\unskip\nopunct]
Consider the notation as in Subsections \ref{Subsection: The curve GS} and \ref{Subsection: Automorphism group}. Let $\text{Aut}_{Q_{\infty}}(\mathcal{X}_{\mathcal{G}_{\mathcal{S}}})$ be the stabilizer of $Q_{\infty}\in\mathcal{X}_{\mathcal{G}_{\mathcal{S}}}$, $\text{Aut}_{Q_{\infty}}^{(1)}(\mathcal{X}_{\mathcal{G}_{\mathcal{S}}})$ be the unique Sylow $p$-subgroup of $\text{Aut}_{Q_{\infty}}(\mathcal{X}_{\mathcal{G}_{\mathcal{S}}})$, and $\mathbb{H}$ be the cyclic complement of $\text{Aut}_{Q_{\infty}}^{(1)}(\mathcal{X}_{\mathcal{G}_{\mathcal{S}}})$ in $\text{Aut}_{Q_{\infty}}(\mathcal{X}_{\mathcal{G}_{\mathcal{S}}})$.

Let $\mathfrak{G}$ be the set of maps on $\overline{\mathbb{F}_{q}}(\mathcal{X}_{\mathcal{G}_{\mathcal{S}}})=\overline{\mathbb{F}_{q}}(x,y)$ given by
\begin{equation*}
(x,y)\mapsto (\alpha x+\beta,\alpha \beta^{q_{0}}x+\alpha^{q_{0}+1}y+\gamma),
\end{equation*}
where $\alpha\in\mathbb{F}_{q}^{\ast}$ and $\beta,\gamma\in\mathbb{F}_{q}$. One can check that $\mathfrak{G}$ is a subgroup of $\text{Aut}(\mathcal{X}_{\mathcal{G}_{\mathcal{S}}})$ of order $q^{2}(q-1)$. Moreover, the following lemma holds.

\begin{lemma}
\label{Lemma: G is equal to the stabilizer of Qinfty}
$\mathfrak{G}=\emph{Aut}_{Q_{\infty}}(\mathcal{X}_{\mathcal{G}_{\mathcal{S}}})$.
\end{lemma}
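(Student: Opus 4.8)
The plan is to show the two inclusions $\mathfrak{G}\subseteq\text{Aut}_{Q_{\infty}}(\mathcal{X}_{\mathcal{G}_{\mathcal{S}}})$ and $\text{Aut}_{Q_{\infty}}(\mathcal{X}_{\mathcal{G}_{\mathcal{S}}})\subseteq\mathfrak{G}$, the first being essentially bookkeeping and the second being the real content. For the first inclusion, one checks that the given maps are well-defined automorphisms (already asserted above) and that each fixes $Q_{\infty}$: since $Q_{\infty}$ is the unique place over $\mathcal{P}_{\infty}$ and each map in $\mathfrak{G}$ induces an automorphism of $\overline{\mathbb{F}_{q}}(x)$ of the form $x\mapsto \alpha x+\beta$ fixing $\mathcal{P}_{\infty}$, the place $Q_{\infty}$ must be fixed by functoriality. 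Hence $\mathfrak{G}\subseteq\text{Aut}_{Q_{\infty}}(\mathcal{X}_{\mathcal{G}_{\mathcal{S}}})$.

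For the reverse inclusion I would argue via the pole structure at $Q_{\infty}$ using Proposition \ref{Proposition: some Riemann-Roch spaces}. Let $\sigma\in\text{Aut}_{Q_{\infty}}(\mathcal{X}_{\mathcal{G}_{\mathcal{S}}})$. First compute the pole orders: from $v_{Q_{\infty}}(x)=-q$ and $v_{Q_{\infty}}(y)=-(q+q_0)$ (which follow from Proposition \ref{Proposition: totally ramified place} and the defining equation, since $\mathcal{P}_{\infty}$ is totally ramified of degree $q$ and $y^q-y=x^{q_0}(x^q-x)$ forces $qv_{Q_{\infty}}(y)=v_{Q_{\infty}}(x^{q_0+q})$). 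Because $\sigma$ fixes $Q_{\infty}$, it preserves every Riemann--Roch space $\mathscr{L}(kQ_{\infty})$. By Proposition \ref{Proposition: some Riemann-Roch spaces}, $\mathscr{L}(qQ_{\infty})=\langle 1,x\rangle$, so $\sigma(x)=ax+b$ with $a\in\overline{\mathbb{F}_{q}}^{\ast}$, $b\in\overline{\mathbb{F}_{q}}$; and $\mathscr{L}((q+q_0)Q_{\infty})=\langle 1,x,y\rangle$, so $\sigma(y)=cy+dx+e$ with $c\in\overline{\mathbb{F}_{q}}^{\ast}$ (the coefficient of $y$ is nonzero because $\sigma(y)$ must still have pole order exactly $q+q_0$, no smaller pole order being a multiple of that achievable by $x$-terms alone). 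The task is then to plug $\sigma(x),\sigma(y)$ into $\sigma(y)^q-\sigma(y)=\sigma(x)^{q_0}(\sigma(x)^q-\sigma(x))$ and compare coefficients of the resulting identity in $\overline{\mathbb{F}_{q}}(x,y)$, using that $1,x,y$ and the monomials appearing are linearly independent over $\overline{\mathbb{F}_{q}}$ modulo the defining relation. This forces $c=a^{q_0+1}$, $d=a b^{q_0}$, $e\in\overline{\mathbb{F}_{q}}$ arbitrary, and crucially $a,b\in\mathbb{F}_{q}$ (the field of constants constraint comes from requiring $ax+b$ and the rest to lie in $\overline{\mathbb{F}_{q}}(x,y)$ with the relation preserved — comparing, e.g., the $x^{q_0+1}$ and $x^{q+q_0}$ coefficients yields $a^{q_0+1}=a^{q+q_0}$, i.e. $a^{q-1}=1$, so $a\in\mathbb{F}_{q}^{\ast}$, and similarly for $b$; then $e^q-e=0$ forces $e\in\mathbb{F}_p\subseteq\mathbb{F}_q$, though one can also absorb $e$ into the orbit freely). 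This exhibits $\sigma$ as an element of $\mathfrak{G}$.

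The main obstacle will be the coefficient-comparison step: after substituting $\sigma(x)=ax+b$ and $\sigma(y)=cy+dx+e$ one must expand $(cy+dx+e)^q-(cy+dx+e)$ and $(ax+b)^{q_0}((ax+b)^q-(ax+b))$, reduce using $y^q-y=x^{q_0}(x^q-x)$, and match. Because everything is additive in $q$-th powers and $q_0$-th powers over characteristic $p$, the expansion $(cy+dx+e)^q = c^q y^q + d^q x^q + e^q$ simplifies dramatically, and $(ax+b)^{q_0}=a^{q_0}x^{q_0}+b^{q_0}$, so the identity becomes
\begin{equation*}
c^q(x^{q_0}(x^q-x)+y) + d^q x^q + e^q - cy - dx - e = (a^{q_0}x^{q_0}+b^{q_0})(a^q x^q + b^q - ax - b).
\end{equation*}
Matching the coefficient of $y$ gives $c^q = c$... no, rather one matches $y$: left side has $c^q - c$ times... careful: actually $c^q y - c y$, but $c^q=c$ need not hold; instead use that $y$ appears only through $y^q$ on the left after reduction — so the coefficient of $y$ on the left is $-c$, forcing reconsideration. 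The cleaner route is to keep $y^q$ unreduced until the end or to note $v_{Q_{\infty}}$-valuations term by term. Regardless, the algebra is elementary in characteristic $p$; the one genuine subtlety is justifying that $a,b\in\mathbb{F}_q$ (not merely in $\overline{\mathbb{F}_{q}}$), which follows from the $a^{q-1}=1$ and $b^{q}=b$ type relations that drop out of matching the top-degree monomials $x^{q+q_0}$ and $x^{q_0+1}$. Once $\sigma\in\mathfrak{G}$ is established, combined with the first inclusion we conclude $\mathfrak{G}=\text{Aut}_{Q_{\infty}}(\mathcal{X}_{\mathcal{G}_{\mathcal{S}}})$, completing the lemma.
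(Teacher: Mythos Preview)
Your approach is exactly the paper's: use Proposition~\ref{Proposition: some Riemann-Roch spaces} to force $\sigma(x)=ax+b$ and $\sigma(y)=cy+dx+e$, substitute into the defining relation, and compare coefficients. Two small corrections. First, $e^{q}-e=0$ gives $e\in\mathbb{F}_{q}$, not $e\in\mathbb{F}_{p}$; this matters since $\mathfrak{G}$ allows $\gamma$ to range over all of $\mathbb{F}_{q}$. Second, your tangle over whether to reduce $y^{q}$ disappears if you do what the paper does and work in the polynomial ring: the substituted expression in $\overline{\mathbb{F}_{q}}[X,Y]$ must equal $\delta\cdot\bigl(Y^{q}-Y-X^{q_{0}}(X^{q}-X)\bigr)$ for some $\delta\in\overline{\mathbb{F}_{q}}^{\ast}$, and then the $Y^{q}$ and $Y$ coefficients give $c^{q}=\delta=c$ (so $c\in\mathbb{F}_{q}^{\ast}$) immediately, with the remaining monomials $X^{q+q_{0}}$, $X^{q_{0}+1}$, $X^{q_{0}}$, $X^{q}$, $X$, and the constant term yielding the rest just as you outline.
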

\begin{proof}\renewcommand{\qedsymbol}{$\square$}
The inclusion $\mathfrak{G}\subseteq\text{Aut}_{Q_{\infty}}(\mathcal{X}_{\mathcal{G}_{\mathcal{S}}})$ is straightforward. To show the other inclusion, let $\sigma\in \text{Aut}_{Q_{\infty}}(\mathcal{X}_{\mathcal{G}_{\mathcal{S}}})$. From Proposition \ref{Proposition: some Riemann-Roch spaces}, $\{1,x\}$ and $\{1,x,y\}$ are bases for the Riemann--Roch spaces $\mathscr{L}(qQ_{\infty})$ and $\mathscr{L}((q+q_0)Q_{\infty})$, respectively. Therefore, $\sigma(x)\in\mathscr{L}(qQ_{\infty})$, $\sigma(y)\in\mathscr{L}((q+q_0)Q_{\infty})$, and
\begin{equation*}
\sigma(x)=\alpha x+\gamma\text{ and }\sigma(y)=\alpha_{1}x+\beta_{1}y+\gamma_{1}
\end{equation*}
for some $\alpha,\alpha_{1},\beta_{1},\gamma,\gamma_{1}\in\overline{\mathbb{F}_{q}}$, with $\alpha\beta_{1}\neq 0$. Since $\sigma$ is an $\overline{\mathbb{F}_{q}}$-automorphism,
\begin{equation*}
0=\sigma(0)=\sigma(y^{q}-y-x^{q_{0}}(x^{q}-x))=\sigma(y)^{q}-\sigma(y)-\sigma(x)^{q_{0}}(\sigma(x)^{q}-\sigma(x)),
\end{equation*}
and thus
\begin{equation*}
\left.\begin{array}{c}
(\alpha_{1}X+\beta_{1}Y+\gamma_{1})^{q}-(\alpha_{1}X+\beta_{1}Y+\gamma_{1})-(\alpha X+\gamma)^{q_{0}}((\alpha X+\gamma)^{q}-(\alpha X+\gamma))\\
=\delta(Y^{q}-Y-X^{q_{0}}(X^{q}-X))
\end{array}\right.
\end{equation*}
for some $\delta \in \overline{\mathbb{F}_q}^{\ast}$. Therefore, comparing the coefficients on both sides of the previous equality, the following is obtained
\begin{itemize}
\item $\alpha\in\mathbb{F}_{q}^{\ast}$
\item $\gamma\in\mathbb{F}_{q}$
\item $\alpha_{1}=\alpha\gamma^{q_{0}}\in\mathbb{F}_{q}$
\item $\beta_{1}=\alpha^{q_{0}+1}$
\item $\gamma_{1}\in\mathbb{F}_{q}$,
\end{itemize}
which completes the proof.
\end{proof}

Consider the subgroup of $\text{Aut}_{Q_{\infty}}(\mathcal{X}_{\mathcal{G}_{\mathcal{S}}})$ consisting of the maps
\begin{equation*}
(x,y)\mapsto (x+\beta,\beta^{q_{0}}x+y+\gamma),
\end{equation*}
where $\beta,\gamma\in\mathbb{F}_{q}$, which is a Sylow $p$-subgroup of $\text{Aut}_{Q_{\infty}}(\mathcal{X}_{\mathcal{G}_{\mathcal{S}}})$ (of order $q^{2}$). From Theorem \ref{Theorem: GQ is the semi-direct product of its Sylow p-subgroup and a normal cyclic group}, this describes the subgroup $\text{Aut}_{Q_{\infty}}^{(1)}(\mathcal{X}_{\mathcal{G}_{\mathcal{S}}})$. Also, the cyclic complement $\mathbb{H}$ of $\text{Aut}_{Q_{\infty}}^{(1)}(\mathcal{X}_{\mathcal{G}_{\mathcal{S}}})$ in $\text{Aut}_{Q_{\infty}}(\mathcal{X}_{\mathcal{G}_{\mathcal{S}}})$ can be given by the maps
\begin{equation*}
(x,y)\mapsto (\alpha x,\alpha^{q_{0}+1}y),
\end{equation*}
where $\alpha\in\mathbb{F}_{q}^{\ast}$, which has order $q-1$, and
\begin{equation*}
\text{Aut}_{Q_{\infty}}(\mathcal{X}_{\mathcal{G}_{\mathcal{S}}})=\text{Aut}_{Q_{\infty}}^{(1)}(\mathcal{X}_{\mathcal{G}_{\mathcal{S}}})\rtimes\mathbb{H}.
\end{equation*}

Finally, from Theorem \ref{Theorem: sufficient condition for the stabilizer of a point be the entire group of automorphisms}, $\text{Aut}(\mathcal{X}_{\mathcal{G}_{\mathcal{S}}})=\text{Aut}_{Q_{\infty}}(\mathcal{X}_{\mathcal{G}_{\mathcal{S}}})$, which completes the proof. Indeed $|\text{Aut}_{Q_{\infty}}^{(1)}(\mathcal{X}_{\mathcal{G}_{\mathcal{S}}})|=q^{2}>q_{0}(q-1)+1=2g+1$, where $g$ is the genus of $\mathcal{X}_{\mathcal{G}_{\mathcal{S}}}$, and also, since $p\neq 2$, a comparison of genus (when $t>1$) and inflection points (when $m=t=1$) shows that $\mathcal{X}_{\mathcal{G}_{\mathcal{S}}}$ is not birationally equivalent to any of the curves \eqref{Equation: the Hermitian curve}, \eqref{Equation: the Suzuki curve}, and \eqref{Equation: the Ree curve}.
\end{proof}

\section{Applications, Examples, and Remarks}\label{Section: Applications, Examples, and Remarks}

\noindent The following result regarding Hilbert class fields of curves can be found in \cite[p. 367]{ROSEN} and \cite[Ch. VI, Sect. 2(8)]{SERRE}.

\begin{theorem}
\label{Theorem: main result of [34]}
Let $\mathcal{Y}$ be a curve defined over $\mathbb{F}_{q}$ of genus $g(\mathcal{Y})\geqslant 2$. Assume that $\mathcal{Y}$ has a rational point $P_{0}\in\mathcal{Y}(\mathbb{F}_{q})$, and suppose that $\mathcal{Y}$ is embedded in its Jacobian $J_{\mathcal{Y}}$ by considering
$$
P_{0}\mapsto 0.
$$
If $\langle \mathcal{Y}(\mathbb{F}_{q}) \rangle \varsubsetneq J_{\mathcal{Y}}(\mathbb{F}_{q})$, then, for each divisor $i$ of $[J_{\mathcal{Y}}(\mathbb{F}_{q}):\langle \mathcal{Y}(\mathbb{F}_{q}) \rangle]$, there exists an \'{e}tale cover $\mathcal{Z}$ of $\mathcal{Y}$ of degree $i$ and genus $g(\mathcal{Z})=i\cdot(g(\mathcal{Y})-1)+1$, which satisfies $\emph{N}_{q}(\mathcal{Z})\geqslant i\cdot\emph{N}_{q}(\mathcal{Y})$.
\end{theorem}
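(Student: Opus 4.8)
The plan is to prove Theorem~\ref{Theorem: main result of [34]} by invoking the dictionary between finite abelian unramified (\'etale) coverings of a curve and subgroups of the divisor class group, i.e.\ the geometric analogue of class field theory. First I would recall the setup: since $\mathcal{Y}$ has a rational point $P_0\in\mathcal{Y}(\mathbb{F}_q)$, the embedding $\mathcal{Y}\hookrightarrow J_{\mathcal{Y}}$ with $P_0\mapsto 0$ induces on $\mathbb{F}_q$-rational points a homomorphism $\mathcal{Y}(\mathbb{F}_q)\to J_{\mathcal{Y}}(\mathbb{F}_q)$, and $\langle\mathcal{Y}(\mathbb{F}_q)\rangle$ is by definition the subgroup it generates. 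The finite abelian group $A\coloneqq J_{\mathcal{Y}}(\mathbb{F}_q)/\langle\mathcal{Y}(\mathbb{F}_q)\rangle$ is nontrivial by hypothesis, and for each divisor $i$ of $|A|$ we may choose a subgroup $H\subseteq A$ of index $i$ (cyclic quotients of the appropriate order always exist once one passes to a cyclic factor of $A$, but in fact one only needs some subgroup of index $i$, which exists since $i\mid |A|$ and finite abelian groups have subgroups of every order dividing the group order).

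Next I would construct the cover. Let $\pi:J_{\mathcal{Y}}(\mathbb{F}_q)\to A\to A/H$ be the composite, a surjection onto a group of order $i$. Unramified class field theory for the function field $\mathbb{F}_q(\mathcal{Y})$ (as in Rosen or Serre's \emph{Groupes alg\'ebriques et corps de classes}, Ch.~VI) associates to the subgroup $\ker(\pi)\subseteq J_{\mathcal{Y}}(\mathbb{F}_q)$ a Galois \'etale cover $\varphi:\mathcal{Z}\to\mathcal{Y}$ defined over $\mathbb{F}_q$ with $\operatorname{Gal}(\mathcal{Z}/\mathcal{Y})\cong A/H$, hence of degree $i$. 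Because $\langle\mathcal{Y}(\mathbb{F}_q)\rangle\subseteq\ker(\pi)$, every $\mathbb{F}_q$-rational point of $\mathcal{Y}$, viewed via $P_0$ as an element of $J_{\mathcal{Y}}(\mathbb{F}_q)$, maps to the identity under $\pi$; by the compatibility of the Artin reciprocity map with the Frobenius, this means each such point splits completely in $\mathcal{Z}$, so its fibre consists of $i$ distinct $\mathbb{F}_q$-rational points of $\mathcal{Z}$. Summing over $\mathcal{Y}(\mathbb{F}_q)$ gives $\mathrm{N}_q(\mathcal{Z})\geqslant i\cdot\mathrm{N}_q(\mathcal{Y})$. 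The genus formula is then immediate from Riemann--Hurwitz: since $\varphi$ is unramified of degree $i$, $2g(\mathcal{Z})-2=i\,(2g(\mathcal{Y})-2)$, i.e.\ $g(\mathcal{Z})=i\cdot(g(\mathcal{Y})-1)+1$.

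The part needing the most care is the precise statement of the correspondence used in the second paragraph: one must quote the version of unramified geometric class field theory that (a) produces a cover \emph{defined over} $\mathbb{F}_q$ (not merely over $\overline{\mathbb{F}_q}$), which is why the full $\mathbb{F}_q$-rational divisor class group $J_{\mathcal{Y}}(\mathbb{F}_q)$, rather than the geometric Picard group, is the right object, and (b) identifies the decomposition behaviour of a degree-one prime (a rational point) with the image of its class under the reciprocity map, so that ``lies in $\ker\pi$'' translates to ``splits completely.'' Here I would cite \cite[p.~367]{ROSEN} and \cite[Ch.~VI, Sect.~2(8)]{SERRE} for exactly these facts rather than reproving them; the remaining steps (choice of $H$, Riemann--Hurwitz, the point count) are formal. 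One small point to flag: the claim implicitly needs $g(\mathcal{Z})\geqslant 2$ for $\mathcal{Z}$ to again be a curve to which the theorem could be reapplied, but since $i\geqslant 1$ and $g(\mathcal{Y})\geqslant 2$ we get $g(\mathcal{Z})\geqslant g(\mathcal{Y})\geqslant 2$ automatically, and in any case the statement as phrased only asserts the existence, degree, genus, and point bound, all of which follow as above.
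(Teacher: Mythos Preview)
The paper does not prove this theorem; it is quoted as a known result with attribution to \cite[p.~367]{ROSEN} and \cite[Ch.~VI, Sect.~2(8)]{SERRE} and then applied. Your sketch is a correct outline of exactly the class-field-theoretic argument those references contain: pass to a quotient of $J_{\mathcal{Y}}(\mathbb{F}_q)/\langle\mathcal{Y}(\mathbb{F}_q)\rangle$ of order $i$, take the corresponding unramified abelian cover via the Artin map, observe that every $\mathbb{F}_q$-rational point lies in the kernel and hence splits completely, and read off the genus from Riemann--Hurwitz. So there is nothing to compare beyond noting that you have supplied what the paper merely cites.

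One small point worth making explicit, since the paper's convention is that a \emph{curve} is geometrically irreducible: you should remark that $\mathcal{Z}$ has constant field $\mathbb{F}_q$. This follows immediately from the fact that the degree-one place $P_0$ splits completely in $\mathcal{Z}$ (the residue field above $P_0$ is still $\mathbb{F}_q$, so no constant field extension can occur), but without this observation the Riemann--Hurwitz computation and the interpretation of $\mathrm{N}_q(\mathcal{Z})$ would be in doubt.
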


The following example remarkably illustrates Theorem \ref{Theorem: main result of [34]}.

\begin{example}
\begin{proof}[\unskip\nopunct]
Let $p=7$ and $t=1$, and consider $\mathcal{X}_{\mathcal{G}_{\mathcal{S}}}$ embedded in $J_{\mathcal{X}_{\mathcal{G}_{\mathcal{S}}}}$ via
$$
Q_{\infty}\mapsto 0.
$$

Using Magma \cite{MAGMA}, it is possible to check that 
\begin{eqnarray*}
J_{\mathcal{X}_{\mathcal{G}_{\mathcal{S}}}}(\mathbb{F}_{7})&=&\frac{\mathbb{Z}}{1822\mathbb{Z}}+\frac{\mathbb{Z}}{1822\mathbb{Z}}+\frac{\mathbb{Z}}{1822\mathbb{Z}}+\frac{\mathbb{Z}}{1822\mathbb{Z}}+\frac{\mathbb{Z}}{3644\mathbb{Z}}+\frac{\mathbb{Z}}{3644\mathbb{Z}}+\frac{\mathbb{Z}}{3644\mathbb{Z}}\\
&=&\frac{\mathbb{Z}}{1822\mathbb{Z}}+\frac{\mathbb{Z}}{1822\mathbb{Z}}+\frac{\mathbb{Z}}{1822\mathbb{Z}}+\frac{\mathbb{Z}}{1822\mathbb{Z}}+\langle a_{1}\rangle+\langle a_{2}\rangle+\langle a_{3}\rangle,
\end{eqnarray*}
where $a_{i}$ has order $3644=4\cdot 911$ for all $i=1,2,3$, and that the order of each $\mathbb{F}_{7}$-rational point of $\mathcal{X}_{\mathcal{G}_{\mathcal{S}}}$ is $1822=2\cdot 911$. In particular, the order of any element of $\langle \mathcal{X}_{\mathcal{G}_{\mathcal{S}}}(\mathbb{F}_{7}) \rangle$ divides $1822$, which shows that $a_{i}\notin\langle \mathcal{X}_{\mathcal{G}_{\mathcal{S}}}(\mathbb{F}_{7}) \rangle$ for each $i=1,2,3$, and $[J_{\mathcal{X}_{\mathcal{G}_{\mathcal{S}}}}(\mathbb{F}_{7}):\langle \mathcal{X}_{\mathcal{G}_{\mathcal{S}}}(\mathbb{F}_{7}) \rangle]>1$. Further, $911a_{i}+\langle \mathcal{X}_{\mathcal{G}_{\mathcal{S}}}(\mathbb{F}_{7}) \rangle$ has order $2$ or $4$ in $$\frac{J_{\mathcal{X}_{\mathcal{G}_{\mathcal{S}}}}(\mathbb{F}_{7})}{\langle \mathcal{X}_{\mathcal{G}_{\mathcal{S}}}(\mathbb{F}_{7}) \rangle}$$ for each $i=1,2,3$, and so these elements generate a subgroup of order $8$. 

From Theorem \ref{Theorem: main result of [34]}, there exists an \'{e}tale cover $\mathcal{Z}^{(8)}_{\mathcal{X}_{\mathcal{G}_{\mathcal{S}}}}$ (resp. $\mathcal{Z}^{(4)}_{\mathcal{X}_{\mathcal{G}_{\mathcal{S}}}}$) of $\mathcal{X}_{\mathcal{G}_{\mathcal{S}}}$ with genus $161=8\cdot(g(\mathcal{\mathcal{X}_{\mathcal{G}_{\mathcal{S}}}})-1)+1$ (resp. $81=4\cdot(g(\mathcal{\mathcal{X}_{\mathcal{G}_{\mathcal{S}}}})-1)+1$) and at least $400=8\cdot\text{N}_{7}(\mathcal{\mathcal{X}_{\mathcal{G}_{\mathcal{S}}}})$ (resp. $200=4\cdot\text{N}_{7}(\mathcal{\mathcal{X}_{\mathcal{G}_{\mathcal{S}}}})$) rational points. We point out here that from Oesterl\'{e}'s bound, a curve of genus $161$ (resp. $81$) defined over $\mathbb{F}_{7}$ has at most $410$ (resp. $226$) $\mathbb{F}_{7}$-rational points. 

Moreover, also from Theorem \ref{Theorem: main result of [34]}, there exists an \'{e}tale cover $\mathcal{Z}^{(2)}_{\mathcal{X}_{\mathcal{G}_{\mathcal{S}}}}$ of $\mathcal{X}_{\mathcal{G}_{\mathcal{S}}}$ with genus $41=2\cdot(g(\mathcal{\mathcal{X}_{\mathcal{G}_{\mathcal{S}}}})-1)+1$ and at least $100=2\cdot\text{N}_{7}(\mathcal{\mathcal{X}_{\mathcal{G}_{\mathcal{S}}}})$ rational points, which shows that
\begin{equation*}
\text{N}_{41}(7)\coloneqq\max\Bigg\{\text{N}_{7}(\mathcal{Y})\,\,:\,\,\mathcal{Y} \text{ is a curve of genus }41\text{ defined over }\mathbb{F}_{7}\Bigg\}\geqslant 100
\end{equation*}
and gives a new entry in \cite{manYPoints}.
\end{proof}
\end{example}

Deciding whether or not the set of $\mathbb{F}_{q}$-rational points on a curve $\mathcal{Y}$ defined over $\mathbb{F}_{q}$ generates the group $J_{\mathcal{Y}}(\mathbb{F}_{q})$ is not an easy task. However, in some cases this can be done using information on the number of $\mathbb{F}_{q}$-rational points on $\mathcal{Y}$ and on its L-polynomial. For instance, the following result can be found in \cite{V}. 

\begin{theorem}[Voloch]
\label{Theorem: corollary of the main result of [34]}
Let $\mathcal{Y}$ be a curve defined over $\mathbb{F}_{q}$ of genus $g(\mathcal{Y})\geqslant 2$, and assume that $\mathcal{Y}$ has an $\mathbb{F}_{q}$-rational point. If $\emph{N}_{q}(\mathcal{Y})=\emph{N}_{q^{n}}(\mathcal{Y})$ and $L_{\mathcal{Y}/\mathbb{F}_{q}}(1)<L_{\mathcal{Y}/\mathbb{F}_{q^{n}}}(1)$, then, for each divisor $i$ of $L_{\mathcal{Y}/\mathbb{F}_{q^{n}}}(1)/L_{\mathcal{Y}/\mathbb{F}_{q}}(1)$, there exists an \'{e}tale cover $\mathcal{Z}$ of $\mathcal{Y}$ of degree $i$ and genus $g(\mathcal{Z})=i\cdot(g(\mathcal{Y})-1)+1$, which satisfies $\emph{N}_{q^{n}}(\mathcal{Z})\geqslant i\cdot\emph{N}_{q^{n}}(\mathcal{Y})$. 
\end{theorem}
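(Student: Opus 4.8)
The plan is to deduce this statement as a corollary of Theorem \ref{Theorem: main result of [34]} applied to $\mathcal{Y}$ regarded as a curve over the larger field $\mathbb{F}_{q^{n}}$. The bridge between the two statements is the standard identity $L_{\mathcal{Y}/\mathbb{F}_{q^{k}}}(1)=\#J_{\mathcal{Y}}(\mathbb{F}_{q^{k}})$, valid for every $k\geqslant 1$ (the value of the L-polynomial at $T=1$ computes the order of the group of rational points on the Jacobian; see \cite[Sections $9.7$ and $9.8$]{HKT}). In particular $L_{\mathcal{Y}/\mathbb{F}_{q}}(1)=\#J_{\mathcal{Y}}(\mathbb{F}_{q})$ and $L_{\mathcal{Y}/\mathbb{F}_{q^{n}}}(1)=\#J_{\mathcal{Y}}(\mathbb{F}_{q^{n}})$, so the two hypotheses translate into purely group-theoretic statements about the Jacobian.

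First I would exploit the equality $\text{N}_{q}(\mathcal{Y})=\text{N}_{q^{n}}(\mathcal{Y})$. Since $\mathcal{Y}(\mathbb{F}_{q})\subseteq\mathcal{Y}(\mathbb{F}_{q^{n}})$ always holds and these two finite sets have the same cardinality, they coincide as sets, whence the subgroups they generate in $J_{\mathcal{Y}}$ coincide: $\langle\mathcal{Y}(\mathbb{F}_{q^{n}})\rangle=\langle\mathcal{Y}(\mathbb{F}_{q})\rangle$. On the other hand, the hypothesis $L_{\mathcal{Y}/\mathbb{F}_{q}}(1)<L_{\mathcal{Y}/\mathbb{F}_{q^{n}}}(1)$ gives $\#J_{\mathcal{Y}}(\mathbb{F}_{q})<\#J_{\mathcal{Y}}(\mathbb{F}_{q^{n}})$, hence $J_{\mathcal{Y}}(\mathbb{F}_{q})\varsubsetneq J_{\mathcal{Y}}(\mathbb{F}_{q^{n}})$; combining this with the preceding identity yields $\langle\mathcal{Y}(\mathbb{F}_{q^{n}})\rangle\varsubsetneq J_{\mathcal{Y}}(\mathbb{F}_{q^{n}})$, which is exactly the nonequality needed to invoke Theorem \ref{Theorem: main result of [34]} over $\mathbb{F}_{q^{n}}$.

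Next I would compute the relevant index. Using the chain of subgroups $\langle\mathcal{Y}(\mathbb{F}_{q^{n}})\rangle=\langle\mathcal{Y}(\mathbb{F}_{q})\rangle\subseteq J_{\mathcal{Y}}(\mathbb{F}_{q})\subseteq J_{\mathcal{Y}}(\mathbb{F}_{q^{n}})$ and multiplicativity of indices in finite abelian groups, one gets
\[
[J_{\mathcal{Y}}(\mathbb{F}_{q^{n}}):\langle\mathcal{Y}(\mathbb{F}_{q^{n}})\rangle]=\frac{L_{\mathcal{Y}/\mathbb{F}_{q^{n}}}(1)}{L_{\mathcal{Y}/\mathbb{F}_{q}}(1)}\cdot[J_{\mathcal{Y}}(\mathbb{F}_{q}):\langle\mathcal{Y}(\mathbb{F}_{q})\rangle].
\]
Thus $L_{\mathcal{Y}/\mathbb{F}_{q^{n}}}(1)/L_{\mathcal{Y}/\mathbb{F}_{q}}(1)$ divides $[J_{\mathcal{Y}}(\mathbb{F}_{q^{n}}):\langle\mathcal{Y}(\mathbb{F}_{q^{n}})\rangle]$, so every divisor $i$ of the former is also a divisor of the latter. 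I would then regard $\mathcal{Y}$ as a curve over $\mathbb{F}_{q^{n}}$ (its genus $g(\mathcal{Y})\geqslant 2$, its rational point $P_{0}$, and the embedding $P_{0}\mapsto 0$ are all unaffected by the base change) and apply Theorem \ref{Theorem: main result of [34]} with $\mathbb{F}_{q}$ replaced by $\mathbb{F}_{q^{n}}$: for the chosen divisor $i$ it produces an \'{e}tale cover $\mathcal{Z}$ of $\mathcal{Y}$ of degree $i$ and genus $g(\mathcal{Z})=i\cdot(g(\mathcal{Y})-1)+1$ with $\text{N}_{q^{n}}(\mathcal{Z})\geqslant i\cdot\text{N}_{q^{n}}(\mathcal{Y})$, which is precisely the assertion.

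Since the statement is essentially a corollary, there is no serious analytic obstacle; the one step deserving care is the translation of $\text{N}_{q}(\mathcal{Y})=\text{N}_{q^{n}}(\mathcal{Y})$ into $\langle\mathcal{Y}(\mathbb{F}_{q^{n}})\rangle=\langle\mathcal{Y}(\mathbb{F}_{q})\rangle$. This is the conceptual heart of the argument: it forces the subgroup generated by the curve's points to stay inside $J_{\mathcal{Y}}(\mathbb{F}_{q})$ even after extension, so that the entire ``new'' part $J_{\mathcal{Y}}(\mathbb{F}_{q^{n}})/J_{\mathcal{Y}}(\mathbb{F}_{q})$—of order exactly $L_{\mathcal{Y}/\mathbb{F}_{q^{n}}}(1)/L_{\mathcal{Y}/\mathbb{F}_{q}}(1)$—is unreachable by rational points of $\mathcal{Y}$ and hence contributes in full to the index appearing in Theorem \ref{Theorem: main result of [34]}.
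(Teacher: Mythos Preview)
The paper does not supply a proof of this theorem; it is quoted from \cite{V} and stated without argument. That said, the internal label the authors chose (``corollary of the main result of [34]'') makes clear that the intended derivation is exactly the one you give: apply Theorem~\ref{Theorem: main result of [34]} to $\mathcal{Y}$ over $\mathbb{F}_{q^{n}}$ after noting that $\mathrm{N}_{q}(\mathcal{Y})=\mathrm{N}_{q^{n}}(\mathcal{Y})$ forces $\langle\mathcal{Y}(\mathbb{F}_{q^{n}})\rangle=\langle\mathcal{Y}(\mathbb{F}_{q})\rangle\subseteq J_{\mathcal{Y}}(\mathbb{F}_{q})$, so that $L_{\mathcal{Y}/\mathbb{F}_{q^{n}}}(1)/L_{\mathcal{Y}/\mathbb{F}_{q}}(1)=[J_{\mathcal{Y}}(\mathbb{F}_{q^{n}}):J_{\mathcal{Y}}(\mathbb{F}_{q})]$ divides $[J_{\mathcal{Y}}(\mathbb{F}_{q^{n}}):\langle\mathcal{Y}(\mathbb{F}_{q^{n}})\rangle]$. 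Your argument is correct and is precisely this standard deduction.
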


As a consequence of the previous result, the following occurs.

\begin{corollary}
\label{Corollary: a way to decide when the rational points generate the class group}
For each divisor $i$ of $L_{\mathcal{X}_{\mathcal{G}_{\mathcal{S}}}/\mathbb{F}_{q^{2}}}(1)/L_{\mathcal{X}_{\mathcal{G}_{\mathcal{S}}}/\mathbb{F}_{q}}(1)$, there exists an \'{e}tale cover of $\mathcal{X}_{\mathcal{G}_{\mathcal{S}}}$ of degree $i$ with at least $i(q^{2}+1)$ rational points over $\mathbb{F}_{q^{2}}$ and genus $i(g-1)+1$. Moreover, if $p>3$ and $p\equiv -1\pmod{3}$, then for each divisor $i$ of $L_{\mathcal{X}_{\mathcal{G}_{\mathcal{S}}}/\mathbb{F}_{q^{3}}}(1)/L_{\mathcal{X}_{\mathcal{G}_{\mathcal{S}}}/\mathbb{F}_{q}}(1)$, there exists an \'{e}tale cover of $\mathcal{X}_{\mathcal{G}_{\mathcal{S}}}$ of degree $i$ with at least $i(q^{2}+1)$ rational points over $\mathbb{F}_{q^{3}}$ and genus $i(g-1)+1$.
\end{corollary}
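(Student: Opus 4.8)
The plan is to apply Voloch's theorem (Theorem \ref{Theorem: corollary of the main result of [34]}) with $n=2$ and $n=3$, so the main work is verifying its two hypotheses: that $\text{N}_{q}(\mathcal{X}_{\mathcal{G}_{\mathcal{S}}})=\text{N}_{q^{n}}(\mathcal{X}_{\mathcal{G}_{\mathcal{S}}})$, and that $L_{\mathcal{X}_{\mathcal{G}_{\mathcal{S}}}/\mathbb{F}_{q}}(1)<L_{\mathcal{X}_{\mathcal{G}_{\mathcal{S}}}/\mathbb{F}_{q^{n}}}(1)$. The point counts come directly from Theorem \ref{Theorem: the number of Fqn rational points on XGS}: since $m=2t-1$ is odd and $q=p^{m}$, we have $p\nmid 1$, so $\text{N}_{q}(\mathcal{X}_{\mathcal{G}_{\mathcal{S}}})=q+1+\upeta(1)\cdot 2gq^{1/2}p^{-1/2}$; meanwhile for $n=2$ we are in case~2b (with $n$ even, $p\nmid n$ since $p$ is odd), giving $\text{N}_{q^{2}}(\mathcal{X}_{\mathcal{G}_{\mathcal{S}}})=q^{2}+1$. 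A quick check that the genus-$g$ contribution to $\text{N}_{q}$ actually makes $\text{N}_{q}=q^{2}+1$ as well — here one uses $2gq^{1/2}p^{-1/2}=q_{0}(q-1)q^{1/2}p^{-1/2}=p^{t-1}(q-1)q^{1/2}=q(q-1)$ since $p^{t-1}q^{1/2}=p^{t-1}p^{(2t-1)/2}=p^{(2t-1)/2+t-1}$... more cleanly, $p^{t-1}\cdot p^{t-1/2}\cdot... $ — in any case the identity $\text{N}_{q}(\mathcal{X}_{\mathcal{G}_{\mathcal{S}}})=q^{2}+1$ must be confirmed so that both equal the same value; this follows because $2g\,q^{1/2}p^{-1/2}=q_{0}(q-1)p^{(m-1)/2}=q(q-1)$ and $\upeta(1)=1$. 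For $n=3$ with $p\equiv-1\pmod 3$, we have $3\nmid m$ is not automatic, but $p\nmid 3$ holds as $p>3$; applying case~2a with $n=3$ odd gives $\text{N}_{q^{3}}(\mathcal{X}_{\mathcal{G}_{\mathcal{S}}})=q^{3}+1+\upeta((-1)^{1}\cdot 3)\cdot 2gq^{3/2}p^{-1/2}$, and one needs this to equal $q^{3}+1$, i.e.\ the coefficient to vanish — but it does not vanish, so instead one should observe that $q^{3}$ is a cube power and reconsider: actually the relevant equality $\text{N}_{q^{3}}=\text{N}_{q}$ must be extracted from the explicit formula, and the condition $p\equiv -1\pmod 3$ is exactly what forces the sign/magnitude to coincide after substituting $q=p^{2t-1}$.

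The second hypothesis, $L_{\mathcal{X}_{\mathcal{G}_{\mathcal{S}}}/\mathbb{F}_{q}}(1)<L_{\mathcal{X}_{\mathcal{G}_{\mathcal{S}}}/\mathbb{F}_{q^{n}}}(1)$, is evaluated using the explicit formulas in Theorem \ref{Theorem: the L-polynomial of XGS}. For $n=2$: $L_{\mathcal{X}_{\mathcal{G}_{\mathcal{S}}}/\mathbb{F}_{q^{2}}}(T)=\bigl(q T-\upeta(-1)\bigr)^{2g}$ by case~1b (since $2t-1$... wait, $n=2$ has $p\nmid n$, so case~2b applies): $L_{\mathcal{X}_{\mathcal{G}_{\mathcal{S}}}/\mathbb{F}_{q^{2}}}(T)=\bigl(q^{p}T^{p}-\upeta(-1)\bigr)^{2g/p}$, so $L_{\mathcal{X}_{\mathcal{G}_{\mathcal{S}}}/\mathbb{F}_{q^{2}}}(1)=(q^{p}-\upeta(-1))^{2g/p}$. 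The base value $L_{\mathcal{X}_{\mathcal{G}_{\mathcal{S}}}/\mathbb{F}_{q}}(1)$ is read off from \eqref{Equation: the L-polynomial over Fq} by plugging $T=1$: it equals $\bigl(p^{p-1}(q-\upeta(-1))\mathcal{M}_{p}(p^{t-1})^{2}\bigr)^{g/p}$, and one recognizes $p^{(p-1)/2}\mathcal{M}_{p}(p^{t-1})$ as $\prod_{\sigma}(p^{t-1/2}-\sigma(-\zeta_p/\tilde p^{1/2}))$ up to normalization — concretely, $L_{\mathcal{X}_{\mathcal{G}_{\mathcal{S}}}/\mathbb{F}_{q}}(1)=\prod_i(1-\omega_i)$ where the $\omega_i$ are the Weil numbers, and by the factorization of $M_{\tilde{\mathcal{X}}_1/\mathbb{F}_q}$ derived in the proof of Theorem~\ref{Theorem: the L-polynomial of XGS} this is $(q-\upeta(-1))^{g/p}\cdot\prod_{i=1}^{p-1}(1+\cdots)$-type expression. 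The cleanest route is: compare the two quantities directly via the identity $L_{\mathcal{Y}/\mathbb{F}_{q^{n}}}(1)=\prod_i(1-\omega_i^n)=\prod_{d\mid n}\prod_i \Phi_d(\omega_i)$-style, or more simply note $L_{\mathcal{Y}/\mathbb{F}_{q^{n}}}(1)/L_{\mathcal{Y}/\mathbb{F}_{q}}(1)=\prod_{i}(1-\omega_i^n)/(1-\omega_i)=\prod_i(1+\omega_i+\cdots+\omega_i^{n-1})$, which is a positive integer whenever the $\omega_i$ are conjugate algebraic integers closed under Galois action, and it is strictly greater than $1$ because the $\omega_i$ are not all $n$-th roots of unity times trivial factors. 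So the strict inequality reduces to showing this ratio exceeds $1$, equivalently that $\sum_i\omega_i^n\neq(n-1)(\text{something})$; since $|\omega_i|=q^{1/2}>1$ the product $\prod_i(1+\omega_i+\cdots+\omega_i^{n-1})$ has absolute value growing like $q^{g(n-1)}$, far exceeding $1$.

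Once both hypotheses are in place, Theorem~\ref{Theorem: corollary of the main result of [34]} applies verbatim: for each divisor $i$ of $L_{\mathcal{X}_{\mathcal{G}_{\mathcal{S}}}/\mathbb{F}_{q^{n}}}(1)/L_{\mathcal{X}_{\mathcal{G}_{\mathcal{S}}}/\mathbb{F}_{q}}(1)$ there is an \'etale cover $\mathcal{Z}$ of degree $i$, genus $i(g-1)+1$, with $\text{N}_{q^{n}}(\mathcal{Z})\geqslant i\cdot\text{N}_{q^{n}}(\mathcal{X}_{\mathcal{G}_{\mathcal{S}}})=i(q^{n}+1)$ — wait, here the statement asserts $\geqslant i(q^2+1)$ even in the $n=3$ case, which is weaker than $i(q^3+1)$, so nothing is lost; the $q^2+1$ in the corollary for $n=3$ is presumably a typo for $q^3+1$, or a deliberately conservative bound. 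I would state it with $q^n+1$ and remark on the discrepancy. The main obstacle I anticipate is the bookkeeping in the $n=3$, $p\equiv-1\pmod 3$ case: one must carefully track how $3\mid(p+1)$ interacts with the quadratic-character coefficient $\upeta((-1)^{(n-1)/2}n)=\upeta(-3)=\upeta(-1)\upeta(3)$ and with the substitution $q=p^{2t-1}$ to confirm that the point count over $\mathbb{F}_{q^3}$ collapses to match $\text{N}_q$ — this is the one step where the hypothesis $p\equiv-1\pmod 3$ is genuinely used and where a sign error would break the argument.
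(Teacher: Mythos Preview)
Your overall plan matches the paper's: verify the two hypotheses of Voloch's Theorem~\ref{Theorem: corollary of the main result of [34]} using Theorem~\ref{Theorem: the number of Fqn rational points on XGS} for the point counts and an L-polynomial comparison for the strict inequality, then apply Voloch. The paper's own proof is just one line citing Theorem~\ref{Theorem: the number of Fqn rational points on XGS}, Theorem~\ref{Theorem: corollary of the main result of [34]}, and Lemma~\ref{Lemma: other properties on the L-polynomial}. However, your execution has two genuine gaps.

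\medskip
\textbf{The $n=3$ case and the alleged typo.} You correctly compute $\mathrm{N}_{q}(\mathcal{X}_{\mathcal{G}_{\mathcal{S}}})=q^{2}+1$ (since $2g\,q^{1/2}p^{-1/2}=q_{0}(q-1)p^{t-1}=q(q-1)$), but then you lose the thread for $n=3$. The computation is entirely parallel: $2g\,q^{3/2}p^{-1/2}=q^{2}(q-1)$, so
\[
\mathrm{N}_{q^{3}}(\mathcal{X}_{\mathcal{G}_{\mathcal{S}}})=q^{3}+1+\upeta(-3)\,(q^{3}-q^{2}).
\]
The classical fact (via quadratic reciprocity, or directly from the Gauss sum) is that $\bigl(\tfrac{-3}{p}\bigr)=1$ if and only if $p\equiv 1\pmod 3$. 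Thus the hypothesis $p\equiv -1\pmod 3$ gives $\upeta(-3)=-1$, whence $\mathrm{N}_{q^{3}}(\mathcal{X}_{\mathcal{G}_{\mathcal{S}}})=q^{2}+1=\mathrm{N}_{q}(\mathcal{X}_{\mathcal{G}_{\mathcal{S}}})$. This is exactly what Voloch's theorem needs, and it also shows that the bound $i(q^{2}+1)$ in the corollary is \emph{not} a typo: it is $i\cdot\mathrm{N}_{q^{3}}(\mathcal{X}_{\mathcal{G}_{\mathcal{S}}})$, and $\mathrm{N}_{q^{3}}(\mathcal{X}_{\mathcal{G}_{\mathcal{S}}})$ really equals $q^{2}+1$ here. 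Your speculation that the statement should read $q^{3}+1$ is wrong and stems from not finishing this calculation.

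\medskip
\textbf{The L-polynomial inequality.} Your argument that $L_{\mathcal{X}_{\mathcal{G}_{\mathcal{S}}}/\mathbb{F}_{q^{n}}}(1)/L_{\mathcal{X}_{\mathcal{G}_{\mathcal{S}}}/\mathbb{F}_{q}}(1)$ is a positive integer is fine (it is $[J(\mathbb{F}_{q^{n}}):J(\mathbb{F}_{q})]$), but ``the product has absolute value growing like $q^{g(n-1)}$'' does not prove it exceeds $1$: you need an actual lower bound, not an asymptotic heuristic. The paper handles this via Lemma~\ref{Lemma: other properties on the L-polynomial}, which uses the explicit factorizations in Theorem~\ref{Theorem: the L-polynomial of XGS} to establish $L_{\mathcal{X}_{\mathcal{G}_{\mathcal{S}}}/\mathbb{F}_{q}}(1)<L_{\mathcal{X}_{\mathcal{G}_{\mathcal{S}}}/\mathbb{F}_{q^{n}}}(1)$ for every $n\geqslant 2$. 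If you want to avoid that lemma, a clean direct route is to evaluate both sides from Theorem~\ref{Theorem: the L-polynomial of XGS} (e.g.\ for $n=2$, $L_{\mathcal{X}_{\mathcal{G}_{\mathcal{S}}}/\mathbb{F}_{q^{2}}}(1)=(q^{p}-\upeta(-1))^{2g/p}$) and compare; the hand-waving with $|\omega_{i}|$ is not a proof.
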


To prove Corollary \ref{Corollary: a way to decide when the rational points generate the class group}, the following lemma, whose proof is straightforward, is used.

\begin{lemma}
\label{Lemma: other properties on the L-polynomial}
Consider the notation as in Section \ref{Section: Proof of Theorem 2}. Then, $$p^{p-1}\mathcal{M}_{p}(T)\mathcal{M}_{p}(-T)=\Phi_{p}(\upeta(-1)\cdot pT^{2}),$$ and $\mathcal{M}_{p}(T)$ corresponds to the irreducible factor of $\Phi_{p}(\upeta(-1)\cdot pT^{2})$ with positive linear coefficient. In particular,
\begin{equation*}
L_{\mathcal{X}_{\mathcal{G}_{\mathcal{S}}}/\mathbb{F}_{q}}(1)<L_{\mathcal{X}_{\mathcal{G}_{\mathcal{S}}}/\mathbb{F}_{q^{n}}}(1)
\end{equation*}
for each positive integer $n$.
\end{lemma}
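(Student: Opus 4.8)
The plan is to read off the polynomial identity from the explicit product form of $\mathcal{M}_{p}$ already produced in the proof of Theorem~\ref{Theorem: the L-polynomial of XGS}, to distinguish the correct factor by a short computation of its linear coefficient resting on the classical sign of the quadratic Gauss sum, and to obtain the numerical inequality from the Weil bounds together with one explicit check. Concretely, the computation in the proof of Theorem~\ref{Theorem: the L-polynomial of XGS} gives $\mathcal{M}_{p}(T)=\prod_{i=1}^{p-1}\bigl(T+\upeta(i)\zeta_{p}^{i}/\tilde{p}^{1/2}\bigr)$; since $p-1$ is even, $\mathcal{M}_{p}(-T)=\prod_{i=1}^{p-1}\bigl(T-\upeta(i)\zeta_{p}^{i}/\tilde{p}^{1/2}\bigr)$, so $\mathcal{M}_{p}(T)\mathcal{M}_{p}(-T)=\prod_{i=1}^{p-1}\bigl(T^{2}-\zeta_{p}^{2i}/(\tilde{p}^{1/2})^{2}\bigr)$. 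As $i$ runs over $\{1,\dots,p-1\}$ so does $2i\bmod p$, whence $\{\zeta_{p}^{2i}\}=\mu_{p}$; using $(\tilde{p}^{1/2})^{2}=\upeta(-1)p$ and $\upeta(-1)^{\,p-1}=1$ this product equals $p^{-(p-1)}\prod_{\zeta\in\mu_{p}}(\upeta(-1)pT^{2}-\zeta)=p^{-(p-1)}\Phi_{p}(\upeta(-1)pT^{2})$, which is the first identity. It also exhibits $p^{p-1}\mathcal{M}_{p}(T)\mathcal{M}_{p}(-T)$ as the factorization of $\Phi_{p}(\upeta(-1)pT^{2})$ into its two monic $\mathbb{Q}$-irreducible factors, $\mathcal{M}_{p}(T)$ being the minimal polynomial of $-\zeta_{p}/\tilde{p}^{1/2}$ and $\mathcal{M}_{p}(-T)$ that of $\zeta_{p}/\tilde{p}^{1/2}$.

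Since those two factors are interchanged by $T\mapsto -T$, identifying $\mathcal{M}_{p}$ as the one with positive linear coefficient amounts to computing the coefficient of $T$ in $\mathcal{M}_{p}(T)$ and checking it is positive (in particular nonzero). The plan for this is to write that coefficient as $\mathcal{M}_{p}(0)\sum_{i=1}^{p-1}\tilde{p}^{1/2}/\bigl(\upeta(i)\zeta_{p}^{i}\bigr)$, reindex $i\mapsto -i$, and invoke the sign determination of the quadratic Gauss sum, $\sum_{i=1}^{p-1}\upeta(i)\zeta_{p}^{i}=\tilde{p}^{1/2}$, which collapses the sum and leaves $p\,\mathcal{M}_{p}(0)$; Wilson's theorem, $\prod_{i=1}^{p-1}\zeta_{p}^{i}=1$, and $(\tilde{p}^{1/2})^{2}=\upeta(-1)p$ then give $\mathcal{M}_{p}(0)=p^{-(p-1)/2}$, so the coefficient of $T$ in $\mathcal{M}_{p}(T)$ equals $p^{(3-p)/2}>0$ and that of $\mathcal{M}_{p}(-T)$ equals $-p^{(3-p)/2}<0$. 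I expect the Gauss sum sign to be the only non-formal ingredient here.

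For the inequality, let $\omega_{1},\dots,\omega_{2g}$ be the reciprocal roots of $L_{\mathcal{X}_{\mathcal{G}_{\mathcal{S}}}/\mathbb{F}_{q}}(T)$, so $|\omega_{i}|=q^{1/2}$ and $L_{\mathcal{X}_{\mathcal{G}_{\mathcal{S}}}/\mathbb{F}_{q^{n}}}(1)=\prod_{i=1}^{2g}(1-\omega_{i}^{\,n})=\#J_{\mathcal{X}_{\mathcal{G}_{\mathcal{S}}}}(\mathbb{F}_{q^{n}})$ is a positive integer, hence equal to $\prod_{i=1}^{2g}|1-\omega_{i}^{\,n}|$. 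From $q^{n/2}-1\le|1-\omega_{i}^{\,n}|\le q^{n/2}+1$ one gets $(q^{n/2}-1)^{2g}\le L_{\mathcal{X}_{\mathcal{G}_{\mathcal{S}}}/\mathbb{F}_{q^{n}}}(1)\le(q^{n/2}+1)^{2g}$, so in particular $L_{\mathcal{X}_{\mathcal{G}_{\mathcal{S}}}/\mathbb{F}_{q}}(1)\le(q^{1/2}+1)^{2g}$ and, since $2g\ge 4$, the strict inequality $L_{\mathcal{X}_{\mathcal{G}_{\mathcal{S}}}/\mathbb{F}_{q}}(1)<L_{\mathcal{X}_{\mathcal{G}_{\mathcal{S}}}/\mathbb{F}_{q^{n}}}(1)$ follows whenever $q^{n/2}>q^{1/2}+2$. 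This holds for every $n\ge 3$, and for $n=2$ whenever $q\ge 5$, so as $q$ is an odd prime power the only remaining case is $q=3$, $n=2$ (that is $p=3$, $m=t=1$, $g=3$). There I would simply evaluate the formulas of Theorem~\ref{Theorem: the L-polynomial of XGS}: with $\upeta(-1)=-1$ and $\mathcal{M}_{3}(T)=T^{2}+T+\tfrac13$ (the minimal polynomial of $-\zeta_{3}/\tilde{3}^{1/2}$, also read off from the identity above) one finds $L_{\mathcal{X}_{\mathcal{G}_{\mathcal{S}}}/\mathbb{F}_{3}}(1)=3^{2}\cdot 4\cdot(7/3)^{2}=196<784=(3^{3}+1)^{2}=L_{\mathcal{X}_{\mathcal{G}_{\mathcal{S}}}/\mathbb{F}_{9}}(1)$, so the inequality holds in that case too; for $n=1$ both sides coincide, so the assertion is to be understood for $n\ge 2$. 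The hard point in this last part is only that the Weil estimate falls just short for the smallest field $\mathbb{F}_{3}$, forcing that single explicit evaluation.
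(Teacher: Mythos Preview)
Your argument is correct and supplies full details for what the paper declares ``straightforward'' without further proof; the polynomial identity and the Gauss-sum computation of the linear coefficient are exactly the expected route, and the Weil-bound argument for the inequality, supplemented by the single explicit evaluation at $q=3$, $n=2$, is clean. Your observation that the strict inequality fails at $n=1$ is well taken: the statement should be read for $n\ge 2$, which is all that is used in Corollary~\ref{Corollary: a way to decide when the rational points generate the class group}. One cosmetic point: the identity $\prod_{i=1}^{p-1}\zeta_{p}^{i}=1$ comes from $1+2+\cdots+(p-1)\equiv 0\pmod p$ rather than Wilson's theorem, though the latter can indeed be used (via Euler's criterion) to obtain $\prod_{i=1}^{p-1}\upeta(i)=\upeta(-1)$, which is the other ingredient in your evaluation of $\mathcal{M}_{p}(0)$.
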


\begin{proof}[Proof of Corollary \ref{Corollary: a way to decide when the rational points generate the class group}]
From Theorem \ref{Theorem: the number of Fqn rational points on XGS}, $\text{N}_{q}(\mathcal{\mathcal{X}_{\mathcal{G}_{\mathcal{S}}}})=\text{N}_{q^{2}}(\mathcal{\mathcal{X}_{\mathcal{G}_{\mathcal{S}}}})$. Further, if $p>3$ and $p\equiv -1\pmod{3}$, then $\text{N}_{q}(\mathcal{\mathcal{X}_{\mathcal{G}_{\mathcal{S}}}})=\text{N}_{q^{3}}(\mathcal{\mathcal{X}_{\mathcal{G}_{\mathcal{S}}}})$. Therefore, the result follows from Theorem \ref{Theorem: corollary of the main result of [34]} and Lemma \ref{Lemma: other properties on the L-polynomial}.
\end{proof}

\begin{example}
To apply Corolary \ref{Corollary: a way to decide when the rational points generate the class group}, an explicit description of the well-known equality 
$$\dfrac{L_{\mathcal{X}_{\mathcal{G}_{\mathcal{S}}}/\mathbb{F}_{q^{2}}}(1)}{L_{\mathcal{X}_{\mathcal{G}_{\mathcal{S}}}/\mathbb{F}_{q}}(1)}=L_{\mathcal{X}_{\mathcal{G}_{\mathcal{S}}}/\mathbb{F}_{q}}(-1)$$ is desirable. Based on Theorem \ref{Theorem: the L-polynomial of XGS}, the following examples illustrate the cases $p=5,7$.\\

\begin{itemize}
\item[$\pmb{p=5.}$] Here, 
\begin{equation*}
5^{2}\cdot\mathcal{M}_{5}(T)=25T^{4}+25T^{3}+15T^{2}+5T+1,
\end{equation*}
where $\mathcal{M}_{5}(T)$ is the minimal polynomial of $-\zeta_{5}/5^{1/2}$ over $\mathbb{Q}$. Therefore, 
\begin{equation*}
5^{2}\cdot\mathcal{M}_{5}(5^{t-1}T)=q^{2}T^{4}+qq_{0}T^{3}+3qT^{2}+q_{0}T+1,
\end{equation*}
which gives that
\begin{equation*}
L_{\mathcal{X}_{\mathcal{G}_{\mathcal{S}}}/\mathbb{F}_{q}}(T)=\bigg((qT^{2}-1)\cdot(q^{2}T^{4}+qq_{0}T^{3}+3qT^{2}+q_{0}T+1)^{2}\bigg)^{\frac{g}{5}}.
\end{equation*}
In particular,
$$
L_{\mathcal{X}_{\mathcal{G}_{S}} / \mathbb{F}_{q}}(1)=\left(q -1\right)^{\frac{g}{5}} \cdot\left(q^{2} +q q_{0} +3 q +q_{0} +1\right)^{\frac{2g}{5}}
$$
and
$$L_{\mathcal{X}_{\mathcal{G}_{\mathcal{S}}}/\mathbb{F}_{q}}(-1)=\left(q -1\right)^{\frac{g}{5}}
\left(q^{2} -q q_{0} +3 q -q_{0} +1\right)^{\frac{2g}{5}}.
$$
\item[$\pmb{p=7.}$] Analogous to the previous case, 
\begin{equation*}
L_{\mathcal{X}_{\mathcal{G}_{\mathcal{S}}}/\mathbb{F}_{q}}(T)=\bigg((qT^{2}+1)\cdot(q^{3}T^{6}+q^{2}q_{0}T^{5}+3q^{2}T^{4}+qq_{0}T^{3}+3qT^{2}+q_{0}T+1)^{2}\bigg)^{\frac{g}{7}}.
\end{equation*}
In particular,
$$
L_{\mathcal{X}_{\mathcal{G}_{S}} / \mathbb{F}_{q}}(1)=\left(q +1\right)^{\frac{g}{7}} \cdot(q^{3}+q^{2}q_{0}+3q^{2}+qq_{0}+3q+q_{0}+1)^{\frac{2g}{7}}
$$
and
$$L_{\mathcal{X}_{\mathcal{G}_{\mathcal{S}}}/\mathbb{F}_{q}}(-1)=\left(q +1\right)^{\frac{g}{7}} \cdot(q^{3}-q^{2}q_{0}+3q^{2}-qq_{0}+3q-q_{0}+1)^{\frac{2g}{7}}.$$
\end{itemize}
\end{example}

\bibliographystyle{amsplain}

\end{document}